\newtheorem{theorem}{Theorem}[section]
\newtheorem{proposition}[theorem]{Proposition}
\newtheorem{lemma}[theorem]{Lemma}
\newtheorem{corollary}[theorem]{Corollary}
\theoremstyle{definition}
\newtheorem{definition}[theorem]{Definition}
\newtheorem{example}[theorem]{Example}
\newtheorem{remark}[theorem]{Remark}
\newcommand{\range}[2]{\{#1,\ldots,#2\}}
\newcommand{\oriented}[2]{(#1,#2)}
\newcommand{\sage}{\textsc{SageMath}}
\newcommand{\FlexRiLoG}{\textsc{FlexRiLoG}}
\newcommand{\eqwithreference}[1]{\underset{\text{\scriptsize\mbox{#1}}}{=}}
\newcommand{\blue}{\text{blue}}
\newcommand{\red}{\text{red}}
\newcommand{\gold}{\text{gold}}
\newcommand{\conj}[1]{\overline{#1}}
\newcommand{\pairsGold}[1]{\operatorname{gp}_G(#1)}
\newcommand{\clGold}[1]{\operatorname{cl}_\gold(#1)}
\newcommand{\Wfun}[2]{W_{#1,#2}}
\newcommand{\Zfun}[2]{Z_{#1,#2}}
\newcommand{\motion}{\mathcal{M}}
\newcommand{\RSconfigSpace}{\mathcal{V}_s}
\newcommand{\rot}[1]{\Theta_{#1}}
\newcommand{\rott}{\rot{t}}
\newcommand{\rotmt}{\rot{-t}}
\newcommand{\rots}{\rot{s}}
\newcommand{\rotms}{\rot{-s}}
\newcommand{\tw}{\tilde{w}}
\newcommand{\tdelta}{\tilde{\delta}}
\newcommand{\NN}{\mathbb{N}}
\newcommand{\RR}{\mathbb{R}}
\newcommand{\CC}{\mathbb{C}}
\newcommand{\QQ}{\mathbb{Q}}
\newcommand{\ci}{i}
\newcommand{\Trel}{\triangle}
\newcommand{\Prel}{\square}
\definecolor{colR}{HTML}{CC6677}
\definecolor{colB}{HTML}{6699CC}
\colorlet{colG}{Gold!90!black}
\colorlet{colGr}{DarkSeaGreen}
\colorlet{colbg}{white}
\colorlet{colfg}{black}
\colorlet{colgraphv}{colfg!75!colbg}
\colorlet{colgraphe}{colfg!65!colbg}
\colorlet{colline}{colfg!50!colbg}
\colorlet{ecol}{black!50!white}
\tikzstyle{sym}=[ecol,dashed]
\tikzstyle{gvertex}=[circle, draw=colbg, fill=colgraphv, inner sep=0pt, minimum size=4pt]
\tikzstyle{gvertexh}=[rectangle, draw=colbg, fill=colgraphv, inner sep=0pt, minimum size=4pt]
\tikzstyle{fvertex}=[circle,inner sep=0pt,minimum size=2.5pt,fill=colbg!80!colfg,draw=colgraphv,line width=1pt,outer sep=1pt]
\tikzstyle{fvertexh}=[rectangle,inner sep=0pt,minimum size=2.5pt,fill=colbg!80!colfg,draw=colgraphv,line width=1pt,outer sep=1pt]
\tikzstyle{edge}=[line width=1.5pt,colgraphe]
\tikzstyle{dedge}=[edge,-{Latex[width=3.45pt,length=5pt]}] 
\tikzstyle{redge}=[edge,colR]
\tikzstyle{bedge}=[edge,colB]
\tikzstyle{gedge}=[edge,colG]
\tikzstyle{hedge2}=[line width=2.5pt]
\tikzstyle{ledge}=[edge,opacity=0.25]
\tikzstyle{axes}=[colline,-latex]
\tikzstyle{gridl}=[colgraphe,line width=0.5pt]
\tikzstyle{gridl2}=[colgraphe,line width=0.1pt]
\tikzstyle{fvertexs}=[fvertex,minimum size=0.18pt,line width=0.05pt,outer sep=0.05pt]
\tikzstyle{labelsty}=[font=\scriptsize]
\tikzstyle{genericgraph}=[dashed,black!70!white]
\tikzstyle{indicatededge}=[pin={[pin distance=6pt,pin edge={thin,path fading=#1}]20:},pin={[pin distance=6pt,pin edge={thin,path fading=#1}]-20:},pin={[pin distance=10pt,pin edge={thin,path fading=#1}]2:}]
\tikzstyle{path}=[dotted,line width=1pt,-{Latex[width=2pt,length=3pt]}]
\crefname{enumi}{}{}
\Crefname{enumi}{Item}{Items}
\title{Constructing reflection-symmetric flexible realisations of graphs}
\author{Sean Dewar\thanks{School of Mathematics, University of Bristol, Fry Building,
Woodland Road,
Bristol,
BS8 1UG, UK, \texttt{sean.dewar@bristol.ac.uk}} \and
	Georg Grasegger\thanks{Johann Radon Institute for Computational and Applied Mathematics (RICAM),
	Austrian Academy of Sciences, Altenberger Straße 69, 4040 Linz, Austria, \texttt{georg.grasegger@ricam.oeaw.ac.at}} \and
	Jan Legersk\'y\thanks{Czech Technical University in Prague,
	Faculty of Information Technology, Thákurova 2700/9, 160 00 Prague 6, Czech Republic,
	\texttt{jan.legersky@fit.cvut.cz}, corresponding author}}
\date{}
\begin{document}
\maketitle
\begin{abstract}
We study reflection-symmetric realisations of symmetric graphs in the plane
that allow a continuous symmetry and edge-length preserving deformation.
To do so, we identify a necessary combinatorial condition on graphs with reflection-symmetric flexible realisations.
This condition is based on a specific type of edge colouring, where edges are assigned one of three colours in a symmetric way.
From some of these colourings we also construct concrete reflection-symmetric realisations with their corresponding symmetry preserving motion.
We study also a specific class of reflection-symmetric realisations consisting of triangles and parallelograms.

\vspace{20pt}
\noindent
\textbf{Keywords:} flexible framework, rigidity, reflection symmetry, edge coloring

\vspace{5pt}
\noindent
\textbf{MSC2020:} 52C25, 70B15, 05C70
\end{abstract}

\section{Introduction}

A \emph{realisation} of a graph $G=(V_G,E_G)$ is the placement of the vertices in the plane, i.e.\ a map $p: V_G\rightarrow \mathbb R^2$, so that no two vertices joined by an edge are placed at the same point.
We define a realisation to be \emph{rigid} if any continuous edge-length preserving deformation corresponds to a rigid body motion.
However, graphs with rigid realisations may also have realisations that can be continuously deformed into non-congruent realisations while the edge lengths are preserved.
For some graphs these so-called \emph{flexible realisations} are difficult to find,
and their study has a long history.
For instance, Dixon \cite{Dixon} studied flexible realisations of the complete bipartite graph $K_{3,3}$.

Symmetric properties of graphs and realisation in relation to rigidity properties have been widely studied.
See for instance \cite{Bernstein,FowlerGuest,KaszanitzkySchulze,Schulze2010,SchulzeWhiteley},
or \cite{SchulzeCRC,SchulzeWhiteleyCRC} for an overview.
In this paper we consider reflection symmetry in particular.
Rigidity under this particular assumption has been previously investigated
in \cite{SchulzeTanigawa15,SchulzeWhiteley} and recently in~\cite{LaPortaSchulze}.
Flexibility and motions of symmetric graphs have also previously been
considered in \cite{KitsonSchulze,OwenSchulze,RossSchulzeWhiteley}.
However, each of them either use different symmetries, a different metric, or different classes of graphs than we do here.

Recently the existence of flexible realisations in the plane has been analysed
more deeply~\cite{flexibleLabelings,movableGraphs}.
The main idea therein is to relate the flexibility to a special type of edge
colouring by two colours, so-called \emph{NAC-colourings} (``No Almost Cycles'', see
\cite{flexibleLabelings}).
NAC-colourings characterise the existence of a flexible realisation in the plane
and give a construction of one such realisation with its flexible motion.
The \sage{} package \FlexRiLoG{}~\cite{flexrilog,FlexRiLoGPaper} offers functionality for
finding such colourings and constructing their corresponding motions.
The method of NAC-colourings has since been used for specific classes of graphs,
such as periodic \cite{Dperiodic2019} and infinite graphs, including Penrose patterns~\cite{DLinfinite}.
Also, the study of rigid and flexible frameworks consisting of parallelograms \cite{Bracing} and additional triangles \cite{TPframe} was recently developed using properties of NAC-colourings.
In \cite{SoCGMedia,RotFlex}, these methods were extended for constructing flexible realisations for rotation-symmetric graphs.

The aim of this paper is to analyse reflection-symmetric realisations of symmetric graphs.
In the spirit of NAC-colourings, we define a new type of edge colouring that allows us 
to give some necessary and sufficient conditions on graphs having a reflection-symmetric flex.
These new colourings are tailored for reflection symmetry but are based on NAC-colourings (which are recalled therefore in \Cref{sec:nac}).
In \Cref{sec:sym}, we recall notions from symmetry and define our edge-colouring.
We obtain a necessary condition for graphs having reflection-symmetric motions, which is found in \Cref{sec:necessary}.
We also construct reflection-symmetric flexes for some of the graphs satisfying the necessary condition in \Cref{sec:sufficient}.
\Cref{sec:ptp} extends the results for frameworks consisting of parallelograms and triangles to the reflection-symmetric case.
In \Cref{sec:conclusion}, we conclude that reflection-symmetry causes some crucial differences
to the general or rotation-symmetric case.

\section{Frameworks and NAC-colourings}\label{sec:nac}

A \emph{(planar) framework} is a pair $(G,p)$ consisting of a (finite and simple) graph $G=(V_G,E_G)$ and a map (known here as a \emph{realisation of $G$}) $p:V_G \rightarrow \mathbb{R}^2$, where $p(u) \neq p(v)$ if $uv$ is an edge of $G$.
Throughout the paper, we assume that all graphs are connected.

A \emph{flex} of a framework $(G,p)$ is any family of realisations $(p_t)_{t \in [0,1]}$ of $G$ where:
(i) for each vertex $v \in V_G$, the map $t \mapsto p_t(v)$ is continuous with $p_0(v) = p(v)$, and
(ii) for each edge $uv \in E_G$,
the map $t \mapsto \|p_t(u) - p_t(v)\|$ is constant.
A flex is said to be \emph{trivial} if it is the restriction of a rigid body motion to the points $\{p(v) : v \in V_G\}$.

A basic concept of recent analysis of flexible realisations are so called NAC-colourings.
They have been defined in \cite{flexibleLabelings} as a combinatorial property of graphs.
Since then they have been used to describe flexibility properties for various classes of 
graphs \cite{Dperiodic2019,RotFlex,DLinfinite,NAC-NP,movableGraphs,Bracing,TPframe}.

\begin{definition}
	Let $G$ be a graph.
	A \emph{NAC-colouring} of $G$ is a colouring\footnote{%
			Here an edge colouring is simply a map from the set of edges to a set of colours,
			not to be confused with the type of edge colouring used in the context of edge chromatic numbers.}
		of the edges $\delta\colon  E_G\rightarrow \{\text{\blue{}, \red{}}\}$
	that is surjective and for every cycle in $G$,
	either all edges have the same colour, or
	there are at least two edges of each colour (see \Cref{fig:nac}).
	
	\begin{figure}[ht]
		\centering
		\begin{tikzpicture}
			\begin{scope}
				\node[gvertex] (1) at (0,0) {};
				\node[gvertex] (2) at (1,0) {};
				\node[gvertex] (3) at (1,1) {};
				\node[gvertex] (4) at (0,1) {};
				\draw[edge,colR] (1)--(2) (3)--(4);
				\draw[edge,colB] (2)--(3) (4)--(1);
				\node[colGr] at (0.5,-0.5) {\faCheck};
			\end{scope}
			\begin{scope}[xshift=2cm]
				\node[gvertex] (1) at (0,0) {};
				\node[gvertex] (2) at (1,0) {};
				\node[gvertex] (3) at (1,1) {};
				\node[gvertex] (4) at (0,1) {};
				\draw[edge,colR] (1)--(2) (2)--(3);
				\draw[edge,colB] (3)--(4) (4)--(1);
				\node[colGr] at (0.5,-0.5) {\faCheck};
			\end{scope}
			\begin{scope}[xshift=5cm]
				\node[gvertex] (1) at (0,0) {};
				\node[gvertex] (2) at (1,0) {};
				\node[gvertex] (3) at (1,1) {};
				\node[gvertex] (4) at (0,1) {};
				\draw[edge,colR] (1)--(2) (2)--(3) (3)--(4) (4)--(1);
				\node[colR] at (0.5,-0.5) {\faTimes};
			\end{scope}
			\begin{scope}[xshift=7cm]
				\node[gvertex] (1) at (0,0) {};
				\node[gvertex] (2) at (1,0) {};
				\node[gvertex] (3) at (1,1) {};
				\node[gvertex] (4) at (0,1) {};
				\draw[edge,colR] (1)--(2) (2)--(3) (3)--(4);
				\draw[edge,colB] (4)--(1);
				\node[colR] at (0.5,-0.5) {\faTimes};
			\end{scope}
		\end{tikzpicture}
		\caption{Edge colourings of $C_4$ that are and are not NAC-colourings.}
		\label{fig:nac}
	\end{figure}
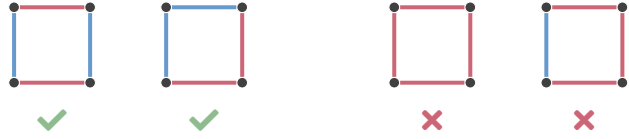
\end{definition}

These NAC-colourings combinatorially classify graphs
for which a flexible framework can be found:
a graph has a flexible framework if and only if it has a NAC-colouring~\cite{flexibleLabelings}.

\section{Reflection-symmetry in graphs, frameworks and edge colourings}\label{sec:sym}

An automorphism $\sigma$ of a graph $G$ is said to be a \emph{reflection} if it is distinct from the identity automorphism $I$ (in that $\sigma(v) \neq v$ for some vertex $v$) and $\sigma^2 = I$.
If a graph has a reflection~$\sigma$ then it is said to be \emph{reflection-symmetric (with reflection $\sigma$)}.
A vertex $v \in V_G$ of a reflection-symmetric graph is \emph{invariant} if $\sigma v = v$;
similarly, an edge $u v \in E_G$ is \emph{invariant} if either $\sigma u = u$ and $\sigma v = v$,
or if $\sigma u = v$ and $\sigma v = u$.

\begin{example}\label{ex:symg}
	The following figure shows two graphs with exactly three reflections apiece.
	\begin{center}
	\begin{tikzpicture}
		\node[gvertex,label={[labelsty]90:1}] (a1) at (90:1) {};
		\node[gvertex,label={[labelsty]210:2}] (a2) at (210:1) {};
		\node[gvertex,label={[labelsty]-30:3}] (a3) at (-30:1) {};
		\draw[edge] (a1)--(a2) (a2)--(a3) (a3)--(a1);
		\coordinate (m) at ($(a2)!0.5!(a3)$);
		\node[align=left] at ($(2.5,0)+1/2*(a1)+1/2*(m)$) {\color{black!20!white}$\sigma_1(1)=1$\\$\sigma_1(2)=3$\\$\sigma_1(3)=2$};
		\node[align=left] at ($(4.5,0)+1/2*(a1)+1/2*(m)$) {$\sigma_2(1)=3$\\\color{black!20!white}$\sigma_2(2)=2$\\$\sigma_2(3)=1$};
		\node[align=left] at ($(6.5,0)+1/2*(a1)+1/2*(m)$) {$\sigma_3(1)=2$\\$\sigma_3(2)=1$\\\color{black!20!white}$\sigma_3(3)=3$};
	\end{tikzpicture}

	\begin{tikzpicture}
		\node[gvertex,label={[labelsty]90:1}] (a1) at (90:1) {};
		\node[gvertex,label={[labelsty]210:2}] (a2) at (210:1) {};
		\node[gvertex,label={[labelsty]-30:3}] (a3) at (-30:1) {};
		\node[gvertex,label={[labelsty]60:4}] (a4) at ($(a1)+(60:1)$) {};
		\node[gvertex,label={[labelsty]120:5}] (a5) at ($(a1)+(120:1)$) {};
		\draw[edge] (a1)--(a2) (a2)--(a3) (a3)--(a1) (a1)--(a4) (a1)--(a5);
		\coordinate (m1) at ($(a2)!0.5!(a3)$);
		\coordinate (m2) at ($(a4)!0.5!(a5)$);
		\node[align=left] at ($(2.5,0)+1/2*(m1)+1/2*(m2)$) {\color{black!20!white}$\sigma_1(1)=1$\\$\sigma_1(2)=3$\\$\sigma_1(3)=2$\\\color{black!20!white}$\sigma_1(4)=4$\\\color{black!20!white}$\sigma_1(5)=5$};
		\node[align=left] at ($(4.5,0)+1/2*(m1)+1/2*(m2)$) {\color{black!20!white}$\sigma_2(1)=1$\\\color{black!20!white}$\sigma_2(2)=2$\\\color{black!20!white}$\sigma_2(3)=3$\\$\sigma_2(4)=5$\\$\sigma_2(5)=4$};
		\node[align=left] at ($(6.5,0)+1/2*(m1)+1/2*(m2)$) {\color{black!20!white}$\sigma_3(1)=1$\\$\sigma_3(2)=3$\\$\sigma_3(3)=2$\\$\sigma_3(4)=5$\\$\sigma_3(5)=4$};
	\end{tikzpicture}
	\end{center}
\end{example}

\subsection{Reflection-symmetric frameworks}

A \emph{planar reflection} is a non-trivial isometry of the plane that has a line of fixed points.
W.l.o.g.\ we assume this reflection is through the $y$-axis.
This planar reflection is represented throughout by the matrix
\begin{align*}
	\tau := 
	\begin{bmatrix}
		-1 & 0 \\
		0 & 1
	\end{bmatrix}.
\end{align*}
With this,
we are now ready to give a formal definition for reflection-symmetric frameworks.
\begin{definition}
	Let $G$ be a reflection-symmetric graph with reflection $\sigma$.
	A realisation $p:V_G \rightarrow \mathbb{R}^2$ of $G$, resp.\ a framework $(G,p)$,
	is said to be \emph{reflection-symmetric (with reflection~$\sigma$)}
	if $p(\sigma v) = \tau p(v)$ for every vertex $v \in V_G$, where $\tau$ is the matrix defined above.
\end{definition}

\begin{example}
	Given the graph symmetries from \Cref{ex:symg} we can construct symmetric frameworks.
	Vertices that are fixed by $\sigma$ need to be placed on the symmetry line, which may cause degenerate subgraphs.
	However, we do not allow overlapping adjacent vertices.
	\begin{center}
	\begin{tikzpicture}
		\draw[sym] (-90:1)node[below,labelsty] {$\sigma_1$}--(90:1.2) (-30:1.2)--(150:1)node[above left,labelsty] {$\sigma_3$} (30:1)node[above right,labelsty] {$\sigma_2$}--(210:1.2);
		\node[fvertex,label={[labelsty]90:1}] (a1) at (90:1) {};
		\node[fvertex,label={[labelsty]210:2}] (a2) at (210:1) {};
		\node[fvertex,label={[labelsty]-30:3}] (a3) at (-30:1) {};
		\draw[edge] (a1)--(a2) (a2)--(a3) (a3)--(a1);
	\end{tikzpicture}
	\qquad
	\begin{tikzpicture}
		\draw[sym] (-90:1)node[below,labelsty] {$\sigma_1$} --(90:2.5);
		\node[fvertex,label={[labelsty]0:1}] (a1) at (90:1) {};
		\node[fvertex,label={[labelsty]210:2}] (a2) at (210:1) {};
		\node[fvertex,label={[labelsty]-30:3}] (a3) at (-30:1) {};
		\node[fvertex,label={[labelsty]0:4}] (a4) at ($(a1)+(90:0.5)$) {};
		\node[fvertex,label={[labelsty]0:5}] (a5) at ($(a1)+(90:1)$) {};
		\draw[edge] (a1)--(a2) (a2)--(a3) (a3)--(a1) (a1)--(a4) (a1)--(a5);
	\end{tikzpicture}
	\qquad
	\begin{tikzpicture}
		\draw[sym] (-90:1) node[below,labelsty] {$\sigma_2$}--(90:2.5);
		\node[fvertex,label={[labelsty]0:1}] (a1) at (90:1) {};
		\node[fvertex,label={[labelsty]0:2}] (a2) at (-90:0) {};
		\node[fvertex,label={[labelsty]0:3}] (a3) at (-90:0.5) {};
		\node[fvertex,label={[labelsty]60:4}] (a4) at ($(a1)+(60:1)$) {};
		\node[fvertex,label={[labelsty]120:5}] (a5) at ($(a1)+(120:1)$) {};
		\draw[edge] (a1)--(a2) (a2)--(a3) (a3)--(a1) (a1)--(a4) (a1)--(a5);
	\end{tikzpicture}
	\qquad
	\begin{tikzpicture}
		\draw[sym] (-90:1) node[below,labelsty] {$\sigma_3$}--(90:2.5);
		\node[fvertex,label={[labelsty]0:1}] (a1) at (90:1) {};
		\node[fvertex,label={[labelsty]210:2}] (a2) at (210:1) {};
		\node[fvertex,label={[labelsty]-30:3}] (a3) at (-30:1) {};
		\node[fvertex,label={[labelsty]60:4}] (a4) at ($(a1)+(60:1)$) {};
		\node[fvertex,label={[labelsty]120:5}] (a5) at ($(a1)+(120:1)$) {};
		\draw[edge] (a1)--(a2) (a2)--(a3) (a3)--(a1) (a1)--(a4) (a1)--(a5);
	\end{tikzpicture}
	\end{center}
\end{example}

Let $(p_t)_{t \in [0,1]}$ be a flex of a reflection-symmetric framework $(G,p)$ with reflection $\sigma$.
We say $p_t$ is a \emph{reflection-symmetric flex} if for each $t \in [0,1]$, the framework $(G,p_t)$ is reflection-symmetric with reflection $\sigma$.

\begin{definition}
	A reflection-symmetric framework is \emph{reflection-symmetric rigid} if every reflection-symmetric flex of itself is trivial;
	otherwise, the framework is said to be \emph{reflection-symmetric flexible}.
\end{definition}

\subsection{RS-colourings}

We begin with defining a new type of edge colouring based on the idea
of NAC-colourings but tailored to the needs of the reflectional setting.

\begin{definition}
	\label{def:pseudoRScol}
	Let $G$ be a reflection-symmetric graph with reflection $\sigma$. 
	An edge colouring
	$\delta:E_G\rightarrow \{\red,\blue,\gold\}$ of $G$
	is a \emph{pseudo-RS-colouring} if the following holds:
	\begin{enumerate}
		\item\label{it:containsRedBlue} $ \{ \red{},\blue{} \}\subseteq \delta(E_G) \subseteq \{ \red{},\blue{}, \gold{} \}$,
		\item\label{it:gold2blue} changing \gold{} to \blue{} results in a NAC-colouring,
		\item\label{it:gold2red} changing \gold{} to \red{} results in a NAC-colouring,
		\item\label{it:symmetryRedBlue} $\delta(e) = \red{}$ if and only
			if $\delta(\sigma e) = \blue{}$ for all $e\in E_G$.
	\end{enumerate}
	A cycle in $G$ is \emph{almost red-blue} if it has exactly one gold edge. 
	Pseudo-RS-colourings $\delta,\conj{\delta}$ are called \emph{conjugated} if
	$\conj{\delta}=\delta \circ \sigma$.
\end{definition}

\Cref{fig:pseudoRScol} shows pseudo-RS-colourings (RS stands for reflection-symmetric)
of a reflection-symmetric graph.
 
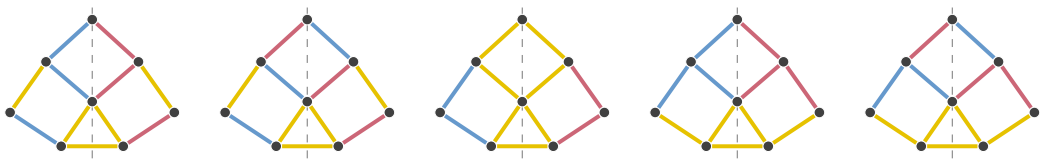
\begin{figure}[ht]
  \begin{center}
    \begin{tikzpicture}[scale=0.8]
			\draw[sym] (0,-0.2) -- (0,2.3);
			\node[gvertex] (1) at (-0.51, 0.) {};
			\node[gvertex] (2) at (0.51, 0.) {};
			\node[gvertex] (3) at (0., 0.74) {};
			\node[gvertex] (4) at (-1.35, 0.56) {};
			\node[gvertex] (5) at (-0.76, 1.4) {};
			\node[gvertex] (6) at (0., 2.1) {};
			\node[gvertex] (7) at (0.76, 1.4) {};
			\node[gvertex] (8) at (1.35, 0.56) {};
			\draw[gedge] (1)edge(2);
			\draw[gedge] (1)edge(3);
			\draw[bedge] (1)edge(4);
			\draw[gedge] (2)edge(3);
			\draw[redge] (2)edge(8);
			\draw[bedge] (3)edge(5);
			\draw[redge] (3)edge(7);
			\draw[gedge] (4)edge(5);
			\draw[bedge] (5)edge(6);
			\draw[redge] (6)edge(7);
			\draw[gedge] (7)edge(8);
		\end{tikzpicture}
		\quad
		\begin{tikzpicture}[scale=0.8]
			\draw[sym] (0,-0.2) -- (0,2.3);
			\node[gvertex] (1) at (-0.51, 0.) {};
			\node[gvertex] (2) at (0.51, 0.) {};
			\node[gvertex] (3) at (0., 0.74) {};
			\node[gvertex] (4) at (-1.35, 0.56) {};
			\node[gvertex] (5) at (-0.76, 1.4) {};
			\node[gvertex] (6) at (0., 2.1) {};
			\node[gvertex] (7) at (0.76, 1.4) {};
			\node[gvertex] (8) at (1.35, 0.56) {};
			\draw[gedge] (1)edge(2);
			\draw[gedge] (1)edge(3);
			\draw[bedge] (1)edge(4);
			\draw[gedge] (2)edge(3);
			\draw[redge] (2)edge(8);
			\draw[bedge] (3)edge(5);
			\draw[redge] (3)edge(7);
			\draw[gedge] (4)edge(5);
			\draw[redge] (5)edge(6);
			\draw[bedge] (6)edge(7);
			\draw[gedge] (7)edge(8);
		\end{tikzpicture}
		\quad
		\begin{tikzpicture}[scale=0.8]
			\draw[sym] (0,-0.2) -- (0,2.3);
			\node[gvertex] (1) at (-0.51, 0.) {};
			\node[gvertex] (2) at (0.51, 0.) {};
			\node[gvertex] (3) at (0., 0.74) {};
			\node[gvertex] (4) at (-1.35, 0.56) {};
			\node[gvertex] (5) at (-0.76, 1.4) {};
			\node[gvertex] (6) at (0., 2.1) {};
			\node[gvertex] (7) at (0.76, 1.4) {};
			\node[gvertex] (8) at (1.35, 0.56) {};
			\draw[gedge] (1)edge(2);
			\draw[gedge] (1)edge(3);
			\draw[bedge] (1)edge(4);
			\draw[gedge] (2)edge(3);
			\draw[redge] (2)edge(8);
			\draw[gedge] (3)edge(5);
			\draw[gedge] (3)edge(7);
			\draw[bedge] (4)edge(5);
			\draw[gedge] (5)edge(6);
			\draw[gedge] (6)edge(7);
			\draw[redge] (7)edge(8);
		\end{tikzpicture}
		\quad
		\begin{tikzpicture}[scale=0.8]
			\draw[sym] (0,-0.2) -- (0,2.3);
			\node[gvertex] (1) at (-0.51, 0.) {};
			\node[gvertex] (2) at (0.51, 0.) {};
			\node[gvertex] (3) at (0., 0.74) {};
			\node[gvertex] (4) at (-1.35, 0.56) {};
			\node[gvertex] (5) at (-0.76, 1.4) {};
			\node[gvertex] (6) at (0., 2.1) {};
			\node[gvertex] (7) at (0.76, 1.4) {};
			\node[gvertex] (8) at (1.35, 0.56) {};
			\draw[gedge] (1)edge(2);
			\draw[gedge] (1)edge(3);
			\draw[gedge] (1)edge(4);
			\draw[gedge] (2)edge(3);
			\draw[gedge] (2)edge(8);
			\draw[bedge] (3)edge(5);
			\draw[redge] (3)edge(7);
			\draw[bedge] (4)edge(5);
			\draw[bedge] (5)edge(6);
			\draw[redge] (6)edge(7);
			\draw[redge] (7)edge(8);
		\end{tikzpicture}
		\quad
		\begin{tikzpicture}[scale=0.8]
			\draw[sym] (0,-0.2) -- (0,2.3);
			\node[gvertex] (1) at (-0.51, 0.) {};
			\node[gvertex] (2) at (0.51, 0.) {};
			\node[gvertex] (3) at (0., 0.74) {};
			\node[gvertex] (4) at (-1.35, 0.56) {};
			\node[gvertex] (5) at (-0.76, 1.4) {};
			\node[gvertex] (6) at (0., 2.1) {};
			\node[gvertex] (7) at (0.76, 1.4) {};
			\node[gvertex] (8) at (1.35, 0.56) {};
			\draw[gedge] (1)edge(2);
			\draw[gedge] (1)edge(3);
			\draw[gedge] (1)edge(4);
			\draw[gedge] (2)edge(3);
			\draw[gedge] (2)edge(8);
			\draw[bedge] (3)edge(5);
			\draw[redge] (3)edge(7);
			\draw[bedge] (4)edge(5);
			\draw[redge] (5)edge(6);
			\draw[bedge] (6)edge(7);
			\draw[redge] (7)edge(8);
		\end{tikzpicture}
  \end{center}
  \caption{All pseudo-RS-colourings of a reflection-symmetric graph up to conjugates.}
  \label{fig:pseudoRScol}
\end{figure}

\begin{remark}
	For a pseudo-RS-colouring of graph $G$, it follows from \cref{it:symmetryRedBlue} that
	\cref{it:gold2blue} holds if and only if \cref{it:gold2red} holds.
	Furthermore, if $\delta(e) = \gold{}$ then $\delta(\sigma e) = \gold{}$ for all $e\in E_G$.
\end{remark}
Actually, we need a specific type of pseudo-RS-colourings.
\begin{definition}
	A pseudo-RS-colouring $\delta$ of graph $G$ is called an \emph{RS-colouring} if:
	\begin{enumerate}
		\item $(G,\delta)$ has no almost red-blue cycle, or
		\item for every almost red-blue cycle,
		there exist edges $e_1,e_2$ in the cycle and another pseudo-RS-colouring $\delta'$ of $G$ (called a \emph{certificate})
		such that $\delta(e_1)=\delta(e_2)$ and $\delta'(e_1)\neq\delta'(e_2)$.
	\end{enumerate}
\end{definition}
All pseudo-RS-colourings from \Cref{fig:pseudoRScol} contain no almost red-blue cycles and are therefore RS-colourings.
\Cref{fig:RScolourings} shows an example of an RS-colouring with an almost red-blue cycle
and the corresponding certificate pseudo-RS-colouring.
Not all pseudo-RS-colourings are RS-colourings, however; see \Cref{fig:nonRScolourings} for two such examples.

\begin{figure}[ht]
	\centering
	\begin{tikzpicture}
		\node[gvertex] (1) at (0.6, -0.8) {};
		\node[gvertex] (2) at (0.9, 0.3) {};
		\node[gvertex] (3) at (0., 1.8) {};
		\node[gvertex] (4) at (-0.9, 0.3) {};
		\node[gvertex] (5) at (-0.6, -0.8) {};
		
		\node[gvertex] (6) at (1.4, -0.5) {};
		\node[gvertex] (7) at (-1.4, -0.5) {};
		\node[gvertex] (8) at (0, -1.8) {};
		
		\node[gvertex] (1a) at (1.6, -1.5) {};
		\node[gvertex] (2a) at (1.9, -0.4) {};
		\node[gvertex] (6a) at (2.4, -1.2) {};

		\node[gvertex] (5a) at (-1.6, -1.5) {};
		\node[gvertex] (4a) at (-1.9, -0.4) {};
		\node[gvertex] (7a) at (-2.4, -1.2) {};
		
		\node[gvertex] (8b) at (1, -2.5) {};
		\node[gvertex] (5b) at (0.4, -1.5) {};
		\node[gvertex] (1b) at (1a) {};
		
		\node[gvertex] (8c) at (-1, -2.5) {};
		\node[gvertex] (1c) at (-0.4, -1.5) {};
		\node[gvertex] (5c) at (5a) {};

		\draw[gedge] (1)edge(5) (8)edge(5) (1)edge(8) 
			(1b)edge(5b) (8b)edge(5b) (1b)edge(8b) 
			(1c)edge(5c) (8c)edge(5c) (1c)edge(8c);
		\draw[bedge] (4)edge(3) (4a)edge(3)
			(2)edge(1) (6)edge(1) (2)edge(6)
			(2a)edge(1a) (6a)edge(1a) (2a)edge(6a)
			(4)edge(4a) (5)edge(5a) (7)edge(7a)
			(8)edge(8c) (1)edge(1c);
		\draw[redge]  (2)edge(3) (2a)edge(3)
			(5)edge(4) (7)edge(5) (4)edge(7)
			(1)edge(1a) (2)edge(2a) (6)edge(6a) 
			(5a)edge(4a) (7a)edge(5a) (4a)edge(7a)
			(8)edge(8b) (5)edge(5b);	
	\begin{scope}[xshift=6cm]
		\node[gvertex] (1) at (0.6, -0.8) {};
		\node[gvertex] (2) at (0.9, 0.3) {};
		\node[gvertex] (3) at (0., 1.8) {};
		\node[gvertex] (4) at (-0.9, 0.3) {};
		\node[gvertex] (5) at (-0.6, -0.8) {};
		
		\node[gvertex] (6) at (1.4, -0.5) {};
		\node[gvertex] (7) at (-1.4, -0.5) {};
		\node[gvertex] (8) at (0, -1.8) {};
		
		\node[gvertex] (1a) at (1.6, -1.5) {};
		\node[gvertex] (2a) at (1.9, -0.4) {};
		\node[gvertex] (6a) at (2.4, -1.2) {};

		\node[gvertex] (5a) at (-1.6, -1.5) {};
		\node[gvertex] (4a) at (-1.9, -0.4) {};
		\node[gvertex] (7a) at (-2.4, -1.2) {};
		
		\node[gvertex] (8b) at (1, -2.5) {};
		\node[gvertex] (5b) at (0.4, -1.5) {};
		\node[gvertex] (1b) at (1a) {};
		
		\node[gvertex] (8c) at (-1, -2.5) {};
		\node[gvertex] (1c) at (-0.4, -1.5) {};
		\node[gvertex] (5c) at (5a) {};
		
		\draw[gedge] (1)edge(5) (8)edge(5) (1)edge(8) 
			(1b)edge(5b) (8b)edge(5b) (1b)edge(8b) 
			(1c)edge(5c) (8c)edge(5c) (1c)edge(8c);
		\draw[bedge] 
			(2)edge(1) (6)edge(1) (2)edge(6)
			(2a)edge(1a) (6a)edge(1a) (2a)edge(6a)
			
			(2)edge(3) (2a)edge(3) (2)edge(2a) (1)edge(1a) (8)edge(8b) (5)edge(5b)(6)edge(6a) 
			;
		\draw[redge]  
			(5)edge(4) (7)edge(5) (4)edge(7)
			(5a)edge(4a) (7a)edge(5a) (4a)edge(7a)
			
			(4)edge(4a) (4)edge(3) (4a)edge(3)
			(8)edge(8c) (5)edge(5a) (7)edge(7a) (1)edge(1c)
			;
	\end{scope}
	\end{tikzpicture}
	\caption{Two RS-colourings of the same graph which are certificates for each other.}
	\label{fig:RScolourings}
\end{figure}
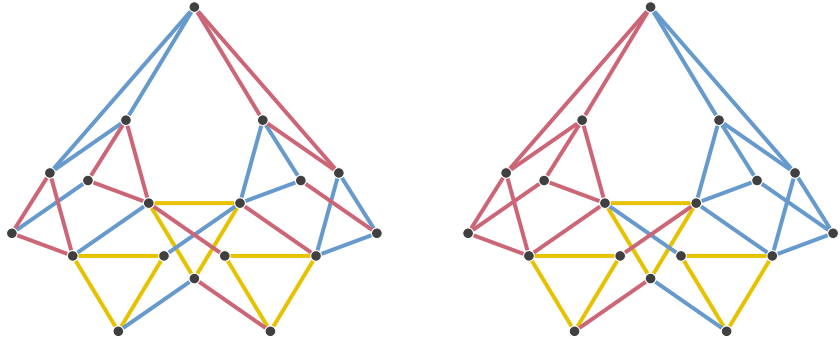

\begin{figure}[ht]
	\centering
	\begin{tikzpicture}
	  \node[gvertex] (1) at (0.587785, -0.809017) {};
		\node[gvertex] (2) at (0.951057, 0.309017) {};
		\node[gvertex] (3) at (0., 1.) {};
		\node[gvertex] (4) at (-0.951057, 0.309017) {};
		\node[gvertex] (5) at (-0.587785, -0.809017) {};
		
		\node[gvertex] (6) at (0.2, 0.125) {};
		\node[gvertex] (7) at (-0.2, 0.125) {};
		\draw[gedge] (1)edge(5);
		\draw[bedge] (2)edge(3) (2)edge(1) (1)edge(6) (2)edge(6) (3)edge(6);
		\draw[redge] (4)edge(3) (5)edge(4) (4)edge(7) (5)edge(7) (3)edge(7);
	\end{tikzpicture}
	\qquad
	\begin{tikzpicture}
		  	\node[gvertex] (a) at (0.587785, -0.809017) {};
			\node[gvertex] (b) at (-0.587785, -0.809017) {};
			\draw[gedge] (a)edge(b);
		\begin{scope}[rotate around={60:(b)}]
		  	\node[gvertex] (1) at (0.587785, -0.809017) {};
			\node[gvertex] (2) at (0.951057, 0.309017) {};
			\node[gvertex] (3) at (0., 1.) {};
			\node[gvertex] (4) at (-0.951057, 0.309017) {};
			\node[gvertex] (5) at (-0.587785, -0.809017) {};
			
			\node[gvertex] (6) at (0.2, 0.125) {};
			\node[gvertex] (7) at (-0.2, 0.125) {};
			\draw[gedge] (1)edge(5);
			\draw[bedge] (2)edge(3) (2)edge(1) (1)edge(6) (2)edge(6) (3)edge(6);
			\draw[redge] (4)edge(3) (5)edge(4) (4)edge(7) (5)edge(7) (3)edge(7);
		\end{scope}
		\begin{scope}[rotate around={-60:(a)}]
		  	\node[gvertex] (1) at (0.587785, -0.809017) {};
			\node[gvertex] (2) at (0.951057, 0.309017) {};
			\node[gvertex] (3) at (0., 1.) {};
			\node[gvertex] (4) at (-0.951057, 0.309017) {};
			\node[gvertex] (5) at (-0.587785, -0.809017) {};
			
			\node[gvertex] (6) at (0.2, 0.125) {};
			\node[gvertex] (7) at (-0.2, 0.125) {};
			\draw[gedge] (1)edge(5);
			\draw[bedge] (2)edge(3) (2)edge(1) (1)edge(6) (2)edge(6) (3)edge(6);
			\draw[redge] (4)edge(3) (5)edge(4) (4)edge(7) (5)edge(7) (3)edge(7);
		\end{scope}
	\end{tikzpicture}
	\caption{Examples of pseudo-RS-colourings that are not RS-colourings.
	Every reflection-symmetric realisation of the left graph is reflection-symmetric rigid.
	The right graph, however, admits both an RS-colouring and a reflection-symmetric flexible realisation.}
	\label{fig:nonRScolourings}
\end{figure}
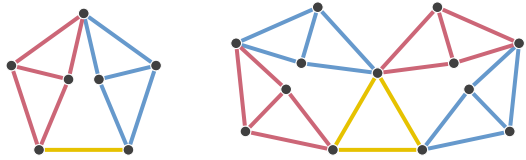

Finding a NAC-colouring is NP-complete \cite{NAC-NP},
which is true also for pseudo-RS-colourings.

\begin{proposition}
	It is NP-complete to decide whether a reflection-symmetric graph has a pseudo-RS-colouring
	and it is NP-hard to decide whether a reflection-symmetric graph has an RS-colouring.
\end{proposition}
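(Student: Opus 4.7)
The plan is to address each claim separately. NP-membership of pseudo-RS-colouring is routine: given a candidate 3-colouring $\delta$, the four conditions of \Cref{def:pseudoRScol} are polynomial-time checkable, since \cref{it:containsRedBlue,it:symmetryRedBlue} are immediate and \cref{it:gold2blue,it:gold2red} reduce to verifying a NAC-colouring, which is polynomial (as used in the literature cited after \cite{NAC-NP}).

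For NP-hardness of pseudo-RS-colouring, I would reduce from the NAC-colouring existence problem, shown NP-complete in \cite{NAC-NP}. The plan is to build, from a graph $G$, a reflection-symmetric graph $\tilde{G}$ by taking two disjoint copies $G_1, G_2 \cong G$ and attaching them through a small \emph{symmetry gadget}, so that $\tilde G$ has a pseudo-RS-colouring if and only if $G$ has a NAC-colouring. In the forward direction, a NAC-colouring $\delta_G$ of $G$ extends to $\tilde{G}$ by applying $\delta_G$ on $G_1$, its conjugate on $G_2$, and \gold{} on invariant edges of the gadget; the required NAC-after-replacement properties follow because the invariant gadget is a bridge-like structure that does not introduce problematic cycles. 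In the backward direction, the gadget is designed so that any pseudo-RS-colouring of $\tilde G$ restricts to a NAC-colouring on each copy. A natural candidate gadget consists of a pair of invariant ``rungs'' $u\bar u$ and $v\bar v$ for an adjacent pair $uv \in E_G$: the resulting $4$-cycle $u v \bar v \bar u$ then forces a non-monochromatic colouring on each copy, ruling out the trivial colouring (all of $G_1$ red, all of $G_2$ blue) which would otherwise spoil the reduction.

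For NP-hardness of RS-colouring, I would additionally verify that every pseudo-RS-colouring of the constructed $\tilde G$ contains no almost red-blue cycle, so that every pseudo-RS-colouring of $\tilde G$ is automatically an RS-colouring, reducing the RS-problem to the pseudo-RS problem just handled. This can be ensured by duplicating the gadget or placing all forced gold edges inside a common invariant subgraph, so that any cycle meeting a gold edge meets at least two.

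The hard part will be the backward direction of the pseudo-RS reduction: the gadget must simultaneously rule out all ``trivial'' monochromatic colourings of the two copies (to guarantee that the 2-colouring obtained by replacing \gold{} with \blue{} on $G_1$ is surjective, hence a NAC-colouring of $G$), while also not forcing so many edges to be gold that no pseudo-RS-colouring can exist at all. Balancing the number of invariant edges and the cycles they create against the surjectivity requirement onto $\{\red,\blue\}$ is the main technical obstacle, and will likely involve a short case analysis of cycles of length at most the diameter of the gadget.
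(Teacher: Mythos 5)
Your high-level strategy matches the paper's: reduce from NAC-colouring existence, build a reflection-symmetric graph out of two copies of $G$ joined by an invariant connector that is forced to be gold, and argue both directions. However, the concrete colouring you give in the forward direction does not work, and this is where the real content of the proof lies. If you colour $G_1$ by $\delta_G$, colour $G_2$ by the conjugate (red and blue swapped), and make the two rungs $u\bar u$, $v\bar v$ gold, then the $4$-cycle $u\,v\,\bar v\,\bar u$ has colours $c,\gold,\conj{c},\gold$ with $\{c,\conj c\}=\{\red,\blue\}$; after replacing gold by blue it has exactly one red edge, and after replacing gold by red it has exactly one blue edge. So conditions \cref{it:gold2blue} and \cref{it:gold2red} of \Cref{def:pseudoRScol} both fail, for either choice of $c$. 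Your remark that the gadget ``does not introduce problematic cycles'' is exactly where the argument breaks: the very cycle you added to rule out the trivial all-red/all-blue colouring is the one that destroys the naive conjugate colouring. This tension (which you flag at the end as a difficulty for the backward direction) actually sinks the forward direction, and it cannot be resolved by tuning the gadget alone, since every invariant edge is forced gold by \cref{it:symmetryRedBlue}.

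The paper's resolution is not a different gadget but a different recolouring of the copies: it glues the two copies along a single edge $f$ (so cycles crossing between copies must pass through both endpoints of $f$), sets $f$ gold, and then recolours \emph{asymmetrically} --- in $G$ it sends red to gold and keeps blue; in $\sigma G$ it sends red to gold and blue to red. The point is that after replacing gold by blue the copy $G$ becomes entirely monochromatic (and $\sigma G$ carries a colour-swapped version of the original NAC-colouring), and symmetrically after replacing gold by red the copy $\sigma G$ becomes monochromatic; this is what makes every mixed cycle harmless. Your backward direction is essentially recoverable (replace gold by red, observe the result restricts to a NAC-colouring or a monochromatic colouring on each copy, and use surjectivity plus the shared edge to rule out both copies being monochromatic), and your observation that RS-hardness follows because ${\rm RS}\subseteq{\rm pseudo\text{-}RS}$ is right --- though note you only need the single constructed colouring to have no almost red-blue cycle, not every pseudo-RS-colouring of the constructed graph, and the paper verifies this for its $\delta'$ by a short case analysis on where the unique gold edge of a putative almost red-blue cycle could sit. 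As written, your proposal is missing the one idea that makes the reduction go through.
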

\begin{proof}
	We proceed by reduction from the question whether a graph $G$ has a NAC-colouring.
	Let $H$ be obtained by gluing two copies of $G$ on an edge $f$
	so that $H$ is reflection-symmetric with a symmetry $\sigma$ that keeps only the $f$ fixed.
	Let us assume we have a NAC-colouring $\delta$ of $G$.
	Then we obtain a pseudo-RS-colouring $\delta'$ of $H$ in the following way.
	Let us assume that $\delta(f)=\red$ and
	denote the two copies of $G$ in $H$ by $G$ and~$\sigma G$.
	We now set $\delta'(f)=\gold$, and for each $e\in E_G$ and $\sigma e\in E_{\sigma G}$ we set
	(see also \Cref{fig:np})
	\begin{align*}
		\delta'(e)&=
		\begin{cases}
			\gold & \text{if } \delta(e)=\red,\\
			\blue & \text{if } \delta(e)=\blue,
		\end{cases}
		&
		\delta'(\sigma e)&=
		\begin{cases}
			\gold & \text{if } \delta(e)=\red,\\
			\red & \text{if } \delta(e)=\blue.
		\end{cases}
	\end{align*}
	In other words: in $G$ we switch red to gold and in $\sigma G$ we switch red to gold and blue to red.

	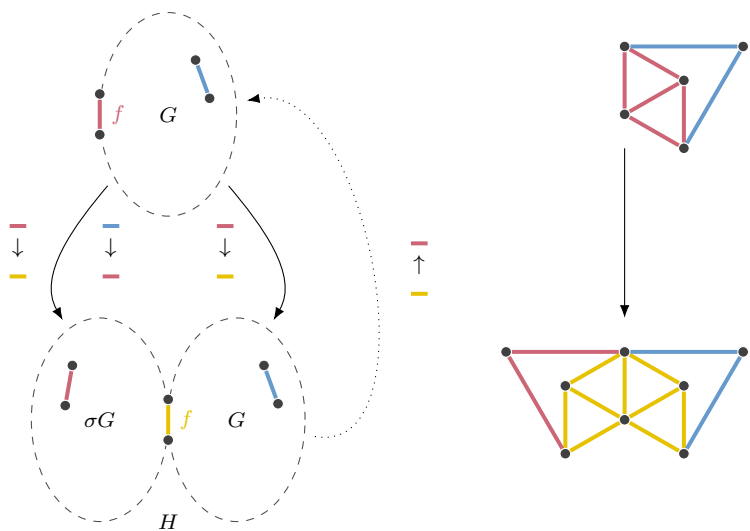
\begin{figure}[ht]
		\centering
		\begin{tikzpicture}[scale=0.9]
			\clip (-2.7,-6.5) rectangle (4,1.9);
			\begin{scope}
				\draw[genericgraph,name path=G] (0,0) circle[x radius=1cm,y radius=1.5cm];
				\path[name path=La] (0,0)--(-50:5);
				\path[name intersections={of=G and La,by={pa}}];
				\path[name path=Lb] (0,0)--(-130:5);
				\path[name intersections={of=G and Lb,by={pb}}];
				\path[name path=Lc] (0,0)--(10:5);
				\path[name intersections={of=G and Lc,by={pc}}];
				\node[labelsty] at (0,0) {$G$};
				\node[gvertex,indicatededge=east,rotate=0] (1) at (-1,0.3) {};
				\node[gvertex,indicatededge=east,rotate=0] (2) at (-1,-0.3) {};
				\draw[edge,colR] (1)to node[right,labelsty] {$f$} (2);
				\node[gvertex,indicatededge=west,rotate=180] (3) at (0.4,0.8) {};
				\node[gvertex,indicatededge=south,rotate=-90] (4) at ($(3)+(-70:0.6)$) {};
				\draw[edge,colB] (3)--(4);
			\end{scope}
			\begin{scope}[yshift=-4.5cm]
				\draw[genericgraph,name path=G1] (1,0) circle[x radius=1cm,y radius=1.5cm];
				\draw[genericgraph,name path=G2] (-1,0) circle[x radius=1cm,y radius=1.5cm];
				\path[name path=La2] (1,0)--(60:5);
				\path[name intersections={of=G1 and La2,by={pa2}}];
				\path[name path=Lb2] (-1,0)--(120:5);
				\path[name intersections={of=G2 and Lb2,by={pb2}}];
				\path[name path=Lc2] (1,0)--(-10:5);
				\path[name intersections={of=G1 and Lc2,by={pc2}}];
				\node[labelsty] at (1,0) {$G$};
				\node[labelsty] at (-1,0) {$\sigma G$};
				\node[labelsty] at (0,-1.5) {$H$};
				\node[gvertex,indicatededge=east,rotate=0] (1) at (0,0.3) {};
				\node[gvertex,indicatededge=east,rotate=0] (2) at (0,-0.3) {};
				\node[gvertex,indicatededge=west,rotate=180] at (1) {};
				\node[gvertex,indicatededge=west,rotate=180] at (2) {};
				\draw[edge,colG] (1)to node[right,labelsty] {$f$}(2);
				\node[gvertex,indicatededge=west,rotate=180] (3) at (1.4,0.8) {};
				\node[gvertex,indicatededge=south,rotate=-90] (4) at ($(3)+(-70:0.6)$) {};
				\node[gvertex,indicatededge=east,rotate=0] (3s) at (-1.4,0.8) {};
				\node[gvertex,indicatededge=south,rotate=-90] (4s) at ($(3s)+(-100:0.6)$) {};
				\draw[edge,colB] (3)--(4);
				\draw[edge,colR] (3s)--(4s);
			\end{scope}
			\begin{scope}[trans/.style={-Latex[],shorten <= 4pt,shorten >= 4pt}]
				\draw[trans] (pa) to[out=-50,in=60] node[left=0.2cm,labelsty,align=center] {\tikz{\draw[edge,colR,-] (0,0)--(0.5,0);}\\$\downarrow$\\\tikz{\draw[edge,colG,-] (0,0)--(0.5,0);}} (pa2);
				\draw[trans] (pb) to[out=-130,in=120] node[left=0.2cm,labelsty,align=center] {\tikz{\draw[edge,colR,-] (0,0)--(0.5,0);}\\$\downarrow$\\\tikz{\draw[edge,colG,-] (0,0)--(0.5,0);}} node[right=0.2cm,labelsty,align=center] {\tikz{\draw[edge,colB,-] (0,0)--(0.5,0);}\\$\downarrow$\\\tikz{\draw[edge,colR,-] (0,0)--(0.5,0);}} (pb2);
				\draw[trans,dotted] (pc2) to[out=-10,in=10] node[right=0.25cm,labelsty,align=center] {\tikz{\draw[edge,solid,colR,-] (0,0)--(0.5,0);}\\$\uparrow$\\\tikz{\draw[edge,solid,colG,-] (0,0)--(0.5,0);}} (pc);
			\end{scope}
		\end{tikzpicture}
		\quad
		\begin{tikzpicture}[scale=0.9]
			\clip (-2.1,-6.5) rectangle (2,1.9);
			\begin{scope}
				\node[gvertex] (a) at (90:1) {};
				\node[gvertex] (b) at (-30:1) {};
				\node[gvertex] (c) at (30:2) {};
				\node[gvertex] (e) at (30:1) {};
				\node[gvertex] (f) at (0,0) {};
				\draw[edge,colR] (a)edge(e) (a)edge(f) (b)edge(e) (b)edge(f) (e)edge(f);
				\draw[edge,colB] (a)edge(c) (b)edge(c);
			\end{scope}
			\begin{scope}[yshift=-4.5cm]
				\node[gvertex] (a) at (90:1) {};
				\node[gvertex] (b) at (-30:1) {};
				\node[gvertex] (c) at (30:2) {};
				\node[gvertex] (e) at (30:1) {};
				\node[gvertex] (f) at (0,0) {};
				\draw[edge,colG] (a)edge(e) (a)edge(f) (b)edge(e) (b)edge(f) (e)edge(f);
				\draw[edge,colB] (a)edge(c) (b)edge(c);

				\node[gvertex] (b2) at (210:1) {};
				\node[gvertex] (c2) at (150:2) {};
				\node[gvertex] (e2) at (150:1) {};

				\draw[edge,colG] (a)edge(e2) (b2)edge(e2) (b2)edge(f) (e2)edge(f);
				\draw[edge,colR] (a)edge(c2) (b2)edge(c2);
			\end{scope}
			\begin{scope}[trans/.style={-Latex[]}]
				\draw[trans] (0,-0.5)--(0,-3);
			\end{scope}
		\end{tikzpicture}

		\caption{The conversion of a NAC-colouring of $G$ to a pseudo-RS-colouring of $H$ and an example.}
		\label{fig:np}
	\end{figure}

	Since $\delta$ is a NAC-colouring of $G$, it contains both red and blue edges. If $\delta(e)=\blue$ then $\delta'(e)=\blue$ and $\delta'(\sigma e)=\red$, hence $\delta'$ contains all three colours (condition \cref{it:containsRedBlue}).
	When we change all gold edges in $\delta'$ to blue,
	then $G$ is monochromatic in blue and
	$\sigma G$ is coloured as $\delta$ with red and blue interchanged,
	hence it is a NAC-colouring (condition~\cref{it:gold2blue}).
	When we change all gold edges in $\delta'$ to red,
	then $\sigma G$ is monochromatic in red and
	$G$ is coloured the same as in~$\delta$,
	hence it is a NAC-colouring (condition \cref{it:gold2red}).
	The symmetry condition \cref{it:symmetryRedBlue} in $\delta'$ follows from the definition.
	Hence, $\delta'$ is a pseudo-RS-colouring.
	
	In fact, $\delta'$ is an RS-colouring also.
	Suppose that there is an almost red-blue cycle.
	W.l.o.g.\ we assume its gold edge is in $G$ and it contains a blue edge.
	If the gold edge is $f$, then the cycle does not contain any edge of $\sigma G$.
	Thus, it contains no red edges, contradicting $\delta'$ is a pseudo-RS-colouring.
	If the gold edge is not $f$, then it must contain also an edge of $\sigma G$,
	since it requires at least one red edge.
	Then the cycle passes through both vertices of $f$.
	However, we can now create a red-gold cycle with exactly one gold edge
	by removing the blue edges and adding $f$, which is a contradiction.
	Hence, $\delta'$ is an RS-colouring.

	Now assume we have any (pseudo-)RS-colouring of $H$.
	When we replace the gold by red we get a NAC-colouring of $H$ by condition \cref{it:gold2red}.
	This is also a NAC-colouring on any subgraph or it is monochromatic on that subgraph.
	Hence, either $G$ or $\sigma G\cong G$ has a NAC-colouring.
	Since finding a NAC-colouring is NP-complete \cite[Theorem 3.5]{NAC-NP},
	we know now that finding a (pseudo-)RS-colouring is NP-hard.
	Since checking whether a given colouring is a NAC-colouring
	can be done in polynomial time, so it is for a pseudo-RS-colouring.
\end{proof}
We remark that it is unknown if the existence of an RS-colouring is NP-complete,
since we do not know if the number of almost red-blue cycles
is bounded by a polynomial in the number of vertices.

\section{Reflection-symmetric flex implies RS-colouring}
\label{sec:necessary}
In this section we determine a necessary condition for the existence of a reflection-symmetric realisation.
In particular, we show that a reflection-symmetric flex induces an RS-colouring.

We begin by recalling a simple variable transformation for the algebraic equation representing the edges.

\begin{definition}
	\label{def:RSalgebraicMotion}
	Let $(G,p)$ be a reflection-symmetric framework and $v_0\in V_G$.
	The \emph{induced edge lengths} of $(G,p)$ is
	the map $\lambda_p:E_G \rightarrow \RR^+$ given by $\lambda_p(uv)=||p(u)-p(v)||$.
	We define $\RSconfigSpace(G,p)$ to be the algebraic subset of $\CC^{2|V_G|}$ defined by the equations
	\begin{align}
		(x_u - x_v)^2 + (y_u - y_v)^2 &= \lambda_p(uv)^2 \text{ for all } uv \in E_G, \label{eq:edgeLengths} \\
		y_{v_0} = 0, \qquad	x_{\sigma v} = - x_{v}, \qquad y_{\sigma v} &= y_v \text{ for all } v \in V_G\,. \label{eq:symmetry}
	\end{align}
	An irreducible algebraic curve $\motion\subseteq\RSconfigSpace(G,p)$ is called
	a \emph{reflection-symmetric algebraic motion} of $(G,p)$.
	For $u,v\in V_G$, we define the following functions in the complex function field $\CC(\motion)$:
	\begin{equation*}
		\Wfun{u}{v} = (x_u - x_v) + \ci(y_u - y_v) \qquad\text{ and }\qquad
		\Zfun{u}{v} = (x_u - x_v) - \ci(y_u - y_v)
	\end{equation*} 
\end{definition}

The main concept in this section are valuations of the function field of an algebraic motion.
As we see later, thresholds on these valuations give us the three colours.
\begin{definition}
	Let $\motion$ be an irreducible algebraic curve.
	A \emph{valuation} of the complex function field $\CC(\motion)$
	is a map $\nu:\CC(\motion)\setminus\{0\} \rightarrow \QQ$ such that
	\begin{enumerate}
	  \item $\nu(ab)=\nu(a)+\nu(b)$ for all $a,b\in \CC(\motion)\setminus\{0\}$,
	  \item $\nu(a+b)\geq \min\{\nu(a),\nu(b)\}$ for all $a,b\in \CC(\motion)\setminus\{0\}$ such that $a+b\neq 0$, and
	  \item $\nu(\CC \setminus \{0\})=\{0\}$.
	\end{enumerate}
\end{definition}

\begin{remark}
	\label{rem:WZ}
	For any $uv \in E_G$ and a valuation $\nu$, we have from \Cref{eq:edgeLengths,eq:symmetry}: 
	\begin{align*}
		0 = \nu(\lambda_p(uv)) = \nu(\Wfun{u}{v}\Zfun{u}{v}) = \nu(\Wfun{u}{v}) + \nu(\Zfun{u}{v})\qquad \text{and}\qquad
		\nu(\Wfun{\sigma u}{\sigma v}) = \nu(\Zfun{u}{v}).
	\end{align*}
\end{remark}

We observe a relation between valuations and angle changes during a flex.
\begin{lemma}\label{lem:nonConstAngle2motionWithValuation}
	Let $(G,p)$ be a reflection-symmetric flexible framework with edges $u_1v_1$ and $u_2v_2$. 
	The following holds:
	\begin{enumerate}
	  \item\label{it:flex2motion} If the angle between edges $u_1v_1$ and $u_2v_2$ changes along some flex of $(G,p)$, 
	  		then there exists a reflection-symmetric algebraic motion $\motion \subset \mathcal{V}_s(G,p)$
	  		such that the angle between edges $u_1v_1$ and $u_2v_2$ changes along the motion.
	  \item If $\motion \subset \mathcal{V}_s(G,p)$ is a reflection-symmetric algebraic motion 
	  		such that the angle between edges $u_1v_1$ and $u_2v_2$ changes along the motion,
	  		then there exists a valuation~$\nu$ of $\CC(\motion)$
			such that $\nu(\Wfun{u_1}{v_1})<\nu(\Wfun{u_2}{v_2})$.
	\end{enumerate}
\end{lemma}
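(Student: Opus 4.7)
Both parts turn on the same observation: the rational function $f := \Wfun{u_1}{v_1}/\Wfun{u_2}{v_2}$ encodes the angle between the two edges. Indeed, the edge equations~\eqref{eq:edgeLengths} force $\Wfun{u}{v}\Zfun{u}{v} = \lambda_p(uv)^2$, so on the real locus $|\Wfun{u}{v}| = \lambda_p(uv)$ is constant. Hence at real realisations $|f|$ is a positive constant and the directed angle between the two edges equals $\arg(f)$ up to a constant summand; consequently the angle is constant along a flex (or real arc of realisations) if and only if $f$ is constant as a rational function.

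For (i), the flex $(p_t)_{t\in[0,1]}$ is a continuous connected path contained in the real locus of $\RSconfigSpace(G,p)$. Since the finitely many irreducible components of $\RSconfigSpace(G,p)$ are Zariski-closed, the flex is contained in a single irreducible component $V\subseteq\RSconfigSpace(G,p)$. By the opening observation and the hypothesis, $f$ is non-constant on the flex, hence non-constant on $V$. If $\dim V=1$ we take $\motion=V$; otherwise we iteratively intersect $V$ with generic affine hyperplanes, applying a Bertini-type argument: the set of hyperplanes $H$ for which $V\cap H$ is either reducible or on which $f$ becomes constant is a proper closed subset of the hyperplane parameter space, so a generic $H$ strictly decreases the dimension while preserving both irreducibility and non-constancy of $f$. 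After $\dim V - 1$ steps we obtain the desired irreducible curve $\motion \subseteq V \subseteq \RSconfigSpace(G,p)$ on which $f$ is non-constant, i.e.\ the angle varies.

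For (ii), let $\tilde{\motion}$ denote the normalisation of the projective closure of $\motion$; this is a smooth projective curve whose complex function field coincides with $\CC(\motion)$. The discrete valuations of $\CC(\motion)$ trivial on $\CC$ correspond bijectively to closed points of $\tilde\motion$ via order of vanishing. By hypothesis $f$ is non-constant on $\motion$ (using the opening observation once more), so its principal divisor on $\tilde{\motion}$ has degree zero but is non-zero, and therefore $f$ must have at least one pole. Picking such a pole $P \in \tilde\motion$ and letting $\nu$ be the associated discrete valuation yields $\nu(f)<0$, which by \Cref{rem:WZ}-style manipulation is precisely $\nu(\Wfun{u_1}{v_1}) < \nu(\Wfun{u_2}{v_2})$, as required.

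The main obstacle is the Bertini step in (i): we need to verify that the hyperplane sections can be chosen simultaneously irreducible and non-degenerate for $f$, working in the affine (not projective) setting of $\RSconfigSpace(G,p)$. This is standard but merits explicit justification; everything else reduces to classical facts about rational functions on smooth projective curves.
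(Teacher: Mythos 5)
The paper itself does not spell this proof out: it simply says the result follows by running the proof of \cite[Lemma~2.7]{DLinfinite} inside the variety cut out by the extra symmetry equations~\eqref{eq:symmetry}. Your self-contained write-up follows essentially that same standard route (pass to an irreducible component, cut down to an irreducible curve, take a pole of $\Wfun{u_1}{v_1}/\Wfun{u_2}{v_2}$ on the smooth projective model), and part (ii) as you present it is correct. Two small points you gloss over: the flex must be a \emph{reflection-symmetric} flex, composed with a $t$-dependent vertical translation so that $y_{v_0}=0$ holds, before its points actually lie in $\RSconfigSpace(G,p)$; and ``the angle changes along the motion'' must be read as non-constancy of $f=\Wfun{u_1}{v_1}/\Wfun{u_2}{v_2}$ in $\CC(\motion)$, which your opening observation justifies on the real locus.

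The genuine gap is in part (i), in the sentence ``since the finitely many irreducible components \ldots{} are Zariski-closed, the flex is contained in a single irreducible component.'' Connectedness does not give this: a connected path can pass through the intersection of two components and spend time in each (think of a configuration space that is two curves crossing at a point). Closedness of the components only yields the conclusion if the sets $F\cap V_i$ were pairwise disjoint, which they need not be. The step can be repaired, and the repair is exactly what the cited argument does: since $f$ is continuous and non-constant on the connected flex, its image is an uncountable set of values; distributing witnesses of these values among the finitely many components, some component $V$ contains points realising uncountably many distinct values of $f$, so $f$ is non-constant on $V$ and $\dim V\geq 1$. From there your Bertini-type reduction (or, more elementarily, the classical fact that any two points of an irreducible affine variety lie on an irreducible curve inside it, applied to two points with distinct $f$-values) produces the desired curve $\motion$. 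With that substitution your proof is complete and consistent with the paper's intended argument.
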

\begin{proof}
	This is shown by following the proof of \cite[Lemma~2.7]{DLinfinite} with restricting
	set corresponding to the reflection symmetry --- namely, \Cref{eq:symmetry}.
\end{proof}

Now we show how to construct a pseudo-RS-colouring from a valuation.
\begin{lemma}
	\label{lem:valuationToPseudoRS}
	Let $\alpha > 0$, $\motion$ be a reflection-symmetric algebraic motion of $(G,p)$,
	and $\nu$ be a valuation of $\CC(\motion)$ such that $\nu(\Wfun{\bar{u}}{\bar{v}})> \alpha$
	for an edge $\bar{u}\bar{v}\in E_G$. Then the edge colouring $\delta$ defined as
	follows  is a pseudo-RS-colouring:
	\begin{align*}
		\delta(uv) :=
		\begin{cases}
			\red{} , &\text{ if } \nu(\Wfun{u}{v}) > \alpha \,,\\
			\gold{}, &\text{ if } \alpha\geq \nu(\Wfun{u}{v})\geq -\alpha\,,
			\\
			\blue{}, &\text{ if } \nu(\Wfun{u}{v}) < -\alpha\,.
		\end{cases}
	\end{align*}
\end{lemma}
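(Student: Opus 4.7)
The plan is to verify the four defining conditions of pseudo-RS-colourings in turn. I would start with the symmetry condition \cref{it:symmetryRedBlue}. By \Cref{rem:WZ}, $\nu(\Wfun{\sigma u}{\sigma v}) = \nu(\Zfun{u}{v})$, and the edge-length equation $\Wfun{u}{v}\Zfun{u}{v} = \lambda_p(uv)^2$ together with the fact that $\nu$ vanishes on nonzero constants yields $\nu(\Zfun{u}{v}) = -\nu(\Wfun{u}{v})$. Combining these gives $\nu(\Wfun{\sigma u}{\sigma v}) = -\nu(\Wfun{u}{v})$; since the thresholds defining the three colours of $\delta$ are symmetric about $0$, this is exactly $\delta(uv)=\red \iff \delta(\sigma u\sigma v)=\blue$.

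Condition \cref{it:containsRedBlue} then follows immediately: by hypothesis $\delta(\bar u \bar v)=\red$, so the symmetry just proved gives $\delta(\sigma \bar u\sigma \bar v)=\blue$, and $\delta(E_G)\subseteq\{\red,\blue,\gold\}$ is built into the definition of $\delta$. The remark following \Cref{def:pseudoRScol} reduces conditions \cref{it:gold2blue} and \cref{it:gold2red} to each other once \cref{it:symmetryRedBlue} is known, so it suffices to verify only \cref{it:gold2red}, i.e.\ that recolouring gold as red produces a NAC-colouring.

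The NAC property after the gold-to-red recolouring rests on a standard non-archimedean argument. Fix any cycle $u_1u_2\cdots u_ku_1$ in $G$. Since the edge displacements around a cycle telescope, $\sum_{i=1}^{k} \Wfun{u_i}{u_{i+1}} = 0$ in $\CC(\motion)$. The non-archimedean inequality applied to a zero sum forces the minimum of $\nu(\Wfun{u_i}{u_{i+1}})$ along the cycle to be attained at least twice. Running the analogous argument on $\sum_i \Zfun{u_i}{u_{i+1}} = 0$, and transporting back via $\nu(\Zfun{u}{v}) = -\nu(\Wfun{u}{v})$, shows that the \emph{maximum} of $\nu(\Wfun{u_i}{u_{i+1}})$ along the cycle is also attained at least twice.

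After recolouring, an edge is blue exactly when its $W$-valuation is $<-\alpha$ and red otherwise. If the cycle's minimum valuation is $\geq -\alpha$ all its edges are red; if the maximum is $<-\alpha$ all its edges are blue; in either case the cycle is monochromatic. Otherwise the minimum is $<-\alpha$ and the maximum is $\geq -\alpha$, and the twofold attainment of both extrema supplies at least two blue edges and at least two red edges on the cycle. Surjectivity onto $\{\red,\blue\}$ is inherited from \cref{it:containsRedBlue}. The only mild obstacle is the coordinated use of the $W$- and $Z$-relations to upgrade the single non-archimedean minimum statement into a simultaneous min-and-max statement; once this is observed, the rest is routine threshold bookkeeping.
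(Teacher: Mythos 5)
Your proposal is correct, and for the substantive part of the lemma it takes a genuinely different (more self-contained) route than the paper. Conditions \cref{it:symmetryRedBlue} and \cref{it:containsRedBlue} you handle exactly as the paper does, via $\nu(\Wfun{\sigma u}{\sigma v})=\nu(\Zfun{u}{v})=-\nu(\Wfun{u}{v})$ from \Cref{rem:WZ}; your reduction of \cref{it:gold2blue} to \cref{it:gold2red} via the remark after \Cref{def:pseudoRScol} is also legitimate, since that remark only uses \cref{it:symmetryRedBlue}. The difference is in how the NAC property of the recoloured map is established: the paper disposes of \cref{it:gold2blue} and \cref{it:gold2red} in one line by invoking \cite[Theorem~2.8]{movableGraphs} with thresholds $-\alpha-\varepsilon$ and $\alpha$, whereas you reprove that fact from scratch. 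Your argument --- telescoping $\sum \Wfun{u_i}{u_{i+1}}=0$ around a cycle to get the ultrametric ``minimum attained twice'' statement, and running the same argument on the $Z$-functions to get ``maximum attained twice'' after negating valuations --- is sound: the terms are nonzero on edges (since $\Wfun{u}{v}\Zfun{u}{v}=\lambda_p(uv)^2\neq 0$), and the case analysis on whether the min and max of $\nu(\Wfun{u}{v})$ over the cycle lie above or below $-\alpha$ correctly yields either a monochromatic cycle or at least two edges of each colour. What the paper's version buys is brevity and reuse of an established result; what yours buys is a self-contained proof that makes visible exactly where the two-sided control (min and max both doubled) comes from, namely the pairing of the $W$- and $Z$-relations. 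The only cosmetic caveat is that you should note the valuation takes only finitely many values on $E_G$, which is implicitly why a sharp threshold at $-\alpha$ (rather than $-\alpha-\varepsilon$) causes no trouble in your version.
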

\begin{proof}
	By \Cref{rem:WZ}, we have $\nu(\Wfun{\sigma u}{\sigma v}) = -\nu(\Wfun{u}{v})$,
	which gives \cref{it:symmetryRedBlue} in \Cref{def:pseudoRScol}.
	\Cref{it:containsRedBlue} follows from $\delta(\bar{u}\bar{v})=\red$ and $\delta(\sigma\bar{u}\sigma\bar{v})=\blue$.
	Taking thresholds $-\alpha-\varepsilon$ and $\alpha$ for a sufficiently small $\varepsilon>0$
	in \cite[Theorem~2.8]{movableGraphs} gives \cref{it:gold2blue} and \cref{it:gold2red} respectively.
\end{proof}
If a pseudo-RS-colouring can be obtained from a valuation as in \Cref{lem:valuationToPseudoRS},
we call it \emph{active} w.r.t.\ the algebraic motion $\motion$.
Active pseudo-RS-colourings are actually RS-colourings.
\begin{lemma}\label{lem:activeIsRS}
	Let $\motion$ be a reflection-symmetric algebraic motion of $(G,p)$.
	Any pseudo-RS-colouring $\delta$ of $G$ that is active w.r.t.\ $\motion$
	is an RS-colouring. Particularly, if $\delta$ has an almost red-blue cycle,
	then it has a certificate which is active w.r.t.\ $\motion$.  
\end{lemma}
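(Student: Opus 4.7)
The plan is to follow the non-archimedean cycle-sum strategy from the NAC-colouring theory (cf.\ \cite{movableGraphs,DLinfinite}), supplemented by a symmetrisation step to respect the involution~$\sigma$. If $\delta$ has no almost red-blue cycle then $\delta$ is an RS-colouring by definition, so fix an almost red-blue cycle $C$ with unique gold edge $g$. Conditions \cref{it:gold2blue} and \cref{it:gold2red} force $C$ to contain at least two red and at least two blue edges: a cycle with a single gold edge together with a single edge of one non-gold colour and the remainder of the other colour would be neither monochromatic nor balanced after either gold-swap, contradicting the NAC property.

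Orient $C$ as $v_0 v_1 \cdots v_k = v_0$. The telescoping identity $\sum_{i=0}^{k-1} W_{v_{i+1}, v_i} = 0$ holds in $\CC(\motion)$ by direct cancellation of coordinate differences. Applying the valuation $\nu$ underlying $\delta$ and the ultrametric inequality, the minimum $\beta := \min_{e \in C} \nu(W_e)$ must be attained by at least two edges of $C$. Since gold edges satisfy $\nu(W_g) \in [-\alpha,\alpha]$, red edges satisfy $\nu(W) > \alpha$, and blue edges satisfy $\nu(W) < -\alpha$, the minimum necessarily satisfies $\beta < -\alpha$ and is therefore attained by at least two blue edges $e_1, e_2 \in C$. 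In particular $\delta(e_1) = \delta(e_2) = \blue$, which gives the first half of what a certificate requires.

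The remaining task is to build a pseudo-RS-colouring $\delta'$, active w.r.t.~$\motion$, with $\delta'(e_1) \neq \delta'(e_2)$. Because $e_1$ and $e_2$ share the same $\nu$-valuation, no mere threshold shift in $\nu$ can separate them; instead we choose a \emph{second} valuation $\nu'$ of $\CC(\motion)$ with $\nu'(\Wfun{e_1}{}) \neq \nu'(\Wfun{e_2}{})$. Such a $\nu'$ exists whenever $\Wfun{e_1}{}/\Wfun{e_2}{}$ is non-constant in $\CC(\motion)$, which is automatic on a one-dimensional function field provided the two functions are not scalar multiples. To guarantee that the resulting colouring respects condition \cref{it:symmetryRedBlue}, we select $\nu'$ from a $\sigma$-invariant place of $\motion$, or else symmetrise by pairing $\nu'$ with $\nu' \circ \sigma^*$. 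Applying \Cref{lem:valuationToPseudoRS} to $\nu'$ with a threshold $\alpha'$ chosen to separate $\nu'(\Wfun{e_1}{})$ from $\nu'(\Wfun{e_2}{})$ (possibly after replacing one by its negative via the $Z/W$ identity of \Cref{rem:WZ}) then yields the desired certificate $\delta'$.

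The main obstacle is the degenerate case in which $\Wfun{e_1}{}/\Wfun{e_2}{}$ is a non-zero constant on $\motion$, so that no valuation separates them. In that situation $e_1$ and $e_2$ remain strictly parallel throughout the motion and must be handled differently: one either falls back on the analogous pair of \emph{red} edges of $C$ attaining the maximum of $\nu(W)$ (extracted from the $\sum Z_{v_{i+1}, v_i} = 0$ telescoping, using $\nu(Z_e) = -\nu(W_e)$), or one exploits the $\sigma$-conjugate cycle $\sigma C$ to construct $\delta'$ from the paired information on both sides of the symmetry axis. Maintaining $\sigma$-compatibility of the auxiliary valuation $\nu'$ simultaneously in all of these subcases is the principal technical step of the proof.
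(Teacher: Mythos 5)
Your opening steps are fine: the count showing an almost red-blue cycle $C$ must contain at least two red and two blue edges is correct, and the telescoping identity $\sum W_{v_{i+1},v_i}=0$ with the ultrametric inequality does force the minimum of $\nu(W_e)$ over $C$ to be attained by at least two (necessarily blue) edges $e_1,e_2$. But this is where the argument goes wrong in strategy: the certificate does not need to separate \emph{that particular} pair --- it needs \emph{some} pair of edges of $C$ with equal $\delta$-colour that another active pseudo-RS-colouring separates. By committing to the minimum-attaining pair, you create the degenerate case you flag at the end ($\Wfun{e_1}{}/\Wfun{e_2}{}$ constant, i.e.\ $e_1,e_2$ mutually parallel along the whole motion), and neither of your fallbacks resolves it: the two red edges attaining the maximum of $\nu(W)$ can just as well be mutually parallel throughout, and the $\sigma$-conjugate of a mutually parallel pair is again mutually parallel (by \Cref{rem:WZ}, $\nu(\Wfun{\sigma u}{\sigma v})=-\nu(\Wfun{u}{v})$ for every valuation, so conjugation cannot create separability where there was none). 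So the ``principal technical step'' you defer is exactly the content of the lemma, and it is left unproved.

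The paper closes this gap with a different, genuinely geometric argument that you are missing. Assume for contradiction that \emph{every} pair of red edges of $C$ and \emph{every} pair of blue edges of $C$ keeps its mutual angle along $\motion$. Then the red edge-vectors are a common rotation $\beta^{\red}_t$ of their initial positions, likewise $\beta^{\blue}_t$ for blue and $\beta^{\gold}_t$ for the single gold edge, and the cycle-sum $0=\sum_{(u,v)\in C}(p_t(v)-p_t(u))$ becomes $0=\beta^{\gold}_t(p(\bar v)-p(\bar u))+\beta^{\blue}_t s_{\blue}+\beta^{\red}_t s_{\red}$ with $s_{\blue},s_{\red}$ constant. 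A short elementary analysis of when a sum of two rotated constant vectors can itself be a constant non-zero vector shows that $(\beta^{\gold}_t)^{-1}\beta^{\blue}_t$ or $(\beta^{\gold}_t)^{-1}\beta^{\red}_t$ must be independent of $t$, i.e.\ all blue (or all red) edges of $C$ keep their angle with the gold edge. But edges keeping a mutual angle have equal value under the original valuation $\nu$, contradicting $\nu(\Wfun{u}{v})<-\alpha$ for blue versus $\nu(\Wfun{\bar u}{\bar v})\geq-\alpha$ for gold. Hence \emph{some} same-coloured pair in $C$ has a changing mutual angle, and for that pair \Cref{lem:nonConstAngle2motionWithValuation} supplies a separating valuation and \Cref{lem:valuationToPseudoRS} an active certificate. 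You would need to supply an argument of this kind (ruling out simultaneous angle-preservation of all same-coloured pairs) for your proof to go through; the valuation-theoretic telescoping alone cannot do it.
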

\begin{proof}
	If there is no almost red-blue cycle in $\delta$, we are done.
	Suppose that $C$ is an almost red-blue cycle in $\delta$ with $P$ being the set of red and blue edges.
	Let $\bar{u}\bar{v}$ be the gold edge.
	For $t\in [0,1]$, let $p_t$ be pair-wise distinct elements of $\motion$.
	Suppose that every two red, resp.\ blue, edges in $P$ keep their mutual angle along $\motion$.
	Namely, for every $t$ there is a rotation matrix $\beta^\red_t$ such that $p_t(v)-p_t(u) = \beta^\red_t\cdot (p(v)-p(u))$ for all red $uv\in C$,
	and similarly matrices $\beta^\blue_t$ and $\beta^\gold_t$ with analogous properties.
	For all $t\in [0,1]$ we have
	\begin{align*}
		0 = \sum_{(u,v)\in C}(p_t(v)-p_t(u))
		  = \beta^\gold_t\cdot(p(\bar{v})-p(\bar{u})) + \beta^\blue_t s_\blue + \beta^\red_t s_\red\,,
	\end{align*}
	where
	\begin{align*}
		s_\blue = \sum_{\substack{(u,v)\in C \\ \delta(uv)=\blue}}(p(v)-p(u))
		\qquad \text{and} \qquad
		s_\red = \sum_{\substack{(u,v)\in C \\ \delta(uv)=\red}}(p(v)-p(u))
	\end{align*}
	are independent of $t$.
	Hence,
	\begin{align*}
		0 \neq p(\bar{u})-p(\bar{v}) =
		\left(\beta^\gold_t\right)^{-1}\beta^\blue_t s_\blue + \left(\beta^\gold_t\right)^{-1}\beta^\red_t s_\red
	\end{align*}
	for all $t$,
	which is only possible if at least one of $\left(\beta^\gold_t\right)^{-1}\beta^\blue_t$ or
	$\left(\beta^\gold_t\right)^{-1}\beta^\red_t$ is independent of~$t$.\footnote{
	Let $\beta_t$ and $\beta'_t$ be rotation matrices (possibly) depending on $t\in [0,1]$
	and $s,s'\in\RR^2$. Assume that the sum $\beta_t s + \beta'_t s'$ is a constant non-zero vector independent of $t$.
	The vectors $s,s'$ cannot be both non-zero, otherwise the sum is zero.
	If $s$, resp.\ $s'$, is zero, then $\beta'_t$, resp.\ $\beta$, is independent of $t$.  
	In the remaining case when both $s,s'$ are non-zero, both $\beta_t$ and $\beta'_t$ are independent of $t$:
	The angle between $\beta_t s$ and $\beta'_t s'$ cannot vary with $t$, since the norm of the sum is constant.
	Since also the angle of the sum is constant,
	the angle between $\beta_t s$ and $\beta'_t s'$ must be independent of $t$,
	particularly, $\beta_t$ and $\beta'_t$ are independent of $t$.}
	But that implies that all blue or all red edges keep their angle
	along the flex with the gold edge~$\bar{u}\bar{v}$,
	which contradicts that they have the same value under the valuation~$\nu$ used to construct $\delta$:
	if edges $u_1v_1,u_2v_2$ keep their mutual angle $\varphi$,
	then $\nu(\Wfun{u_1}{v_1})=\nu(e^{\ci\varphi}\Wfun{u_2}{v_2})=\nu(e^{\ci\varphi}) + \nu(\Wfun{u_2}{v_2})=\nu(\Wfun{u_2}{v_2})$.
	Therefore, there are two (w.l.o.g.) blue edges $e_1,e_2$ of $C$ whose
	mutual angle changes along the flex, so by \Cref{lem:nonConstAngle2motionWithValuation} we can construct
	a valuation that distinguishes them and yields an active pseudo-RS-colouring $\delta'$ with $\delta'(e_1)\neq\delta'(e_2)$.
\end{proof}

Combining the lemmas, we can show that a reflection-symmetric flex implies the existence of an RS-colouring.
\begin{theorem}\label{thm:flex2RScolouring}
	If $(G,p)$ is reflection-symmetric flexible,
	then $G$ has an RS-colouring.
\end{theorem}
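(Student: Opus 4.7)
The proof proposal is to chain the three preceding lemmas of this section. Since $(G,p)$ is reflection-symmetric flexible, it admits a non-trivial reflection-symmetric flex $(p_t)_{t\in [0,1]}$. The first step would be to show that some pair of edges of $G$ must change its mutual angle along this flex. Otherwise every edge vector would be obtained from its initial value by a single common rotation $R_{\theta(t)}$ depending only on $t$; fixing a base vertex $v_0$, connectedness of $G$ together with summation along paths would give $p_t(v)-p_t(v_0)=R_{\theta(t)}\bigl(p(v)-p(v_0)\bigr)$ for every $v\in V_G$, so $p_t$ would be the restriction of a rigid body motion, contradicting non-triviality. Hence there exist edges $u_1v_1,u_2v_2 \in E_G$ whose mutual angle is non-constant along $(p_t)$.

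Next, \Cref{lem:nonConstAngle2motionWithValuation}\cref{it:flex2motion} transfers this angle variation to a reflection-symmetric algebraic motion $\motion \subset \RSconfigSpace(G,p)$, and the second part of the same lemma supplies a valuation $\nu$ of $\CC(\motion)$ with $\nu(\Wfun{u_1}{v_1}) < \nu(\Wfun{u_2}{v_2})$. Setting $a:=\nu(\Wfun{u_1}{v_1})$ and $b:=\nu(\Wfun{u_2}{v_2})$, the identity $\nu(\Wfun{\sigma u}{\sigma v}) = -\nu(\Wfun{u}{v})$ of \Cref{rem:WZ} shows that among the four edges $u_1v_1, u_2v_2, \sigma u_1\sigma v_1, \sigma u_2\sigma v_2$ the $\nu$-values attained are $a,b,-a,-b$, whose maximum $M$ is strictly positive since $a<b$. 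I would then choose any $\alpha \in (0, M)$, which guarantees an edge $\bar{u}\bar{v}$ with $\nu(\Wfun{\bar{u}}{\bar{v}})>\alpha$.

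Applying \Cref{lem:valuationToPseudoRS} to $\nu$ and $\alpha$ now yields a pseudo-RS-colouring $\delta$ of $G$ that is, by construction, active with respect to $\motion$; note that the existence of a blue edge required by condition \cref{it:containsRedBlue} is automatic from the symmetry relation, since $\sigma\bar{u}\sigma\bar{v}$ receives colour blue. Finally, \Cref{lem:activeIsRS} upgrades $\delta$ to an RS-colouring: either $\delta$ has no almost red-blue cycle and already satisfies the definition, or each such cycle admits the certificate supplied by that lemma.

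The only real obstacle I anticipate is the first step, namely extracting a pair of edges with varying mutual angle from the bare assumption of non-triviality of the flex; everything else is a formal composition of \Cref{lem:nonConstAngle2motionWithValuation,lem:valuationToPseudoRS,lem:activeIsRS}.
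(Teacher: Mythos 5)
Your proposal is correct and follows essentially the same route as the paper: extract a pair of edges with non-constant mutual angle, pass to an algebraic motion and valuation via \Cref{lem:nonConstAngle2motionWithValuation}, pick a positive threshold $\alpha$ below the largest attained $\nu$-value so that \Cref{lem:valuationToPseudoRS} applies, and conclude with \Cref{lem:activeIsRS}. The only differences are cosmetic: you justify the first step (which the paper merely asserts) and choose $\alpha$ from an open interval rather than from the paper's finite set $S\setminus\{\max S\}$, both of which satisfy the hypotheses of the lemmas equally well.
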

\begin{proof}
	Since there must be a pair of edges whose angle is not constant along a reflection-symmetric flex,
	there exists an algebraic motion $\motion \subset \mathcal{V}_s(G,p)$
	and valuation $\nu$ on $\CC(\motion)$ by \Cref{lem:nonConstAngle2motionWithValuation}.
	Additionally,
	the set $S:=\{0\} \cup \{\left|\nu (\Wfun{u}{v})\right| \colon uv \in E_G\}$
	has at least two elements.
	We fix $\alpha\in S\setminus \{\max S\}$ to obtain
	an active pseudo-RS-colouring $\delta$ by \Cref{lem:valuationToPseudoRS}.
	By \Cref{lem:activeIsRS}, it is an RS-colouring.
\end{proof}

We note that the necessary condition on the existence of a reflection-symmetric flex is not strong enough
in the sense that there are examples of RS-colourings that cannot be active for any motion,
see \Cref{fig:nonValuationRScolouring}.
In order to strengthen the necessary condition, we need the following concept,
which exploits the idea of \cite[Section~3]{movableGraphs}.
\begin{definition}
	Let $G$ be reflection-symmetric graph with reflection $\sigma$.
	For a reflection-symmetric graph $H$ such that $G$ is a spanning subgraph of $H$,
	let $\pairsGold{H}$ be the set of all pairs $\{u,v\}$ such that $u$ is an invariant vertex,
	$v(\sigma v)\in E_G$ and there is
	a path from $u$ to $v$ that is gold in all RS-colourings of $H$.
	If $G_0,\dots,G_n$ is a sequence of reflection-symmetric graphs with reflection $\sigma$ such that
	\begin{enumerate}
	  \item $G=G_0$,
	  \item $G_i=(V_G,E_{G_{i-1}}\cup \pairsGold{G_{i-1}})$ for $i \in \{1,\ldots, n\}$, and
	  \item $\pairsGold{G_{n}}=\emptyset$,
	\end{enumerate}
	then $G_n$ is called the gold-closure of $G$, denoted by $\clGold{G}$. 
\end{definition}

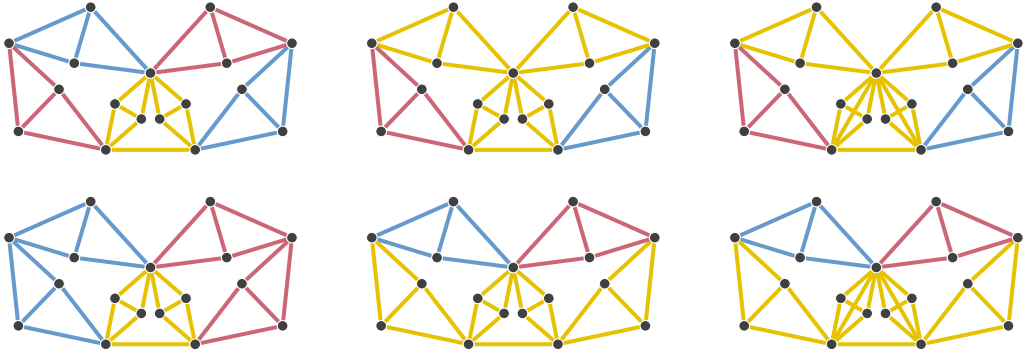
\begin{figure}[ht]
	\centering
	\begin{tikzpicture}
		  	\node[gvertex] (a) at (0.587785, -0.809017) {};
			\node[gvertex] (b) at (-0.587785, -0.809017) {};
			\draw[gedge] (a)edge(b);
		\begin{scope}[rotate around={60:(b)}]
		  	\node[gvertex] (1) at (0.587785, -0.809017) {};
			\node[gvertex] (2) at (0.951057, 0.309017) {};
			\node[gvertex] (3) at (0., 1.) {};
			\node[gvertex] (4) at (-0.951057, 0.309017) {};
			\node[gvertex] (5) at (-0.587785, -0.809017) {};
			
			\node[gvertex] (6) at (0.2, 0.125) {};
			\node[gvertex] (7) at (-0.2, 0.125) {};
			\node[gvertex] (8) at ($1/2*(1)+1/2*(5)+(0,0.2)$) {};
			\node[gvertex] (9) at ($1/2*(1)+1/2*(5)+(0,-0.2)$) {};
			\draw[gedge] (1)edge(8) (1)edge(9) (5)edge(8) (5)edge(9) (8)edge(9);
			\draw[bedge] (2)edge(3) (2)edge(1) (1)edge(6) (2)edge(6) (3)edge(6);
			\draw[redge] (4)edge(3) (5)edge(4) (4)edge(7) (5)edge(7) (3)edge(7);
		\end{scope}
		\begin{scope}[rotate around={-60:(a)}]
		  	\node[gvertex] (1) at (0.587785, -0.809017) {};
			\node[gvertex] (2) at (0.951057, 0.309017) {};
			\node[gvertex] (3) at (0., 1.) {};
			\node[gvertex] (4) at (-0.951057, 0.309017) {};
			\node[gvertex] (5) at (-0.587785, -0.809017) {};
			
			\node[gvertex] (6) at (0.2, 0.125) {};
			\node[gvertex] (7) at (-0.2, 0.125) {};
			\node[gvertex] (8) at ($1/2*(1)+1/2*(5)+(0,0.2)$) {};
			\node[gvertex] (9) at ($1/2*(1)+1/2*(5)+(0,-0.2)$) {};
			\draw[gedge] (1)edge(8) (1)edge(9) (5)edge(8) (5)edge(9) (8)edge(9);
			\draw[bedge] (2)edge(3) (2)edge(1) (1)edge(6) (2)edge(6) (3)edge(6);
			\draw[redge] (4)edge(3) (5)edge(4) (4)edge(7) (5)edge(7) (3)edge(7);
		\end{scope}
	\end{tikzpicture}
	\qquad
	\begin{tikzpicture}
		  	\node[gvertex] (a) at (0.587785, -0.809017) {};
			\node[gvertex] (b) at (-0.587785, -0.809017) {};
			\draw[gedge] (a)edge(b);
		\begin{scope}[rotate around={60:(b)}]
		  	\node[gvertex] (1) at (0.587785, -0.809017) {};
			\node[gvertex] (2) at (0.951057, 0.309017) {};
			\node[gvertex] (3) at (0., 1.) {};
			\node[gvertex] (4) at (-0.951057, 0.309017) {};
			\node[gvertex] (5) at (-0.587785, -0.809017) {};
			
			\node[gvertex] (6) at (0.2, 0.125) {};
			\node[gvertex] (7) at (-0.2, 0.125) {};
			\node[gvertex] (8) at ($1/2*(1)+1/2*(5)+(0,0.2)$) {};
			\node[gvertex] (9) at ($1/2*(1)+1/2*(5)+(0,-0.2)$) {};
			\draw[gedge] (1)edge(8) (1)edge(9) (5)edge(8) (5)edge(9) (8)edge(9);
			\draw[gedge] (2)edge(3) (2)edge(1) (1)edge(6) (2)edge(6) (3)edge(6);
			\draw[redge] (4)edge(3) (5)edge(4) (4)edge(7) (5)edge(7) (3)edge(7);
		\end{scope}
		\begin{scope}[rotate around={-60:(a)}]
		  	\node[gvertex] (1) at (0.587785, -0.809017) {};
			\node[gvertex] (2) at (0.951057, 0.309017) {};
			\node[gvertex] (3) at (0., 1.) {};
			\node[gvertex] (4) at (-0.951057, 0.309017) {};
			\node[gvertex] (5) at (-0.587785, -0.809017) {};
			
			\node[gvertex] (6) at (0.2, 0.125) {};
			\node[gvertex] (7) at (-0.2, 0.125) {};
			\node[gvertex] (8) at ($1/2*(1)+1/2*(5)+(0,0.2)$) {};
			\node[gvertex] (9) at ($1/2*(1)+1/2*(5)+(0,-0.2)$) {};
			\draw[gedge] (1)edge(8) (1)edge(9) (5)edge(8) (5)edge(9) (8)edge(9);
			\draw[bedge] (2)edge(3) (2)edge(1) (1)edge(6) (2)edge(6) (3)edge(6);
			\draw[gedge] (4)edge(3) (5)edge(4) (4)edge(7) (5)edge(7) (3)edge(7);
		\end{scope}
	\end{tikzpicture}
	\qquad
	\begin{tikzpicture}
		  	\node[gvertex] (a) at (0.587785, -0.809017) {};
			\node[gvertex] (b) at (-0.587785, -0.809017) {};
			\draw[gedge] (a)edge(b);
		\begin{scope}[rotate around={60:(b)}]
		  	\node[gvertex] (1) at (0.587785, -0.809017) {};
			\node[gvertex] (2) at (0.951057, 0.309017) {};
			\node[gvertex] (3) at (0., 1.) {};
			\node[gvertex] (4) at (-0.951057, 0.309017) {};
			\node[gvertex] (5) at (-0.587785, -0.809017) {};
			
			\node[gvertex] (6) at (0.2, 0.125) {};
			\node[gvertex] (7) at (-0.2, 0.125) {};
			\node[gvertex] (8) at ($1/2*(1)+1/2*(5)+(0,0.2)$) {};
			\node[gvertex] (9) at ($1/2*(1)+1/2*(5)+(0,-0.2)$) {};
			\draw[gedge] (1)edge(8) (1)edge(9) (5)edge(8) (5)edge(9) (8)edge(9);
			\draw[gedge] (2)edge(3) (2)edge(1) (1)edge(6) (2)edge(6) (3)edge(6);
			\draw[redge] (4)edge(3) (5)edge(4) (4)edge(7) (5)edge(7) (3)edge(7);
			\draw[gedge] (1)edge(5);
		\end{scope}
		\begin{scope}[rotate around={-60:(a)}]
		  	\node[gvertex] (1) at (0.587785, -0.809017) {};
			\node[gvertex] (2) at (0.951057, 0.309017) {};
			\node[gvertex] (3) at (0., 1.) {};
			\node[gvertex] (4) at (-0.951057, 0.309017) {};
			\node[gvertex] (5) at (-0.587785, -0.809017) {};
			
			\node[gvertex] (6) at (0.2, 0.125) {};
			\node[gvertex] (7) at (-0.2, 0.125) {};
			\node[gvertex] (8) at ($1/2*(1)+1/2*(5)+(0,0.2)$) {};
			\node[gvertex] (9) at ($1/2*(1)+1/2*(5)+(0,-0.2)$) {};
			\draw[gedge] (1)edge(8) (1)edge(9) (5)edge(8) (5)edge(9) (8)edge(9);
			\draw[bedge] (2)edge(3) (2)edge(1) (1)edge(6) (2)edge(6) (3)edge(6);
			\draw[gedge] (4)edge(3) (5)edge(4) (4)edge(7) (5)edge(7) (3)edge(7);
			\draw[gedge] (1)edge(5);
		\end{scope}
	\end{tikzpicture}
	
	\vspace{0.5cm}
	\begin{tikzpicture}
		  	\node[gvertex] (a) at (0.587785, -0.809017) {};
			\node[gvertex] (b) at (-0.587785, -0.809017) {};
			\draw[gedge] (a)edge(b);
		\begin{scope}[rotate around={60:(b)}]
		  	\node[gvertex] (1) at (0.587785, -0.809017) {};
			\node[gvertex] (2) at (0.951057, 0.309017) {};
			\node[gvertex] (3) at (0., 1.) {};
			\node[gvertex] (4) at (-0.951057, 0.309017) {};
			\node[gvertex] (5) at (-0.587785, -0.809017) {};
			
			\node[gvertex] (6) at (0.2, 0.125) {};
			\node[gvertex] (7) at (-0.2, 0.125) {};
			\node[gvertex] (8) at ($1/2*(1)+1/2*(5)+(0,0.2)$) {};
			\node[gvertex] (9) at ($1/2*(1)+1/2*(5)+(0,-0.2)$) {};
			\draw[gedge] (1)edge(8) (1)edge(9) (5)edge(8) (5)edge(9) (8)edge(9);
			\draw[bedge] (2)edge(3) (2)edge(1) (1)edge(6) (2)edge(6) (3)edge(6);
			\draw[bedge] (4)edge(3) (5)edge(4) (4)edge(7) (5)edge(7) (3)edge(7);
		\end{scope}
		\begin{scope}[rotate around={-60:(a)}]
		  	\node[gvertex] (1) at (0.587785, -0.809017) {};
			\node[gvertex] (2) at (0.951057, 0.309017) {};
			\node[gvertex] (3) at (0., 1.) {};
			\node[gvertex] (4) at (-0.951057, 0.309017) {};
			\node[gvertex] (5) at (-0.587785, -0.809017) {};
			
			\node[gvertex] (6) at (0.2, 0.125) {};
			\node[gvertex] (7) at (-0.2, 0.125) {};
			\node[gvertex] (8) at ($1/2*(1)+1/2*(5)+(0,0.2)$) {};
			\node[gvertex] (9) at ($1/2*(1)+1/2*(5)+(0,-0.2)$) {};
			\draw[gedge] (1)edge(8) (1)edge(9) (5)edge(8) (5)edge(9) (8)edge(9);
			\draw[redge] (2)edge(3) (2)edge(1) (1)edge(6) (2)edge(6) (3)edge(6);
			\draw[redge] (4)edge(3) (5)edge(4) (4)edge(7) (5)edge(7) (3)edge(7);
		\end{scope}
	\end{tikzpicture}
	\qquad
	\begin{tikzpicture}
		  	\node[gvertex] (a) at (0.587785, -0.809017) {};
			\node[gvertex] (b) at (-0.587785, -0.809017) {};
			\draw[gedge] (a)edge(b);
		\begin{scope}[rotate around={60:(b)}]
		  	\node[gvertex] (1) at (0.587785, -0.809017) {};
			\node[gvertex] (2) at (0.951057, 0.309017) {};
			\node[gvertex] (3) at (0., 1.) {};
			\node[gvertex] (4) at (-0.951057, 0.309017) {};
			\node[gvertex] (5) at (-0.587785, -0.809017) {};
			
			\node[gvertex] (6) at (0.2, 0.125) {};
			\node[gvertex] (7) at (-0.2, 0.125) {};
			\node[gvertex] (8) at ($1/2*(1)+1/2*(5)+(0,0.2)$) {};
			\node[gvertex] (9) at ($1/2*(1)+1/2*(5)+(0,-0.2)$) {};
			\draw[gedge] (1)edge(8) (1)edge(9) (5)edge(8) (5)edge(9) (8)edge(9);
			\draw[bedge] (2)edge(3) (2)edge(1) (1)edge(6) (2)edge(6) (3)edge(6);
			\draw[gedge] (4)edge(3) (5)edge(4) (4)edge(7) (5)edge(7) (3)edge(7);
		\end{scope}
		\begin{scope}[rotate around={-60:(a)}]
		  	\node[gvertex] (1) at (0.587785, -0.809017) {};
			\node[gvertex] (2) at (0.951057, 0.309017) {};
			\node[gvertex] (3) at (0., 1.) {};
			\node[gvertex] (4) at (-0.951057, 0.309017) {};
			\node[gvertex] (5) at (-0.587785, -0.809017) {};
			
			\node[gvertex] (6) at (0.2, 0.125) {};
			\node[gvertex] (7) at (-0.2, 0.125) {};
			\node[gvertex] (8) at ($1/2*(1)+1/2*(5)+(0,0.2)$) {};
			\node[gvertex] (9) at ($1/2*(1)+1/2*(5)+(0,-0.2)$) {};
			\draw[gedge] (1)edge(8) (1)edge(9) (5)edge(8) (5)edge(9) (8)edge(9);
			\draw[gedge] (2)edge(3) (2)edge(1) (1)edge(6) (2)edge(6) (3)edge(6);
			\draw[redge] (4)edge(3) (5)edge(4) (4)edge(7) (5)edge(7) (3)edge(7);
		\end{scope}
	\end{tikzpicture}
	\qquad
	\begin{tikzpicture}
		  	\node[gvertex] (a) at (0.587785, -0.809017) {};
			\node[gvertex] (b) at (-0.587785, -0.809017) {};
			\draw[gedge] (a)edge(b);
		\begin{scope}[rotate around={60:(b)}]
		  	\node[gvertex] (1) at (0.587785, -0.809017) {};
			\node[gvertex] (2) at (0.951057, 0.309017) {};
			\node[gvertex] (3) at (0., 1.) {};
			\node[gvertex] (4) at (-0.951057, 0.309017) {};
			\node[gvertex] (5) at (-0.587785, -0.809017) {};
			
			\node[gvertex] (6) at (0.2, 0.125) {};
			\node[gvertex] (7) at (-0.2, 0.125) {};
			\node[gvertex] (8) at ($1/2*(1)+1/2*(5)+(0,0.2)$) {};
			\node[gvertex] (9) at ($1/2*(1)+1/2*(5)+(0,-0.2)$) {};
			\draw[gedge] (1)edge(8) (1)edge(9) (5)edge(8) (5)edge(9) (8)edge(9);
			\draw[bedge] (2)edge(3) (2)edge(1) (1)edge(6) (2)edge(6) (3)edge(6);
			\draw[gedge] (4)edge(3) (5)edge(4) (4)edge(7) (5)edge(7) (3)edge(7);
			\draw[gedge] (1)edge(5);
		\end{scope}
		\begin{scope}[rotate around={-60:(a)}]
		  	\node[gvertex] (1) at (0.587785, -0.809017) {};
			\node[gvertex] (2) at (0.951057, 0.309017) {};
			\node[gvertex] (3) at (0., 1.) {};
			\node[gvertex] (4) at (-0.951057, 0.309017) {};
			\node[gvertex] (5) at (-0.587785, -0.809017) {};
			
			\node[gvertex] (6) at (0.2, 0.125) {};
			\node[gvertex] (7) at (-0.2, 0.125) {};
			\node[gvertex] (8) at ($1/2*(1)+1/2*(5)+(0,0.2)$) {};
			\node[gvertex] (9) at ($1/2*(1)+1/2*(5)+(0,-0.2)$) {};
			\draw[gedge] (1)edge(8) (1)edge(9) (5)edge(8) (5)edge(9) (8)edge(9);
			\draw[gedge] (2)edge(3) (2)edge(1) (1)edge(6) (2)edge(6) (3)edge(6);
			\draw[redge] (4)edge(3) (5)edge(4) (4)edge(7) (5)edge(7) (3)edge(7);
			\draw[gedge] (1)edge(5);
		\end{scope}
	\end{tikzpicture}
	\caption{All RS-colourings of a graph modulo conjugation (left and middle columns).
	The gold-closure of the graph is on the right with its two RS-colourings.
	The middle RS-colourings yield flexes using \Cref{thm:noAlmostRBcycle2flex},
	while the left two cannot be active w.r.t.\ any motion,
	since they are not restrictions of the RS-colourings of the gold-closure
	(see \Cref{lem:goldClosure}).  
	}
	\label{fig:nonValuationRScolouring}
\end{figure}

\begin{lemma}\label{lem:goldClosure}
	Any reflection-symmetric algebraic motion $\motion$ of a reflection-symmetric framework $(G,p)$
	is a reflection-symmetric algebraic motion of $(\clGold{G},p)$.
	Moreover, the active RS-colourings of $G$ w.r.t.\ $\motion$ are exactly the restrictions of
	the active RS-colourings of $\clGold{G}$ and every edge in $E_G\setminus E_{\clGold{G}}$ is gold
	in all RS-colourings of $\clGold{G}$. 
\end{lemma}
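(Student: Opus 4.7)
The lemma has three parts: (a) $\motion$ is a reflection-symmetric algebraic motion of $(\clGold{G},p)$; (b) the active RS-colourings of $G$ with respect to $\motion$ are exactly the restrictions of the active RS-colourings of $\clGold{G}$; and (c) every edge of $E_{\clGold{G}}\setminus E_G$ (the intended direction of the set-difference in the statement) is gold in every RS-colouring of $\clGold{G}$. I prove them in the order (a), (c), (b), since (b) will use the content of (a). The symmetry equations \Cref{eq:symmetry} of $\clGold{G}$ involve only vertices already in $G$, so only the new edge-length equations require genuine verification.

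\textbf{Part (a).} I induct on the gold-closure chain $G=G_0\subsetneq\cdots\subsetneq G_n=\clGold{G}$. Assume $\motion$ is a reflection-symmetric algebraic motion of $(G_{i-1},p)$ and fix $uv\in\pairsGold{G_{i-1}}$ with a path $u=w_0,\dots,w_k=v$ in $G_{i-1}$ whose edges are gold in every RS-colouring of $G_{i-1}$. The central claim is that each $\Wfun{w_j}{w_{j+1}}\in\CC(\motion)$ is a non-zero constant. Indeed, suppose some valuation $\nu$ of $\CC(\motion)$ satisfies $\nu(\Wfun{w_j}{w_{j+1}})\neq 0$; then the set $S_\nu:=\{0\}\cup\{|\nu(\Wfun{a}{b})|\colon ab\in E_{G_{i-1}}\}$ contains a positive element, so $0\in S_\nu\setminus\{\max S_\nu\}$, and \Cref{lem:valuationToPseudoRS} with $\alpha=0$ (using $w_jw_{j+1}$ or its $\sigma$-image as the seed edge depending on the sign of the valuation) followed by \Cref{lem:activeIsRS} produces an RS-colouring of $G_{i-1}$ in which $w_jw_{j+1}$ is red or blue, contradicting the choice of path. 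Hence $\Wfun{w_j}{w_{j+1}}$ has neither zero nor pole on the normalisation of $\motion$ and is a non-zero complex constant; the same holds for $\Zfun{w_j}{w_{j+1}}$ by \Cref{rem:WZ}. Summing along the path yields $\Wfun{u}{v}$ and $\Zfun{u}{v}$ constant, and their product equals $\|p(u)-p(v)\|^2$ on $\motion$, which is exactly the edge-length equation for the new edge.

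\textbf{Part (c).} Let $\delta$ be an RS-colouring of $\clGold{G}$ and let $uv\in\pairsGold{G_{i-1}}$ with associated path $w_0,\dots,w_k$ in $G_{i-1}$. I first show that $\delta|_{G_{i-1}}$ is either monochromatically gold or an RS-colouring of $G_{i-1}$. Conditions (ii)--(iv) of \Cref{def:pseudoRScol} pass to the restriction, and condition (iv) forces ``red and blue are either both present or both absent on $G_{i-1}$''; in the all-gold subcase the path is trivially gold, and otherwise $\delta|_{G_{i-1}}$ is a pseudo-RS-colouring whose almost red-blue cycles are almost red-blue cycles of $\delta$. The certificates coming from the RS-colouring property of $\delta$ then restrict to certificates in $G_{i-1}$, since their restrictions cannot be all gold (they must distinguish two edges of the cycle). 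In either case every path edge has colour gold under $\delta$. Now the cycle $u,w_1,\dots,w_{k-1},v,u$ has $k\geq 2$ gold edges from the path and the potentially non-gold edge $uv$; if $\delta(uv)$ were red or blue, replacing gold with the opposite colour (producing a NAC-colouring by condition (ii) or (iii) of \Cref{def:pseudoRScol}) would yield a cycle with exactly one edge of one colour and $k\geq 2$ of the other, contradicting the NAC property. Therefore $\delta(uv)=\gold$, and iterating along the chain $G_0,\dots,G_n$ proves the claim.

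\textbf{Part (b).} The field $\CC(\motion)$ and its valuations do not depend on whether $\motion$ is viewed as a motion of $(G,p)$ or of $(\clGold{G},p)$. For any valuation $\nu$, Part (a) gives $\nu(\Wfun{u}{v})=0$ for every $uv\in E_{\clGold{G}}\setminus E_G$, so $S_\nu^G=S_\nu^{\clGold{G}}$ and the sets of admissible thresholds coincide. For each admissible pair $(\nu,\alpha)$, the colouring rule of \Cref{lem:valuationToPseudoRS} assigns each common edge the same colour on both graphs and colours the new edges gold, so restriction is a bijection between active pseudo-RS-colourings of $\clGold{G}$ and those of $G$; \Cref{lem:activeIsRS} upgrades both sides to RS-colourings. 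I expect the main obstacle to be the restriction argument in Part (c): the dichotomy ``all gold or pseudo-RS'' for the restriction of an RS-colouring, together with the preservation of certificates under restriction, is exactly what allows the combinatorial cycle argument to close.
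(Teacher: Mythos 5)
Your proof is correct in substance and follows the paper's architecture: induction along the chain $G_0,\dots,G_n$, using the hypothesis that the witnessing path is gold in \emph{all} RS-colourings of $G_{i-1}$ together with \Cref{lem:nonConstAngle2motionWithValuation,lem:valuationToPseudoRS,lem:activeIsRS} to force the new edge length to be constant on $\motion$, then a cycle argument for the colour of the new edge, then the observation that $\CC(\motion)$ and its valuations are shared by both frameworks. Two steps are handled differently. For constancy, the paper argues that all edges of the path keep their mutual angles along $\motion$ (otherwise \cref{it:flex2motion} of \Cref{lem:nonConstAngle2motionWithValuation} combined with \Cref{lem:valuationToPseudoRS,lem:activeIsRS} would produce an active RS-colouring separating two path edges) and deduces that the endpoints keep their distance; you instead show that every valuation vanishes on every $\Wfun{w_j}{w_{j+1}}$ and conclude constancy from the absence of zeros and poles. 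Both routes work, and yours is closer to how the machinery is deployed in \Cref{thm:flex2RScolouring}. In Part (c) you are more careful than the paper, which simply asserts that a path gold in all RS-colourings of $G$ stays gold in all RS-colourings of $\clGold{G}$; your dichotomy (the restriction of an RS-colouring of $\clGold{G}$ to $G_{i-1}$ is either all gold or again an RS-colouring, with certificates restricting to certificates because they must separate two edges of the cycle) is exactly the justification the paper leaves implicit.

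Two small repairs are needed. First, you never check that $p(u)\neq p(v)$ for a new pair $uv\in\pairsGold{G_{i-1}}$: each $\Wfun{w_j}{w_{j+1}}$ is a non-zero constant, but their sum could a priori vanish, and the induced edge length must lie in $\RR^+$ for $(\clGold{G},p)$ to be a framework. The paper closes this in one line: $u$ is invariant, so $p'(u)$ lies on the mirror line for every $p'\in\motion$, whereas $v\sigma v\in E_G$ forces $p'(v)\neq \tau p'(v)$, so $p'(v)$ does not. Second, \Cref{lem:valuationToPseudoRS} is stated for $\alpha>0$, so your choice $\alpha=0$ is formally outside its hypotheses; the fix is immediate — take $\alpha=|\nu(\Wfun{w_j}{w_{j+1}})|/2>0$ with seed edge $w_jw_{j+1}$ or its $\sigma$-image, which still makes $w_jw_{j+1}$ red or blue in the resulting active RS-colouring and yields the same contradiction. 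With these two lines added, the argument is complete.
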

\begin{proof}
	In order to prove that a reflection-symmetric algebraic motion $\motion$ of $(G,p)$
	is also a reflection-symmetric algebraic motion of $(\clGold{G},p)$,
	it suffices to show that $\|p'(u)-p'(v)\|=\|p(u)-p(v)\|$
	for all $uv\in \pairsGold{G}$ and $p'\in \motion$,
	then the statement follows by induction.
	Let $uv\in \pairsGold{G}$ be such that $u=\sigma u$ and $v\sigma v\in E_G$.
	Let $P$ be a path from $u$ to $v$ that is gold for all RS-colourings of $G$.
	All edges in $P$ keep their mutual angles along $\motion$,
	otherwise, by \Cref{it:flex2motion} of \Cref{lem:nonConstAngle2motionWithValuation}, and
	\Cref{lem:valuationToPseudoRS,lem:activeIsRS},
	there would be an active RS-coloring distinguishing them, which is not possible by the assumption.
	Therefore, the distance between $u$ and $v$ is constant along the motion $\motion$.
	This distance is non-zero since $p(u)$ is on the $x$-axis as $u$ is an invariant vertex,
	while $v$ is not on the $x$-axis as $v(\sigma v)$ is an edge.
	Since $P$ is gold in all RS-colourings of $G$,
	it is gold also in all RS-colourings of $\clGold{G}$.
	Hence, the edge $uv$ is also gold in all RS-colourings of $\clGold{G}$.
	The fact the active RS-colourings of $G$ w.r.t.\ $\motion$ are exactly the restrictions of
	the active RS-colourings of $\clGold{G}$ w.r.t.\ $\motion$ holds since algebraic motion is the same for both graphs. 
\end{proof}
As a direct consequence, we obtain the following necessary condition on the existence of a reflection-symmetric flex.
\begin{corollary}
	\label{cor:flex2RScolouringOfGoldClosure}
	If $(G,p)$ is reflection-symmetric flexible,
	then $\clGold{G}$ has an RS-colouring.
\end{corollary}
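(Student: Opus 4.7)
The strategy is to reuse the proof of \Cref{thm:flex2RScolouring} but applied to $(\clGold{G},p)$ instead of $(G,p)$. The key bridge is \Cref{lem:goldClosure}, which transfers motions from $G$ to $\clGold{G}$.

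First, since $(G,p)$ is reflection-symmetric flexible, any reflection-symmetric flex must change the angle between some pair of edges $u_1v_1, u_2v_2 \in E_G$ (otherwise the flex would be the restriction of a rigid body motion, contradicting non-triviality). By \Cref{it:flex2motion} of \Cref{lem:nonConstAngle2motionWithValuation}, there is a reflection-symmetric algebraic motion $\motion\subset \RSconfigSpace(G,p)$ along which the angle between $u_1v_1$ and $u_2v_2$ still changes.

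Next, I invoke \Cref{lem:goldClosure} to conclude that $\motion$ is also a reflection-symmetric algebraic motion of $(\clGold{G},p)$. Since $u_1v_1,u_2v_2\in E_G\subseteq E_{\clGold{G}}$, the angle between these edges still changes along $\motion$ viewed as a motion of $\clGold{G}$. Applying the second item of \Cref{lem:nonConstAngle2motionWithValuation} (now with $\clGold{G}$ in place of $G$) yields a valuation $\nu$ of $\CC(\motion)$ such that $\nu(\Wfun{u_1}{v_1}) < \nu(\Wfun{u_2}{v_2})$.

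Finally, I repeat verbatim the last step of the proof of \Cref{thm:flex2RScolouring}, but over the edge set of $\clGold{G}$: the set
\begin{equation*}
	S := \{0\}\cup \{|\nu(\Wfun{u}{v})| \colon uv\in E_{\clGold{G}}\}
\end{equation*}
contains at least two distinct values because of the inequality above and \Cref{rem:WZ}. Choosing any $\alpha\in S\setminus\{\max S\}$ and applying \Cref{lem:valuationToPseudoRS} to $\clGold{G}$ produces a pseudo-RS-colouring $\delta$ of $\clGold{G}$ that is active w.r.t.\ $\motion$. By \Cref{lem:activeIsRS}, $\delta$ is an RS-colouring of $\clGold{G}$, which is what we needed.

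The only subtle point is checking that $(\clGold{G},p)$ is a legitimate framework, i.e.\ that the new edges added in the gold-closure do not have coincident endpoints; but this is precisely the non-vanishing distance argument carried out inside the proof of \Cref{lem:goldClosure}, so nothing further is required.
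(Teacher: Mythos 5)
Your proof is correct and follows essentially the route the paper intends when it calls \Cref{cor:flex2RScolouringOfGoldClosure} a direct consequence of \Cref{lem:goldClosure}: transfer the algebraic motion of $(G,p)$ to $(\clGold{G},p)$ and extract an active RS-colouring there. The only cosmetic difference is that you re-run the valuation argument of \Cref{thm:flex2RScolouring} on $\clGold{G}$, whereas the paper can equally well invoke the clause of \Cref{lem:goldClosure} stating that active RS-colourings of $G$ are restrictions of active RS-colourings of $\clGold{G}$; both yield the same conclusion.
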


\section{Constructions of reflection-symmetric flexes}
\label{sec:sufficient}

In this section we show how to construct a flex if we have a special RS-colouring
or a pair of them with certain properties.
When we have no almost \red{}-\blue\ cycle, we can give a construction of a reflection-symmetric framework with its flex.
\begin{theorem}
	\label{thm:noAlmostRBcycle2flex}
	Let $G$ be a reflection-symmetric graph with an RS-colouring
	that has no almost \red{}-\blue{} cycle.
	Then there exists a reflection-symmetric flexible framework $(G,p)$. 
\end{theorem}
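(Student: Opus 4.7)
The plan is to construct an explicit parametric family of realisations indexed by an angle $\alpha$. Identifying $\mathbb{R}^2 \cong \mathbb{C}$ so that $\tau z = -\overline{z}$, the idea is to let the displacement along every red edge rotate by $+\alpha$, every blue edge by $-\alpha$, and every gold edge stay fixed. Concretely, I would set
\[
p_\alpha(v) := \phi_R(v)\,e^{i\alpha} + \phi_B(v)\,e^{-i\alpha} + \phi_G(v),
\]
where $\phi_R, \phi_B, \phi_G : V_G \to \mathbb{C}$ are chosen so that $\phi_R$ is constant on each connected component of the blue-gold subgraph $\delta^{-1}\{\blue,\gold\}$, $\phi_B$ is constant on each component of the red-gold subgraph, and $\phi_G$ is constant on each component of the red-blue subgraph, each map taking distinct values on distinct components.

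To check edge lengths I write $\Delta := p_\alpha(v)-p_\alpha(u) = Ae^{i\alpha}+Be^{-i\alpha}+C$ and observe $|\Delta|^2$ is independent of $\alpha$ exactly when two of $A, B, C$ vanish. For a red edge the endpoints share a red-or-gold component (through the edge itself) and a red-or-blue component, so $B = C = 0$ and $\Delta = A e^{i\alpha}$ has constant modulus; the blue case is analogous, and for a gold edge the endpoints lie in the same red-or-gold and blue-or-gold components, so $A = B = 0$ and $\Delta = C$ is $\alpha$-independent. Non-degeneracy $p_\alpha(u)\neq p_\alpha(v)$ then reduces to showing that every edge joins distinct components of the relevant partition. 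A short cycle argument combining the two NAC-conditions of \Cref{def:pseudoRScol} with the assumed absence of almost \red{}-\blue{} cycles shows that \emph{no cycle contains exactly one edge of any single colour}: for instance, a red edge whose endpoints shared a blue-or-gold component would, via the connecting blue-gold path, complete a cycle with exactly one red edge, and the three cases (depending on which of red/blue/gold is missing in the path) each fail one of \cref{it:gold2blue} or \cref{it:gold2red}. Symmetric statements cover blue edges and, via the almost red-blue hypothesis, gold edges.

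For reflection-symmetry, the requirement $p_\alpha(\sigma v) = -\overline{p_\alpha(v)}$ translates, by matching coefficients of $e^{\pm i\alpha}$ and the constant term, into
\[
\phi_R(\sigma v) = -\overline{\phi_B(v)}, \quad
\phi_B(\sigma v) = -\overline{\phi_R(v)}, \quad
\phi_G(\sigma v) = -\overline{\phi_G(v)}.
\]
Since $\sigma$ swaps red and blue edges and preserves gold ones, it induces a bijection between red-or-gold and blue-or-gold components and an involution on red-or-blue components. I would therefore pick $\phi_B$ injectively on red-or-gold components, set $\phi_R(v) := -\overline{\phi_B(\sigma v)}$, and choose $\phi_G$ with purely imaginary values on $\sigma$-fixed red-or-blue components and conjugate-negated pairs on swapped components; consistency then forces invariant vertices to lie on the $y$-axis, as required.

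Non-triviality of the flex is immediate: the angle between any red edge and any blue edge changes by $2\alpha$ as $\alpha$ varies, so the realisations $p_\alpha$ are pairwise non-congruent and the flex is non-trivial. The main technical obstacle, and the step where the "no almost \red{}-\blue{} cycle" assumption is crucial, is the simultaneous cycle analysis establishing that the endpoints of every edge (of all three colours) lie in distinct components of the appropriate partition --- this is exactly what rules out the otherwise fatal case of a gold edge collapsing under the construction.
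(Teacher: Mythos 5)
Your proposal is correct and is essentially the paper's own proof rewritten in complex notation: your $\phi_B,\phi_R,\phi_G$ are exactly the paper's $a$, $\tau a\circ\sigma$ and $z$ (constant on the components obtained by deleting blue, red and gold edges respectively), your relation $\phi_R(v)=-\overline{\phi_B(\sigma v)}$ is the paper's symmetry device $\Theta_{-t}\tau a(\sigma u)$, and the edge-length, non-degeneracy and non-triviality arguments coincide, including the use of the no-almost-red-blue-cycle hypothesis precisely to prevent gold edges from collapsing. No gaps.
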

\begin{proof}
  We show a grid construction which involves special treatment of the golden edges.
  Let $R_1,\ldots,R_k$ be the connected components of the subgraph of $G$
  obtained by removing all \blue{} edges.
  Let $r_1,\ldots,r_k\in\RR^2$ be distinct points. 
  They serve as basepoints of our grid.
  To assign each vertex to a grid point we define the function $a\colon V_G\rightarrow \RR^2$
  with $a(u)=r_i$ if $u\in R_i$.
  
  In order to deal with the golden part we define the connected components
  of the subgraph of $G$ obtained by removing the \gold{} edges.
  We denote the invariant components by $\bar D_1,\ldots,\bar D_\ell$ (i.e.\ $\bar D_i=\sigma\bar D_i$ for all $i\in\range{1}{\ell}$).
  The components that are not invariant are denoted in a way to respect the symmetry, i.e.~$D_1,\ldots,D_\kappa$
  and $\tilde D_1,\ldots,\tilde D_\kappa$ such that $D_i=\sigma\tilde D_i$ for all $i\in\range{1}{\kappa}$.
  Let $d_1,\ldots,d_\kappa\in\RR^2$ be distinct points such that none lie on the $y$-axis
  and no two are reflection-symmetric.
  Let $\overline{d}_1,\ldots,\overline{d}_\ell\in\RR$ be distinct,
  and let $z$ be defined by 
  \begin{align*}
    z(u)=
    \begin{cases}
      d_i & \text{if } u\in D_i,\\
      \tau d_i & \text{if } u\in\tilde D_i,\\
      (0,\overline{d}_i) & \text{if } u\in\bar D_i.
    \end{cases}
  \end{align*}
  We see that $z(\sigma u)=\tau z(u)$, which we require in the rest of the proof.
  For $t \in [0,2\pi]$, define
  \begin{align*}
	\rott:=  	
  	\begin{bmatrix}
		\cos t & -\sin t \\ 
		\sin t & \cos t 
	\end{bmatrix}.
  \end{align*}
  We now obtain for each $t \in [0,2\pi]$ a realisation $p_t$ of $G$ given by
	\begin{equation*}
		p_t(u) :=\rott a(u)	+ \rotmt \tau a(\sigma u) + z(u) \, .
	\end{equation*}
	Using $\rott \tau = \tau \rotmt$, we check that each realisation is reflection-symmetric:
	\begin{equation*}
		p_t(\sigma u) = \rott a(\sigma u) + \rotmt \tau a(u)+ z(\sigma u)
			= \tau\rotmt \tau a(\sigma u) + \tau \rott a(u)+ \tau z(u) = \tau p_t(u)\,.
	\end{equation*}
		
	We now wish to show that for every edge $uv\in E_G$, the edge length of $uv$ is constant
	for all values of $t \in [0,2\pi]$.
	If $uv$ is a \red{} edge,
	then $a(u)=a(v)$ and also $z(u)=z(v)$,
	hence
	\begin{align*}
	  \| p_t(u)-p_t(v) \|= \left\| \rotmt \tau( a(\sigma u) - a(\sigma v)) \right\| = \|a(\sigma u) - a(\sigma v) \|.
	\end{align*}
	If $uv$ is a \blue{} edge,
	then $\sigma(uv)$ is \red{} and $\| p_t(u)-p_t(v) \|$ is constant by the reflection-symmetry.
	If $uv$ is a \gold{} edge,
	then $a(u)=a(v)$ and also $a(\sigma u)=a(\sigma v)$,
	hence
	\begin{align*}
	  \| p_t(u)-p_t(v) \|= \|z(u) - z(v) \|.
	\end{align*}
	
	Assume now that $p_0(u) = p_0(v)$ for some edge $uv \in E_G$.
	If $uv$ is \red{}, then $a(\sigma u) = a(\sigma v)$ by the computation above,
	which implies $\sigma u$ and $\sigma v$ are in the same $R_i$ by the assumption that $r_1,\ldots,r_k$ are distinct.
	But this is a contradiction, since the end vertices of the blue edge $\sigma(uv)$ cannot be connected by a path containing only red and gold edges (see \cref{it:gold2red} in \Cref{def:pseudoRScol}).
	By symmetry, $uv$ cannot be blue either.
	If $uv$ is \gold{}, then $z(u)=z(v)$, which is not possible as there are no almost red-blue cycles by assumption.
	Thus $p_0(u) \neq p_0(v)$.

	The flex $p_t$ is non-trivial as there exists some blue edge $e$ that changes its angle with the red edge $\sigma e$.
	Hence, $(G,p_0)$ is a reflection-symmetric flexible framework. 
\end{proof}

Similarly to the general case \cite{flexibleLabelings},
we can view this construction as placing vertices on a grid.
\Cref{fig:grid} shows the basic grid construction used when there are no golden edges, and also how this grid can be altered to allow for golden edges.
While the figure use a small perturbation $z$ (as defined in the proof of \Cref{thm:noAlmostRBcycle2flex}), the golden part might be of any size as $z$ can be chosen to be significantly larger.
The RS-colourings of \Cref{fig:pseudoRScol} lead to the constructions in \Cref{fig:gridexample}.
A more complicated example can be seen in \Cref{fig:gridexample2}.

\begin{figure}[ht]
	\centering
	\begin{tikzpicture}
		\pgfmathparse{3}\let\lim=\pgfmathresult
		\draw[sym] (0,-0.25)--(0,4.5);
		\begin{scope}
			\pgfmathparse{38}\let\w=\pgfmathresult
			\foreach \i [count=\c] in {0,1,2,...,3}
			{
				\draw[gridl] (180-\w:\i) node[below left,labelsty] {$\sigma r_\c$} --++(\w:\lim);
				\draw[gridl] (\w:\i) node[below right,labelsty] {$r_\c$} --++(180-\w:\lim);
			}
		\end{scope}
		\begin{scope}[opacity=0.25]
			\pgfmathparse{45}\let\w=\pgfmathresult
			\foreach \i [count=\c] in {0,1,2,...,3}
			{
				\draw[gridl] (180-\w:\i) --++(\w:\lim);
				\draw[gridl] (\w:\i) --++(180-\w:\lim);
			}
		\end{scope}
		\begin{scope}[opacity=0.08]
			\pgfmathparse{55}\let\w=\pgfmathresult
			\foreach \i [count=\c] in {0,1,2,...,3}
			{
				\draw[gridl] (180-\w:\i) --++(\w:\lim);
				\draw[gridl] (\w:\i) --++(180-\w:\lim);
			}
		\end{scope}
	\end{tikzpicture}
	\qquad
	\begin{tikzpicture}[spy scope={magnification=19, size=1cm},
			every spy in node/.style={
			magnifying glass, circular drop shadow,
			fill=white, draw, ultra thick, cap=round}
		]
		\pgfmathparse{40}\let\w=\pgfmathresult
		\pgfmathparse{3}\let\lim=\pgfmathresult
		\draw[sym] (0,0.25)--(0,4.25);
		\coordinate (d1) at (-30:0.01);
		\coordinate (d2) at (90:0.01);
		\coordinate (d3) at (210:0.01);
		\foreach \d in {d1,d2,d3}
		{
			\begin{scope}[shift=(\d)]
				\foreach \i in {0,1,2,...,3}
				{
					\draw[gridl2] (180-\w:\i)--++(\w:\lim);
					\draw[gridl2] (\w:\i)--++(180-\w:\lim);
				}
				\foreach \i in {0,1,2,3}
				{
					\foreach \j in {0,1,2,3}
					{
						\coordinate (d\d-\i-\j) at ($(\w:\i)+(180-\w:\j)$);
					}
				}
			\end{scope}
		}
		\foreach \i in {0,1,2,3}
		{
			\foreach \j in {0,1,2,3}
			{
				\draw[gridl2,colG,opacity=0.75] (dd1-\i-\j)edge(dd2-\i-\j) (dd2-\i-\j)edge(dd3-\i-\j) (dd3-\i-\j)edge(dd1-\i-\j);
				\node[fvertexs] at (dd1-\i-\j)  {};
				\node[fvertexs] at (dd2-\i-\j)  {};
				\node[fvertexs] at (dd3-\i-\j)  {};
			}
		}
		\spy on (0,0) in node;
	\end{tikzpicture}
	\caption{The basic grid construction (left) with some instances of the grid flex and a grid with zoom on the golden edges (right).}\label{fig:grid}
\end{figure}
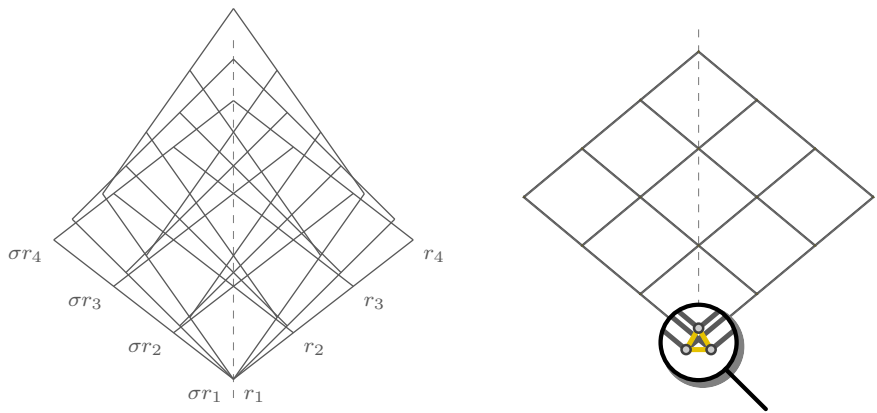

\begin{figure}[ht]
	\begin{center}
		\begin{tabular}{ccccc}
			\begin{tikzpicture}[scale=0.8]
				\draw[sym] (0,-0.2) -- (0,2.3);
				\node[gvertex] (1) at (-0.51, 0.) {};
				\node[gvertex] (2) at (0.51, 0.) {};
				\node[gvertexh] (3) at (0., 0.74) {};
				\node[gvertex] (4) at (-1.35, 0.5) {};
				\node[gvertex] (5) at (-0.76, 1.5) {};
				\node[gvertexh] (6) at (0., 2.1) {};
				\node[gvertex] (7) at (0.76, 1.5) {};
				\node[gvertex] (8) at (1.35, 0.5) {};
				\draw[gedge] (1)edge(2);
				\draw[gedge] (1)edge(3);
				\draw[bedge] (1)edge(4);
				\draw[gedge] (2)edge(3);
				\draw[redge] (2)edge(8);
				\draw[bedge] (3)edge(5);
				\draw[redge] (3)edge(7);
				\draw[gedge] (4)edge(5);
				\draw[bedge] (5)edge(6);
				\draw[redge] (6)edge(7);
				\draw[gedge] (7)edge(8);
			\end{tikzpicture}
			&
			\begin{tikzpicture}[scale=0.8]
				\draw[sym] (0,-0.2) -- (0,2.3);
				\node[gvertex] (1) at (-0.51, 0.) {};
				\node[gvertex] (2) at (0.51, 0.) {};
				\node[gvertex] (3) at (0., 0.74) {};
				\node[gvertex] (4) at (-1.35, 0.5) {};
				\node[gvertex] (5) at (-0.76, 1.5) {};
				\node[gvertex] (6) at (0., 2.1) {};
				\node[gvertex] (7) at (0.76, 1.5) {};
				\node[gvertex] (8) at (1.35, 0.5) {};
				\draw[gedge] (1)edge(2);
				\draw[gedge] (1)edge(3);
				\draw[bedge] (1)edge(4);
				\draw[gedge] (2)edge(3);
				\draw[redge] (2)edge(8);
				\draw[bedge] (3)edge(5);
				\draw[redge] (3)edge(7);
				\draw[gedge] (4)edge(5);
				\draw[redge] (5)edge(6);
				\draw[bedge] (6)edge(7);
				\draw[gedge] (7)edge(8);
			\end{tikzpicture}
			&
			\begin{tikzpicture}[scale=0.8]
				\draw[sym] (0,-0.2) -- (0,2.3);
				\node[gvertexh] (1) at (-0.51, 0.) {};
				\node[gvertexh] (2) at (0.51, 0.) {};
				\node[gvertex] (3) at (0., 0.74) {};
				\node[gvertex] (4) at (-1.35, 0.5) {};
				\node[gvertexh] (5) at (-0.76, 1.5) {};
				\node[gvertex] (6) at (0., 2.1) {};
				\node[gvertexh] (7) at (0.76, 1.5) {};
				\node[gvertex] (8) at (1.35, 0.5) {};
				\draw[gedge] (1)edge(2);
				\draw[gedge] (1)edge(3);
				\draw[bedge] (1)edge(4);
				\draw[gedge] (2)edge(3);
				\draw[redge] (2)edge(8);
				\draw[gedge] (3)edge(5);
				\draw[gedge] (3)edge(7);
				\draw[bedge] (4)edge(5);
				\draw[gedge] (5)edge(6);
				\draw[gedge] (6)edge(7);
				\draw[redge] (7)edge(8);
			\end{tikzpicture}
			&
			\begin{tikzpicture}[scale=0.8]
				\draw[sym] (0,-0.2) -- (0,2.3);
				\node[gvertex] (1) at (-0.51, 0.) {};
				\node[gvertex] (2) at (0.51, 0.) {};
				\node[gvertexh] (3) at (0., 0.74) {};
				\node[gvertexh] (4) at (-1.35, 0.5) {};
				\node[gvertex] (5) at (-0.76, 1.5) {};
				\node[gvertexh] (6) at (0., 2.1) {};
				\node[gvertex] (7) at (0.76, 1.5) {};
				\node[gvertexh] (8) at (1.35, 0.5) {};
				\draw[gedge] (1)edge(2);
				\draw[gedge] (1)edge(3);
				\draw[gedge] (1)edge(4);
				\draw[gedge] (2)edge(3);
				\draw[gedge] (2)edge(8);
				\draw[bedge] (3)edge(5);
				\draw[redge] (3)edge(7);
				\draw[bedge] (4)edge(5);
				\draw[bedge] (5)edge(6);
				\draw[redge] (6)edge(7);
				\draw[redge] (7)edge(8);
			\end{tikzpicture}
			&
			\begin{tikzpicture}[scale=0.8]
				\draw[sym] (0,-0.2) -- (0,2.3);
				\node[gvertex] (1) at (-0.51, 0.) {};
				\node[gvertex] (2) at (0.51, 0.) {};
				\node[gvertexh] (3) at (0., 0.74) {};
				\node[gvertexh] (4) at (-1.35, 0.5) {};
				\node[gvertex] (5) at (-0.76, 1.5) {};
				\node[gvertex] (6) at (0., 2.1) {};
				\node[gvertex] (7) at (0.76, 1.5) {};
				\node[gvertexh] (8) at (1.35, 0.5) {};
				\draw[gedge] (1)edge(2);
				\draw[gedge] (1)edge(3);
				\draw[gedge] (1)edge(4);
				\draw[gedge] (2)edge(3);
				\draw[gedge] (2)edge(8);
				\draw[bedge] (3)edge(5);
				\draw[redge] (3)edge(7);
				\draw[bedge] (4)edge(5);
				\draw[redge] (5)edge(6);
				\draw[bedge] (6)edge(7);
				\draw[redge] (7)edge(8);
			\end{tikzpicture}
			\\
			\begin{tikzpicture}[scale=0.8]
				\draw[sym] (0,-0.2) -- (0,2.3);
				\foreach \w/\o in {65/0.15,40/1}
				{
					\begin{scope}[opacity=\o]
						\coordinate (a1) at (\w:1);
						\coordinate (p1) at ($(0,0)!(a1)!(0,1)$);
						\coordinate (sa1) at ($(p1)+(p1)-(a1)$);
						\coordinate (d1) at (0,0.75);
						\coordinate (dt1) at (0.5,0);
						\node[fvertex] (1) at ($(0,0)-(dt1)$) {};
						\node[fvertex] (2) at (dt1) {};
						\node[fvertexh] (3) at (d1) {};
						\node[fvertex] (4) at ($(sa1)-(dt1)$) {};
						\node[fvertex] (5) at ($(sa1)+(d1)$) {}; {};
						\node[fvertexh] (6) at (3) {}; {};
						\node[fvertex] (7) at ($(a1)+(d1)$) {}; {};
						\node[fvertex] (8) at ($(a1)+(dt1)$) {};
						\draw[gedge] (1)edge(2);
						\draw[gedge] (1)edge(3);
						\draw[bedge] (1)edge(4);
						\draw[gedge] (2)edge(3);
						\draw[redge] (2)edge(8);
						\draw[bedge] (3)edge(5);
						\draw[redge] (3)edge(7);
						\draw[gedge] (4)edge(5);
						\draw[bedge] (5)edge(6);
						\draw[redge] (6)edge(7);
						\draw[gedge] (7)edge(8);
					\end{scope}
				}
			\end{tikzpicture}
			&
			\begin{tikzpicture}[scale=0.8]
				\draw[sym] (0,-0.2) -- (0,2.3);
				\foreach \w/\o in {65/0.15,40/1}
				{
					\begin{scope}[opacity=\o]
						\coordinate (a1) at (\w:1);
						\coordinate (p1) at ($(0,0)!(a1)!(0,1)$);
						\coordinate (sa1) at ($(p1)+(p1)-(a1)$);
						\coordinate (d1) at (0,0.75);
						\coordinate (dt1) at (0.5,0);
						\node[fvertex] (1) at ($(0,0)-(dt1)$) {};
						\node[fvertex] (2) at (dt1) {};
						\node[fvertex] (3) at (d1) {};
						\node[fvertex] (4) at ($(sa1)-(dt1)$) {};
						\node[fvertex] (5) at ($(sa1)+(d1)$) {}; {};
						\node[fvertex] (6) at ($(a1)+(sa1)+(d1)$) {}; {};
						\node[fvertex] (7) at ($(a1)+(d1)$) {}; {};
						\node[fvertex] (8) at ($(a1)+(dt1)$) {};
						\draw[gedge] (1)edge(2);
						\draw[gedge] (1)edge(3);
						\draw[bedge] (1)edge(4);
						\draw[gedge] (2)edge(3);
						\draw[redge] (2)edge(8);
						\draw[bedge] (3)edge(5);
						\draw[redge] (3)edge(7);
						\draw[gedge] (4)edge(5);
						\draw[redge] (5)edge(6);
						\draw[bedge] (6)edge(7);
						\draw[gedge] (7)edge(8);
					\end{scope}
				}
			\end{tikzpicture}
			&
			\begin{tikzpicture}[scale=0.8]
				\draw[sym] (0,-0.2) -- (0,2.3);
				\foreach \w/\o in {65/0.15,40/1}
				{
					\begin{scope}[opacity=\o]
						\coordinate (a1) at (\w:1);
						\coordinate (p1) at ($(0,0)!(a1)!(0,1)$);
						\coordinate (sa1) at ($(p1)+(p1)-(a1)$);
						\coordinate (d1) at (0,0.75);
						\coordinate (dt1) at (0.5,0);
						\node[fvertexh] (1) at ($(0,0)-(dt1)$) {};
						\node[fvertexh] (2) at (dt1) {};
						\node[fvertex] (3) at (d1) {};
						\node[fvertex] (4) at ($(sa1)-(dt1)$) {};
						\node[fvertexh] (5) at (1) {}; {};
						\node[fvertex] (6) at (0,2) {}; {};
						\node[fvertexh] (7) at (2) {}; {};
						\node[fvertex] (8) at ($(a1)+(dt1)$) {};
						\draw[gedge] (1)edge(2);
						\draw[gedge] (1)edge(3);
						\draw[bedge] (1)edge(4);
						\draw[gedge] (2)edge(3);
						\draw[redge] (2)edge(8);
						\draw[gedge] (3)edge(5);
						\draw[gedge] (3)edge(7);
						\draw[bedge] (4)edge(5);
						\draw[gedge] (5)edge(6);
						\draw[gedge] (6)edge(7);
						\draw[redge] (7)edge(8);
					\end{scope}
				}
			\end{tikzpicture}
			&
			\begin{tikzpicture}[scale=0.8]
				\draw[sym] (0,-0.2) -- (0,2.3);
				\foreach \w/\o in {65/0.15,40/1}
				{
					\begin{scope}[opacity=\o]
						\coordinate (a1) at (\w:1);
						\coordinate (p1) at ($(0,0)!(a1)!(0,1)$);
						\coordinate (sa1) at ($(p1)+(p1)-(a1)$);
						\coordinate (d1) at (0,0.75);
						\coordinate (dt1) at (0.5,0);
						\node[fvertex] (1) at ($(0,0)-(dt1)$) {};
						\node[fvertex] (2) at (dt1) {};
						\node[fvertexh] (3) at (d1) {};
						\node[fvertexh] (4) at (3) {};
						\node[fvertex] (5) at ($(sa1)+(d1)$) {}; {};
						\node[fvertexh] (6) at (3) {}; {};
						\node[fvertex] (7) at ($(a1)+(d1)$) {}; {};
						\node[fvertexh] (8) at (3) {};
						\draw[gedge] (1)edge(2);
						\draw[gedge] (1)edge(3);
						\draw[gedge] (1)edge(4);
						\draw[gedge] (2)edge(3);
						\draw[gedge] (2)edge(8);
						\draw[bedge] (3)edge(5);
						\draw[redge] (3)edge(7);
						\draw[bedge] (4)edge(5);
						\draw[bedge] (5)edge(6);
						\draw[redge] (6)edge(7);
						\draw[redge] (7)edge(8);
					\end{scope}
				}
			\end{tikzpicture}
			&
			\begin{tikzpicture}[scale=0.8]
				\draw[sym] (0,-0.2) -- (0,2.3);
				\foreach \w/\o in {65/0.15,40/1}
				{
					\begin{scope}[opacity=\o]
						\coordinate (a1) at (\w:1);
						\coordinate (p1) at ($(0,0)!(a1)!(0,1)$);
						\coordinate (sa1) at ($(p1)+(p1)-(a1)$);
						\coordinate (d1) at (0,0.75);
						\coordinate (dt1) at (0.5,0);
						\node[fvertex] (1) at ($(0,0)-(dt1)$) {};
						\node[fvertex] (2) at (dt1) {};
						\node[fvertexh] (3) at (d1) {};
						\node[fvertexh] (4) at (3) {};
						\node[fvertex] (5) at ($(sa1)+(d1)$) {}; {};
						\node[fvertex] (6) at ($(a1)+(sa1)+(d1)$) {}; {};
						\node[fvertex] (7) at ($(a1)+(d1)$) {}; {};
						\node[fvertexh] (8) at (3) {};
						\draw[gedge] (1)edge(2);
						\draw[gedge] (1)edge(3);
						\draw[gedge] (1)edge(4);
						\draw[gedge] (2)edge(3);
						\draw[gedge] (2)edge(8);
						\draw[bedge] (3)edge(5);
						\draw[redge] (3)edge(7);
						\draw[bedge] (4)edge(5);
						\draw[redge] (5)edge(6);
						\draw[bedge] (6)edge(7);
						\draw[redge] (7)edge(8);
					\end{scope}
				}
			\end{tikzpicture}
		\end{tabular}
  \end{center}
  \caption{Flexible realisations (lower row) for the RS-colourings from \Cref{fig:pseudoRScol}.
  The RS-colourings are recalled in the upper row.
  Square vertices indicate those
  which have two  monochromatic paths in different colours between them,
  forcing them to overlap.}
  \label{fig:gridexample}
\end{figure}
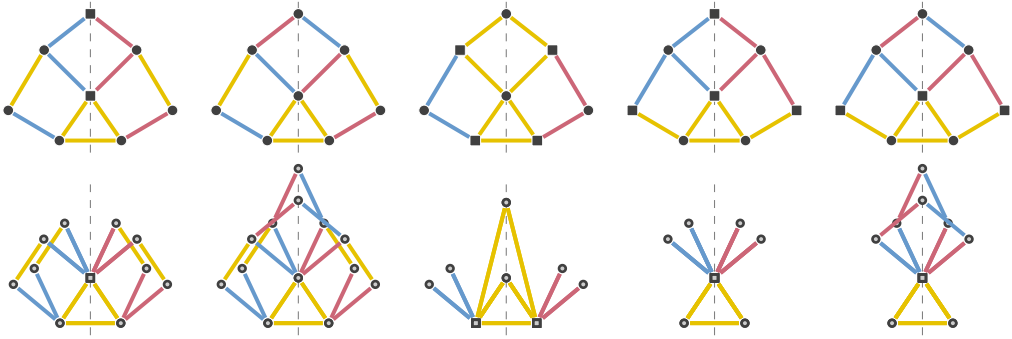

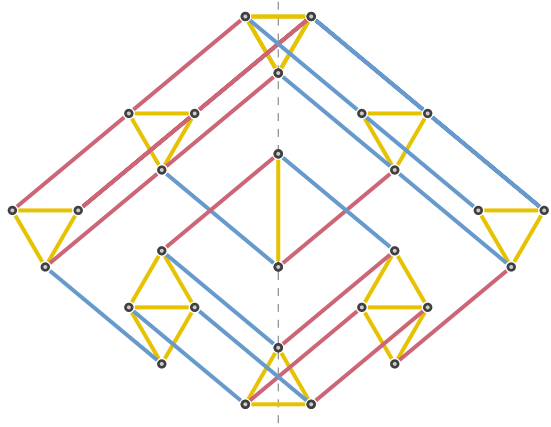
\begin{figure}[ht]
	\centering
	\begin{tikzpicture}[scale=2]
		\pgfmathparse{40}\let\w=\pgfmathresult
		\pgfmathparse{2}\let\lim=\pgfmathresult
		\draw[sym] (0,-0.25)--(0,2.6);
		\coordinate (d1) at (-30:0.25);
		\coordinate (d2) at (90:0.25);
		\coordinate (d3) at (210:0.25);
		\coordinate (m) at ($(d1)!(d2)!(d3)$);
		\coordinate (d4) at ($2*(m)-(d2)$);
		\foreach \d in {d1,d2,d3,d4}
		{
			\begin{scope}[shift=(\d)]
				\foreach \i in {0,1,2,3}
				{
					\foreach \j in {0,1,2,3}
					{
						\coordinate (d\d-\i-\j) at ($(\w:\i)+(180-\w:\j)$);
					}
				}
			\end{scope}
		}
		\foreach \d/\i/\j in {1/0/0,2/0/0,3/0/0,1/1/0,2/1/0,3/1/0,4/1/0,2/1/1,4/1/1,4/2/0,4/2/1,4/2/2,1/0/2,3/0/2,1/1/2,3/1/2,1/2/2,3/2/2}
		{
			\node[fvertex] (d\d-\i-\j) at (dd\d-\i-\j)  {};
			\node[fvertex] (d\d-\j-\i) at (dd\d-\j-\i)  {};
		}
		\draw[edge,colG] (d1-0-0)--(d2-0-0) (d1-0-0)--(d3-0-0) (d2-0-0)--(d3-0-0);
		\draw[edge,colG] (d1-1-0)--(d2-1-0) (d1-1-0)--(d3-1-0) (d1-1-0)--(d4-1-0) (d2-1-0)--(d3-1-0) (d3-1-0)--(d4-1-0);
		\draw[edge,colG] (d1-0-1)--(d2-0-1) (d1-0-1)--(d3-0-1) (d1-0-1)--(d4-0-1) (d2-0-1)--(d3-0-1) (d3-0-1)--(d4-0-1);
		\draw[edge,colG] (d2-1-1)--(d4-1-1);
		\draw[edge,colG] (d1-0-2)--(d3-0-2) (d1-0-2)--(d4-0-2) (d3-0-2)--(d4-0-2);
		\draw[edge,colG] (d1-2-0)--(d3-2-0) (d1-2-0)--(d4-2-0) (d3-2-0)--(d4-2-0);
		\draw[edge,colG] (d1-1-2)--(d3-1-2) (d1-1-2)--(d4-1-2) (d3-1-2)--(d4-1-2);
		\draw[edge,colG] (d1-2-1)--(d3-2-1) (d1-2-1)--(d4-2-1) (d3-2-1)--(d4-2-1);
		\draw[edge,colG] (d1-2-2)--(d3-2-2) (d1-2-2)--(d4-2-2) (d3-2-2)--(d4-2-2);
		\draw[edge,colG] (d1-2-2)--(d3-2-2) (d1-2-2)--(d4-2-2) (d3-2-2)--(d4-2-2);

		\draw[edge,colR] (d1-0-0)--(d1-1-0) (d2-0-0)--(d2-1-0) (d3-0-0)--(d3-1-0);
		\draw[edge,colB] (d1-0-0)--(d1-0-1) (d2-0-0)--(d2-0-1) (d3-0-0)--(d3-0-1);
		\draw[edge,colR] (d4-1-0)--(d4-2-0);
		\draw[edge,colB] (d4-0-1)--(d4-0-2);
		\draw[edge,colR] (d4-1-1)--(d4-2-1);
		\draw[edge,colB] (d4-1-1)--(d4-1-2);
		\draw[edge,colR] (d2-0-1)--(d2-1-1);
		\draw[edge,colB] (d2-1-0)--(d2-1-1);
		\draw[edge,colR] (d4-0-2)--(d4-1-2) (d4-1-2)--(d4-2-2);
		\draw[edge,colB] (d4-2-0)--(d4-2-1) (d4-2-1)--(d4-2-2);

		\draw[edge,colR] (d1-0-2)--(d1-1-2) (d3-0-2)--(d3-1-2) (d1-0-2)--(d1-1-2) (d1-1-2)--(d1-2-2) (d3-1-2)--(d3-2-2) (d1-1-2)--(d1-2-2);
		\draw[edge,colB] (d1-2-0)--(d1-2-1) (d3-2-0)--(d3-2-1) (d1-2-0)--(d1-2-1) (d1-2-1)--(d1-2-2) (d3-2-1)--(d3-2-2) (d1-2-1)--(d1-2-2);
	\end{tikzpicture}
	\caption{An example of a reflection-symmetric realisation obtained by the construction.}
	\label{fig:gridexample2}
\end{figure}

If a graph has no RS-colouring without an almost red-blue cycle,
we cannot apply \Cref{thm:noAlmostRBcycle2flex}.
For some graphs we can obtain a flex by combining the information of two RS-colourings; see \Cref{fig:RScolourings} for an example of this situation.
\begin{theorem}
	\label{thm:twoRS2flex}
	Let $G$ be a reflection-symmetric graph with reflection $\sigma$ and an invariant vertex $\tw$.
	Let $\delta_1,\delta_2$ be RS-colourings of $G$ that are certificates of each other
	for all almost red-blue cycles
	and $\delta$ be the map given
	by $\delta(e)=(\delta_1(e),\delta_2(e))$ for all $e\in E_G$.
	Also suppose:
	\begin{enumerate}
	  \item\label{it:twoRS2flex:sameGold} $\delta_1$ and $\delta_2$ have the same set of gold edges
	  		(hence, $|\delta(E_G)|\leq 5$),
	  \item there exist an invariant 5-cycle which is almost red-blue in both $\delta_1$ and $\delta_2$,
	  \item\label{it:twoRS2flex:neigh} there exists a unique partition $N,\sigma N$
	  		of the neighbours of $\tilde{w}$
	  		such that every two vertices in $N$ are connected
	  		by a path avoiding both $\tw$ and the gold edges,
	  \item\label{it:twoRS2flex:pcol} for every edge $uv\in E$, every path from $u$ to $v$ that does not contain any edge $e$ with $\delta(e)=\delta(uv)$
	  		must contain edges of the form $w_1\tw,\tw w_2$, where $w_1\in N$ and $w_2\in \sigma N$,
	  \item\label{it:twoRS2flex:allCombinations} for every cycle containing a path $(w_1,\tw,w_2)$ with $w_1 \in N$ and $w_2 \in \sigma N$, all possible five values of $\delta$ are attained over the cycle.   
	\end{enumerate}
	Then there is a reflection-symmetric flexible framework $(G,p)$.
\end{theorem}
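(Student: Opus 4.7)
The plan is to generalise the grid construction of Theorem~\ref{thm:noAlmostRBcycle2flex} by using both RS-colourings simultaneously, with the invariant vertex $\tw$ and its neighbour partition $N, \sigma N$ acting as a pivot between them. Since $\delta_1$ and $\delta_2$ share their gold edges (condition~\cref{it:twoRS2flex:sameGold}) but disagree on some red/blue edges, these disagreements yield precisely the mixed $\delta$-types $(\red,\blue)$ and $(\blue,\red)$. A natural first attempt at the realisation is
\begin{equation*}
	p_t(u) = \rott a_1(u) + \rotmt\tau a_1(\sigma u) + \rots a_2(u) + \rotms\tau a_2(\sigma u) + z(u),
\end{equation*}
where each $a_k$ is the component-assignment associated with $\delta_k$ as in the proof of Theorem~\ref{thm:noAlmostRBcycle2flex}, $z$ is the common gold-component assignment, and $s = s(t)$ is to be determined by edge-length compatibility. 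Reflection symmetry $p_t(\sigma u) = \tau p_t(u)$ then follows as before from $\rott\tau = \tau\rotmt$ and the equivariance of $a_k$ and $z$.

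The edge-length verification splits into the five cases of $\delta(uv)$. The same-colour types $(\gold,\gold)$, $(\red,\red)$, $(\blue,\blue)$ each give a displacement that is a rotation of a constant vector only under a constraint of the form $s(t) = t$; the mixed types $(\red,\blue)$ and $(\blue,\red)$ mix two independent rotations and instead demand $s(t) = -t$. These apparently incompatible demands can be reconciled only through the invariant almost red-blue $5$-cycle of condition~(ii): summing the five edge-vectors around $\tw$ yields a pentagon closure identity that collapses the contradiction into a single algebraic constraint, parametrising a one-parameter curve of compatible configurations. The construction must therefore be hybridised so that each vertex's position weights the $\delta_1$- and $\delta_2$-contributions according to whether it lies on the $N$- or $\sigma N$-side of $\tw$; each edge type then sees only a single rotational parameter and its length equation reduces to an instance of the single-colouring situation of Theorem~\ref{thm:noAlmostRBcycle2flex}.

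Once this local compatibility at the 5-cycle is established, conditions~\cref{it:twoRS2flex:neigh}, \cref{it:twoRS2flex:pcol} and \cref{it:twoRS2flex:allCombinations} propagate it to the whole graph: the partition~\cref{it:twoRS2flex:neigh} orients the flex at $\tw$, condition~\cref{it:twoRS2flex:pcol} forces every edge into the same pentagon equation, since any alternative path between its endpoints must cross $\tw$ from $N$ to $\sigma N$, and condition~\cref{it:twoRS2flex:allCombinations} guarantees that no such cycle contributes an extraneous constraint, as it already attains all five $\delta$-values. Non-degeneracy at $t = 0$ is obtained by generic choice of the base points $r_i^{(k)}$ and the gold offsets, and non-triviality follows from the observation that some mixed-colour edge and its $\sigma$-image must change their mutual angle along the flex. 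The main obstacle will be the rigorous hybridisation of the two colouring contributions: reconciling the same-colour and mixed-colour edge equations into a single globally well-defined flex, and using the \emph{certificate} hypothesis on $(\delta_1,\delta_2)$ to rule out the degeneracies that would otherwise arise from the almost red-blue cycles present in each individual colouring.
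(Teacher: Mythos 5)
Your overall direction is the right one---a two-parameter family of rotations $\rott,\rots$ whose parameters are eventually tied together by a condition coming from the invariant 5-cycle---but the proposal as written has a genuine gap at its core. The ansatz using $a_1$ and $a_2$, the component assignments of $\delta_1$ and $\delta_2$ taken \emph{separately}, is the wrong starting point: a $(\blue,\blue)$ edge is blue in both colourings, so both $a_1(u)-a_1(v)$ and $a_2(u)-a_2(v)$ can be non-zero, and you correctly observe that its length is then constant only if $s=t$, while mixed edges force $s=-t$. But these are not constraints that any ``pentagon closure identity'' can collapse; imposing both for all $t$ kills the flex entirely. The paper avoids the conflict altogether by defining the two component functions from the \emph{pair} colouring $\delta$: one function $a$ is constant on components obtained by deleting the $(\blue,\blue)$ edges, the other function $b$ on components obtained by deleting the $(\blue,\red)$ edges (with $z$ handling gold as before). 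With this choice, each of the five edge types has exactly one of the differences $a(u)-a(v)$, $b(u)-b(v)$, $z(u)-z(v)$ (or a $\sigma$-image thereof) non-zero, so every edge length is constant in $s$ and $t$ \emph{independently} and no incompatibility ever arises. Your proposed fix---weighting the two contributions according to whether a vertex lies on the $N$- or $\sigma N$-side of $\tw$---is not well defined, since $G\setminus\{\tw\}$ is in general connected and a vertex has no canonical side.

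The second missing ingredient is the vertex-splitting step, which is where conditions \cref{it:twoRS2flex:neigh}--\cref{it:twoRS2flex:allCombinations} actually do their work. The paper deletes $\tw$ and reattaches two new vertices $\tw_1,\tw_2$ to $N$ and $\sigma N$ respectively, producing a graph $\tilde{G}$ on which the induced colourings $\tdelta_i$ have \emph{no} almost red-blue cycles; this is what makes the non-degeneracy check go through (otherwise a gold edge on an almost red-blue cycle would receive zero length, exactly the failure shown in \Cref{fig:colourcomb}). The two-parameter family is then an honest flex of $\tilde{G}$ for all $(s,t)$, and the only constraint is the single scalar condition that $p_{s,t}(\tw_1)$ lie on the mirror line, so that $\tw_1$ and $\tw_2$ can be re-identified. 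Solvability of this condition for a continuous $s(t)$ is arranged by normalising lengths along the invariant 5-cycle ($z(\bar u)=(1,0)$, $\|a(x)\|=\|b(\tw_1)\|=2$) so that the chain $\bar u \to x \to \tw_1$ behaves as a two-bar linkage that can always reach the axis. Without the splitting and without the $\delta$-based component functions, the remaining steps of your outline (propagation via \cref{it:twoRS2flex:pcol} and \cref{it:twoRS2flex:allCombinations}, non-triviality) have nothing well-defined to propagate, so the proof does not yet go through.
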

\begin{proof}
	Let $H$ be the graph obtained from~$G$ by deleting $\tw$.
	We define the following graph by attaching two new vertices $\tw_1,\tw_2$ to $H$: 
	\begin{equation*}
		\tilde{G} = (V_H\cup\{\tw_1,\tw_2\}, E_H \cup \{u\tw_1 \colon u \in N\}\cup \{u\tw_2 \colon u \in \sigma N\})\,.
	\end{equation*}
	We extend the reflection of $H$ inherited from $G$ to a reflection $\sigma$ of $\tilde{G}$
	by setting $\sigma\tw_1=\tw_2$.
	In other words, we get $\tilde{G}$ by splitting the vertex $\tw$ from $G$
	and distributing its neighbours according to the partition.
	
	For each $\delta_i$, each almost red-blue
	cycle of $G$ passes from a vertex in $N$ to $\tw$ and then onto a vertex in $\sigma N$ by \cref{it:twoRS2flex:pcol}.
	Hence, $H$ has no almost red-blue cycles (with respect to each $\delta_i|_{E_H}$).
	For each~$\delta_i$, we get a pseudo-RS-colouring $\tdelta_i$ of $\tilde{G}$ by setting $\tdelta_i|_{E_H}=\delta_i|_{E_H}$ and $\tdelta_i(u\tw_j)=\delta_i(u\tw)$.
	Let $\tdelta(e)=(\tdelta_1(e),\tdelta_2(e))$ for all $e\in E_{\tilde{G}}$.
	Assume for contradiction there is an almost red-blue cycle in $(\tilde{G},\tdelta_i)$.
	Then it has to contain $\tw_1$ or $\tw_2$, otherwise it is an almost red-blue cycle of $H$,
	which is not possible by the fact above. 
	Assume it is $\tw_1$.
	Then the cycle contains the path $(u,\tw_1,v)$ with $u,v\in N$.
	But this path can be replaced by a red-blue path in $H$ from $u$ to $v$
	that exists by \cref{it:twoRS2flex:neigh}.
	This again gives an almost red-blue cycle in $H$ which is not possible.
	
	The goal is to construct a reflection-symmetric flex $p_t$ of $\tilde{G}$ such that
	$p_t(\tw_1)=p_t(\tw_2)$, since then setting $p_t(\tw)=p_t(\tw_1)$ gives a reflection-symmetric flex of $G$.
	In order to do so, let $a,b:V_{\tilde{G}}\rightarrow \RR^2$ be maps
	such that $a(u)=a(v)$ if and only if there is a path from $u$ to $v$ in $\tilde{G}$
	which has no edge $e$ with $\tdelta(e)=(\blue,\blue)$,
	while $b(u)=b(v)$ if and only if there is a path from $u$ to $v$ in $\tilde{G}$
	which has no edge $e$ with $\tdelta(e)=(\blue,\red)$.
	Let $z:V_{\tilde{G}}\rightarrow \RR^2$ be the same as in the proof of \Cref{thm:noAlmostRBcycle2flex}.
	We note that both $\tdelta_1$ and $\tdelta_2$ give the same $z$ (given the fixed $d_i$'s and $\bar{d}_j$'s) as they have the same gold edges by \ref{it:twoRS2flex:sameGold}.
	For $s,t \in [0,2\pi]$ we define a realisation $p_{s,t}$ of $\tilde{G}$ given by
	\begin{equation*}
		p_{s,t}(u) :=\rott a(u)	+ \rotmt \tau a(\sigma u) +\rots b(u)	+ \rotms \tau b(\sigma u) + z(u) \, .
	\end{equation*}
	Analogously to the proof of \Cref{thm:noAlmostRBcycle2flex}, $p_{s,t}(\sigma u)=\tau p_{s,t}(u)$.
	We have to check that for every edge $uv\in E_{\tilde{G}}$, the distance $\|p_{s,t}(u)-p_{s,t}(v)\|$
	is independent of $s$ and $t$.
	If $\tdelta(uv)=(\blue,\blue)$, then $\tdelta(\sigma(uv))=(\red,\red)$,
	therefore $a(\sigma u)=a(\sigma v)$.
	It also holds that $b(u)=b(v)$, $b(\sigma u)=b(\sigma v)$ and $z(u)=z(v)$.
	This gives
	\begin{equation}
		\label{eq:blueblue}
		\|p_{s,t}(u)-p_{s,t}(v)\| = \|\rott (a(u)-a(v))\| = \|a(u)-a(v)\|\,. 
	\end{equation} 
	In case $\tdelta(uv)=(\blue,\red)$, we obtain
	$\|p_{s,t}(u)-p_{s,t}(v)\| = \|\rots (b(u)-b(v))\| = \|b(u)-b(v)\|$.
	If $\tdelta(uv)=(\gold,\gold)$, we have
	\begin{equation}
		\label{eq:goldgold}
		\|p_{s,t}(u)-p_{s,t}(v)\| = \|z(u)-z(v)\|\,. 
	\end{equation}
	The remaining two combinations of colours follow by symmetry.
	
	Now we show that $p_{s,t}(u)\neq p_{s,t}(v)$ for every edge $uv$.
	Suppose for contradiction that $p_{s,t}(u)= p_{s,t}(v)$ for some edge $uv$.
	If $uv$ is gold, then $z(u)=z(v)$ by \eqref{eq:goldgold}.
	But this means that $u$ and $v$ are connected by a red-blue path,
	which contradicts that there is no almost red-blue cycle in $\tdelta_i$.
	If $\tdelta(uv)=(\blue,\blue)$, then $a(u)=a(v)$ by \eqref{eq:blueblue}.
	Hence, there is a path from $u$ to $v$ such that $\tdelta(e)\neq (\blue,\blue)$
	for every edge $e$ in the path.
	The path must contain $\tw_1$ or $\tw_2$,
	otherwise the corresponding path in $G$ violates \cref{it:twoRS2flex:pcol}.
	Suppose that the path contains both $\tw_1$ and $\tw_2$.
	Then the part from $\tw_1$ to $\tw_2$ forms a cycle in $G$
	that contradicts \cref{it:twoRS2flex:allCombinations} as it does not contain $(\blue,\blue)$.
	If only $\tw_1$ is contained, then we get a contradiction with \cref{it:twoRS2flex:pcol}
	since both the predecessor and successor of $\tw_1$ in the path are in $N$.
	The situation is analogous for all remaining combinations of $\tdelta(uv)$.
	
	We are left to set $s$ as a function of $t$ so that $p_{s(t),t}(\tw_1)$
	is on the $y$-axis,
	since then $p_{s(t),t}(\tw_1)=p_{s(t),t}(\tw_2)$ by the symmetry
	and $p_t = p_{s(t),t}$ is the desired reflection-symmetric flex.
	Let $\bar{u}(\sigma\bar{u})$ be the gold edge of the invariant 5-cycle
	and $x$ be the vertex adjacent to $\bar{u}$ and~$\tw_1$.
	Notice that \cref{it:twoRS2flex:pcol,it:twoRS2flex:allCombinations} imply that $\delta_1$ and $\delta_2$
	are certificates for each other for all almost red-blue cycles. 
	By swapping $\tdelta_1$ with $\tdelta_2$ or taking conjugates,
	we can assume that $\tdelta(\bar{u}x)=(\blue,\blue)$
	and $\tdelta(x\tw_1)=(\blue,\red)$.
	Since $(\tw_2,\sigma x,\sigma \bar{u},\bar{u})$ is a path avoiding $(\blue,\blue)$,
	we have $a(\tw_2)=a(\sigma x)=a(\sigma \bar{u})=a(\bar{u})$,
	which we can assume to be $(0,0)$.
	Similarly, we have $b(\tw_2)=b(\sigma x)=b(\sigma \bar{u})=b(\bar{u})=b(x)$,
	which we again assume to be $(0,0)$.
	Hence, $p_{s,t}(\bar{u})=z(\bar{u})$ does not lie on the $y$-axis
	since the edge $\bar{u}\sigma\bar{u}$ has non-zero length.
	Therefore, we can set $z(\bar{u}):=(1,0)$.
	With this, $p_{s,t}(x)$ and $p_{s,t}(\tw_1)$ are as indicated in \Cref{fig:param}.
	We can choose $a(x)$ so that the length of the edge $\bar{u}x$ is $\|a(x)\|=2$, 
	and $b\tw_1$ so that the length of $x\tw_1$ is $\|b(\tw_1)\|=2$.
	Then for every $t$, there is one or two values of $s$ such that $p_{s,t}(\tw_1)$
	is on the $y$-axis. To get the desired function $s(t)$,
	we pick the values for $s(t)$ such that $p_{s(t),t}(\tw_1)$ is continuous.
\end{proof}

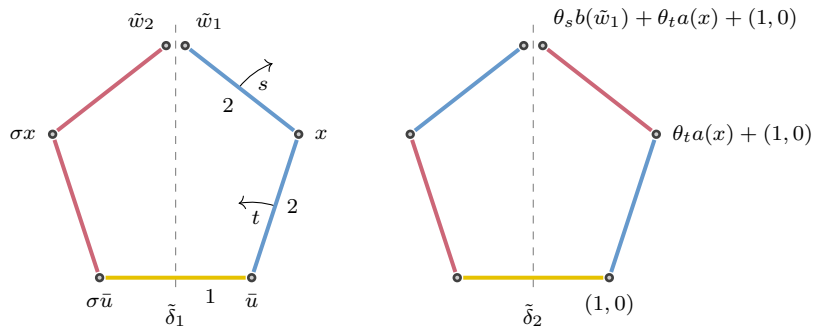
\begin{figure}[ht]
	\centering
	\begin{tikzpicture}[scale=0.5]
		\draw[sym] (0,-0.25)--(0,6.75) ;
		\coordinate (c) at (0.25,0);
		\node[fvertex,label={[labelsty]-90:$\sigma \bar u$}] (1l) at (-2,0) {};
		\node[fvertex,label={[labelsty]-90:$\bar u$}] (1r) at (2,0) {};
		\node[fvertex,label={[labelsty]180:$\sigma x$}] (2l) at ($(1l)+(108:4)$) {};
		\node[fvertex,label={[labelsty]0:$x$}] (2r) at ($(1r)+(72:4)$) {};
		\coordinate (w) at ($(2r)+(144:4)$);
		\node[fvertex,label={[labelsty]85:$\tilde w_1$}] (wr) at ($(w)+(c)$) {};
		\node[fvertex,label={[labelsty]95:$\tilde w_2$}] (wl) at ($(w)-(c)$) {};

		\coordinate (i1) at ($(1r)+(100:1)$);
		\draw[black] pic (wt) ["",draw,->,angle radius=1cm, angle eccentricity=0.8] {angle= 2r--1r--i1};
		\node[labelsty] at (wt) {$t$};
		\coordinate (i2) at ($(2r)+(110:1)$);
		\draw[black] pic (ws) ["",draw,<-,angle radius=1cm, angle eccentricity=0.8] {angle= i2--2r--wr};
		\node[labelsty] at (ws) {$s$};

		\draw[edge,colG] (1l) to node[below,colfg,labelsty,pos=0.75] {1} (1r);
		\draw[edge,colR] (1l)--(2l) (2r) (2l)--(wl);
		\draw[edge,colB] (1r) to node[right,colfg,labelsty] {2} (2r) to node[below left=-2pt,colfg,labelsty] {2} (wr);
		\node[labelsty] at (0,-1) {$\tilde \delta_1$};
	\end{tikzpicture}
	\qquad
	\begin{tikzpicture}[scale=0.5]
		\draw[sym] (0,-0.25)--(0,6.75);
		\coordinate (c) at (0.25,0);
		\node[fvertex] (1l) at (-2,0) {};
		\node[fvertex,label={[labelsty]-90:$(1,0)$}] (1r) at (2,0) {};
		\node[fvertex] (2l) at ($(1l)+(108:4)$) {};
		\node[fvertex,label={[labelsty]0:$\theta_t a(x) +(1,0)$}] (2r) at ($(1r)+(72:4)$) {};
		\coordinate (w) at ($(2r)+(144:4)$);
		\node[fvertex,label={[labelsty]85:$\theta_s b(\tilde w_1) + \theta_t a(x) + (1,0) $}] (wr) at ($(w)+(c)$) {};
		\node[fvertex] (wl) at ($(w)-(c)$) {};

		\draw[edge,colG] (1l)--(1r);
		\draw[edge,colB] (1r)--(2r) (2l)--(wl);
		\draw[edge,colR] (1l)--(2l) (2r)--(wr);
		\node[labelsty] at (0,-1) {$\tilde \delta_2$};
	\end{tikzpicture}
	\caption{The RS-colouring $\tilde \delta_1$ with vertex names and chosen lengths (left) and its certificate $\tilde \delta_2$ with parametrisations of the vertex coordinates (right).}
	\label{fig:param}
\end{figure}

\Cref{fig:colourcomb} shows that the additional conditions on $\delta_1$ and $\delta_2$
are required for the proof.

\begin{figure}[ht]
	\centering
	\begin{tikzpicture}[scale=0.8]
		\draw[sym] (0,-0.25)--(0,6.5);
		\node[gvertex] (1l) at (-2,0) {};
		\node[gvertex] (1r) at (2,0) {};
		\node[gvertex] (2l) at ($(1l)+(108:4)$) {};
		\node[gvertex] (2r) at ($(1r)+(72:4)$) {};
		\node[gvertex] (w) at ($(2r)+(144:4)$) {};

		\node[gvertex] (3l) at ($(w)+(-110:3)$) {};
		\node[gvertex] (3r) at ($(w)+(-70:3)$) {};

		\node[gvertex] (4l) at ($(3l)+(200:0.75)$) {};
		\node[gvertex] (4r) at ($(3r)+(-20:0.75)$) {};
		\node[gvertex] (5l) at ($(4l)+(180:0.5)$) {};
		\node[gvertex] (5r) at ($(4r)+(0:0.5)$) {};
		\node[gvertex] (6l) at ($(5l)+(140:0.5)$) {};
		\node[gvertex] (6r) at ($(5r)+(40:0.5)$) {};

		\node[gvertex] (8l) at (-1,1.2) {};
		\node[gvertex] (8r) at (1,1.2) {};
		\node[gvertex] (7l) at ($(8l)+(140:1.2)$) {};
		\node[gvertex] (7r) at ($(8r)+(40:1.2)$) {};
		\node[gvertex] (9l) at ($(8l)+(30:1.8)$) {};
		\node[gvertex] (9r) at ($(8r)+(150:1.8)$) {};
		\node[gvertex] (10l) at ($(9l)+(30:0.75)$) {};
		\node[gvertex] (10r) at ($(9r)+(150:0.75)$) {};

		\draw[edge,colG] (1l)--(1r);
		\draw[edge,colR] (1l)--(2l) (2r)--(w);
		\draw[edge,colB] (1r)--(2r) (2l)--(w);

		\draw[edge,colR] (3r)--(w) (3r)--(4r) (6r)--(2r) (3r)--(7r) (7r)--(8r) (3r)--(10l) (9l)--(10l) (4l)--(5l) (5l)--(6l);
		\draw[edge,colB] (3l)--(w) (3l)--(4l) (6l)--(2l) (3l)--(7l) (7l)--(8l) (3l)--(10r) (9r)--(10r) (4r)--(5r) (5r)--(6r);
		\draw[edge,colG] (8r)--(9r);
		\draw[edge,colG] (8l)--(9l);
	\end{tikzpicture}
	\qquad
	\begin{tikzpicture}[scale=0.8]
		\draw[sym] (0,-0.25)--(0,6.5);
		\node[gvertex] (1l) at (-2,0) {};
		\node[gvertex] (1r) at (2,0) {};
		\node[gvertex] (2l) at ($(1l)+(108:4)$) {};
		\node[gvertex] (2r) at ($(1r)+(72:4)$) {};
		\node[gvertex] (w) at ($(2r)+(144:4)$) {};

		\node[gvertex] (3l) at ($(w)+(-110:3)$) {};
		\node[gvertex] (3r) at ($(w)+(-70:3)$) {};

		\node[gvertex] (4l) at ($(3l)+(200:0.75)$) {};
		\node[gvertex] (4r) at ($(3r)+(-20:0.75)$) {};
		\node[gvertex] (5l) at ($(4l)+(180:0.5)$) {};
		\node[gvertex] (5r) at ($(4r)+(0:0.5)$) {};
		\node[gvertex] (6l) at ($(5l)+(140:0.5)$) {};
		\node[gvertex] (6r) at ($(5r)+(40:0.5)$) {};

		\node[gvertex] (8l) at (-1,1.2) {};
		\node[gvertex] (8r) at (1,1.2) {};
		\node[gvertex] (7l) at ($(8l)+(140:1.2)$) {};
		\node[gvertex] (7r) at ($(8r)+(40:1.2)$) {};
		\node[gvertex] (9l) at ($(8l)+(30:1.8)$) {};
		\node[gvertex] (9r) at ($(8r)+(150:1.8)$) {};
		\node[gvertex] (10l) at ($(9l)+(30:0.75)$) {};
		\node[gvertex] (10r) at ($(9r)+(150:0.75)$) {};

		\draw[edge,colG] (1l)--(1r);
		\draw[edge,colR] (1r)--(2r) (2r)--(w);
		\draw[edge,colB] (1l)--(2l) (2l)--(w);

		\draw[edge,colR] (3r)--(w) (3l)--(4l) (4r)--(5r) (5r)--(6r) (6l)--(2l) (3l)--(7l) (7r)--(8r) (3r)--(10l) (9l)--(10l) ;
		\draw[edge,colB] (3l)--(w) (3r)--(4r) (4l)--(5l) (5l)--(6l) (6r)--(2r) (3r)--(7r) (7l)--(8l) (3l)--(10r) (9r)--(10r) ;
		\draw[edge,colG] (8r)--(9r);
		\draw[edge,colG] (8l)--(9l);
	\end{tikzpicture}
	\caption{An example where condition~\cref{it:twoRS2flex:allCombinations} does not hold.
	The construction of a flex in the proof of \Cref{thm:twoRS2flex}
	gives the edges incident to the invariant edge zero length, which is not allowed.}
	\label{fig:colourcomb}
\end{figure}

\section{Frameworks consisting of triangles and parallelograms}\label{sec:ptp}
Frameworks formed by triangles and parallelograms are studied in~\cite{Bracing,TPframe}.
Their flexibility is characterised using special types of NAC-colourings.
In this section we show how the flexibility of the reflection-symmetric ones relates to a special type of RS-colourings.
We start by recalling the defining properties of the frameworks under consideration. 
\begin{definition}
    \label{def:APclass}
	Let $G$ be a connected graph. Consider the relation $\Trel$ on the set of edges, where
	two edges are in relation $\Trel$ if they are in a 3-cycle subgraph of $G$.
	Two edges are in relation $\Prel$ if they are opposite edges of a 4-cycle subgraph of $G$.
	An equivalence class of the reflexive-transitive closure
	of the union $\Trel \cup \Prel$ is called an \emph{angle-preserving class}.
\end{definition}
For a walk $W=(u_1,\ldots,u_k)$ and an angle-preserving class $r$,
the notation $\sum_{\oriented{u}{v} \in r\cap W}$ means
we sum up over all edges $(u_i,u_{i+1})$ with $1\leq i <k$ such that $u_i u_{i+1}\in r$.
\begin{definition}
    We say that a framework $(G,p)$ is \emph{walk-independent} if $p$ is an injective realisation where each induced 4-cycle in $G$ forms a non-degenerate\footnote{Here by non-degenerate we mean that not all of its vertices are collinear.}
	parallelogram in $p$, and for every angle-preserving class $r$ we have
    \begin{equation*}
		\sum_{\oriented{u}{v} \in r\cap W} (p(v)-p(u)) = \sum_{\oriented{u}{v} \in r\cap W'} (p(v)-p(u))
	\end{equation*}
	for every $w_1,w_2\in V_G$  and for all walks $W,W'$ in $G$ from $w_1$ to $w_2$.
\end{definition}
\Cref{fig:walkindep} illustrates the definition of walk-independence by showing the sum of vectors of an angle-preserving class for a given walk.

\begin{figure}[ht]
	\centering
	\begin{tikzpicture}[scale=0.8]
		\pgfmathparse{70}\let\w=\pgfmathresult
		\begin{scope}
			\draw[sym] (0,-0.2) -- (0,2.3);
			\coordinate (a1) at (\w:1.25);
			\coordinate (p1) at ($(0,0)!(a1)!(0,1)$);
			\coordinate (sa1) at ($(p1)+(p1)-(a1)$);
			\coordinate (d1) at (0,0.75);
			\coordinate (dt1) at (0.5,0);
			\node[fvertex] (1) at ($(0,0)-(dt1)$) {};
			\node[fvertex] (2) at (dt1) {};
			\node[fvertex] (3) at (d1) {};
			\node[fvertex] (4) at ($(sa1)-(dt1)$) {};
			\node[fvertex] (5) at ($(sa1)+(d1)$) {}; {};
			\node[fvertex] (6) at ($(a1)+(sa1)+(d1)$) {}; {};
			\node[fvertex] (7) at ($(a1)+(d1)$) {}; {};
			\node[fvertex] (8) at ($(a1)+(dt1)$) {};
			\draw[hedge2,colG] (1)edge(2);
			\draw[hedge2,colG] (1)edge(3);
			\draw[ledge,colB] (1)edge(4);
			\draw[ledge,colG] (2)edge(3);
			\draw[hedge2,colR] (2)edge(8);
			\draw[hedge2,colB] (3)edge(5);
			\draw[ledge,colR] (3)edge(7);
			\draw[ledge,colG] (4)edge(5);
			\draw[hedge2,colR] (5)edge(6);
			\draw[hedge2,colB] (6)edge(7);
			\draw[hedge2,colG] (7)edge(8);
		\end{scope}
		\begin{scope}[xshift=5cm]
			\draw[sym] (0,-0.2) -- (0,2.3);
			\coordinate (a1) at (\w:1.25);
			\coordinate (p1) at ($(0,0)!(a1)!(0,1)$);
			\coordinate (sa1) at ($(p1)+(p1)-(a1)$);
			\coordinate (d1) at (0,0.75);
			\coordinate (dt1) at (0.5,0);
			\node[fvertex] (1) at ($(0,0)-(dt1)$) {};
			\node[fvertex] (2) at (dt1) {};
			\node[fvertex] (3) at (d1) {};
			\node[fvertex] (4) at ($(sa1)-(dt1)$) {};
			\node[fvertex] (5) at ($(sa1)+(d1)$) {}; {};
			\node[fvertex] (6) at ($(a1)+(sa1)+(d1)$) {}; {};
			\node[fvertex] (7) at ($(a1)+(d1)$) {}; {};
			\node[fvertex] (8) at ($(a1)+(dt1)$) {};
			\draw[dedge,colG] (2)edge(1);
			\draw[dedge,colG] (1)edge(3);
			\draw[dedge,colR] (8)edge(2);
			\draw[dedge,colB] (3)edge(5);
			\draw[dedge,colR] (5)edge(6);
			\draw[dedge,colB] (6)edge(7);
			\draw[dedge,colG] (7)edge(8);
		\end{scope}
		\begin{scope}[xshift=10cm,yshift=1cm]
			\coordinate (a1) at (\w:1.25);
			\coordinate (p1) at ($(0,0)!(a1)!(0,1)$);
			\coordinate (sa1) at ($(p1)+(p1)-(a1)$);
			\coordinate (d1) at (0,0.75);
			\coordinate (dt1) at (0.5,0);
			\coordinate (1) at ($(0,0)-(dt1)$);
			\coordinate (2) at (dt1);
			\coordinate (3) at (d1);
			\coordinate (4) at ($(sa1)-(dt1)$);
			\coordinate (5) at ($(sa1)+(d1)$);;
			\coordinate (6) at ($(a1)+(sa1)+(d1)$);;
			\coordinate (7) at ($(a1)+(d1)$);;
			\coordinate (8) at ($(a1)+(dt1)$);
			\draw[axes] (0,-1.3)--(0,1.3);
			\draw[axes] (-1.25,0)--(1,0);
			\draw[dedge,colB] (0,0)--($(5)-(3)$);
			\draw[dedge,colB] ($(5)-(3)$)--++($(7)-(6)$);
			\draw[dedge,colG] (0,0)--($(1)-(2)$);
			\draw[dedge,colG] ($(1)-(2)$)--++($(3)-(1)$);
			\draw[dedge,colG] ($(1)-(2)$)++($(3)-(1)$)--++($(8)-(7)$);
			\draw[dedge,colR] (0,0)--($(2)-(8)$);
			\draw[dedge,colR] ($(2)-(8)$)--++($(6)-(5)$);
		\end{scope}
	\end{tikzpicture}
	\caption{A reflection-symmetric framework with a closed walk indicated in bold (left); the directed walk with colours of angle-preserving classes (middle); the vectors of the edges from the walk for each angle-preserving class (right). One can see that they sum up to zero in this example.}
	\label{fig:walkindep}
\end{figure}
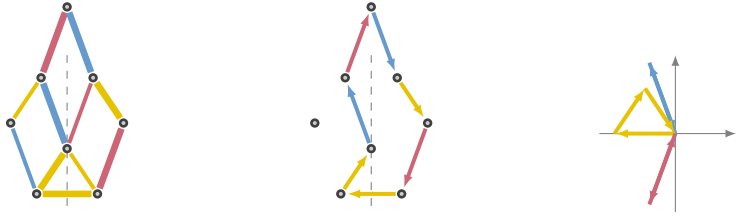

The type of RS-colourings needed in this section require extra conditions on path colours.

\begin{definition}
	Let $G$ be a reflection-symmetric graph.
	A pseudo-RS-colouring is called \emph{Cartesian}
	if there exists no distinct pair of vertices $v,w \in V_G$ and triple of paths $P_{\mathrm{rb}},P_{\mathrm{rg}},P_{\mathrm{bg}}$  where the following holds: (i) each path starts at $v$ and ends at $w$, (ii) the edges of $P_{\mathrm{rb}}$ are red or blue, (iii) the
	edges of $P_{\mathrm{rg}}$ are red or gold, and (iv) edges of $P_{\mathrm{bg}}$ are blue or gold.
\end{definition}
Notice that in the definition, a red path would count for both $P_{\mathrm{rb}}$ and $P_{\mathrm{rg}}$.
From the RS-colourings in \Cref{fig:pseudoRScol}, only the second one is Cartesian.
\Cref{fig:cartesian} illustrates some examples and non-examples of Cartesian RS-colourings.

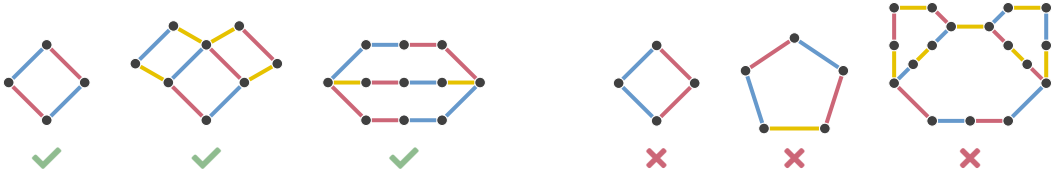
\begin{figure}[ht]
		\centering
		\begin{tikzpicture}
			\node[gvertex] (1) at (180:0.5) {};
			\node[gvertex] (2) at (-90:0.5) {};
			\node[gvertex] (3) at (0:0.5) {};
			\node[gvertex] (4) at (90:0.5) {};
			\draw[edge,colR] (1)--(2) (3)--(4);
			\draw[edge,colB] (2)--(3) (4)--(1);
			\node[colGr] at ($(2)+(0,-0.5)$) {\faCheck};
		\end{tikzpicture}
		\quad
		\begin{tikzpicture}
			\node[gvertex] (1) at (180:0.5) {};
			\node[gvertex] (2) at (-90:0.5) {};
			\node[gvertex] (3) at (0:0.5) {};
			\node[gvertex] (4) at (90:0.5) {};
			\node[gvertex] (5) at ($(3)+(30:0.5)$) {};
			\node[gvertex] (6) at ($(4)+(30:0.5)$) {};
			\node[gvertex] (7) at ($(1)+(150:0.5)$) {};
			\node[gvertex] (8) at ($(4)+(150:0.5)$) {};
			\draw[edge,colR] (1)--(2) (3)--(4) (5)--(6);
			\draw[edge,colB] (2)--(3) (4)--(1) (7)--(8);
			\draw[edge,colG] (3)--(5) (4)--(6) (1)--(7) (4)--(8);
			\node[colGr] at ($(2)+(0,-0.5)$) {\faCheck};
		\end{tikzpicture}
		\quad
		\begin{tikzpicture}
			\node[gvertex] (1) at (0,0) {};
			\node[gvertex] (2a) at (0.5,-0.5) {};
			\node[gvertex] (2b) at (0.5,0) {};
			\node[gvertex] (2c) at (0.5,0.5) {};
			\node[gvertex] (3a) at (1,-0.5) {};
			\node[gvertex] (3b) at (1,0) {};
			\node[gvertex] (3c) at (1,0.5) {};
			\node[gvertex] (4a) at (1.5,-0.5) {};
			\node[gvertex] (4b) at (1.5,0) {};
			\node[gvertex] (4c) at (1.5,0.5) {};
			\node[gvertex] (5) at (2,0) {};
			\draw[edge,colR] (1)--(2a) (2a)--(3a) (2b)--(3b) (3c)--(4c) (4c)--(5);
			\draw[edge,colB] (3a)--(4a) (4a)--(5) (3b)--(4b) (1)--(2c) (2c)--(3c);
			\draw[edge,colG] (1)--(2b) (4b)--(5);
			\node[colGr] at ($(3a)+(0,-0.5)$) {\faCheck};
		\end{tikzpicture}
		\qquad\qquad
		\begin{tikzpicture}
			\node[gvertex] (1) at (-90:0.5) {};
			\node[gvertex] (2) at (0:0.5) {};
			\node[gvertex] (3) at (90:0.5) {};
			\node[gvertex] (4) at (180:0.5) {};
			\draw[edge,colR] (1)--(2) (2)--(3);
			\draw[edge,colB] (3)--(4) (4)--(1);
			\node[colR] at ($(1)+(0,-0.5)$) {\faTimes};
		\end{tikzpicture}
		\quad
		\begin{tikzpicture}[scale=0.8]
			\node[gvertex] (1) at (-0.5,0) {};
			\node[gvertex] (2) at (0.5,0) {};
			\node[gvertex] (3) at (50:1.25) {};
			\node[gvertex] (4) at (130:1.25) {};
			\node[gvertex] (5) at (0,1.5) {};
			\draw[edge,colR] (2)--(3) (5)--(4);
			\draw[edge,colB] (3)--(5) (4)--(1);
			\draw[edge,colG] (1)--(2);
			\node[colR] at (0,-0.5) {\faTimes};
		\end{tikzpicture}
		\quad
		\begin{tikzpicture}
			\node[gvertex] (3bl) at (-1,0) {};
			\node[gvertex] (2bl) at (-0.5,-0.5) {};
			\node[gvertex] (1b) at (0,-0.5) {};
			\node[gvertex] (2br) at (0.5,-0.5) {};
			\node[gvertex] (3br) at (1,0) {};

			\node[gvertex] (4tl) at ($(3bl)+(0,0.5)$) {};
			\node[gvertex] (3tl) at ($(4tl)+(0,0.5)$) {};
			\node[gvertex] (2tl) at ($(3tl)+(0.5,0)$) {};
			\node[gvertex] (1tl) at (-0.25,0.75) {};

			\node[gvertex] (4tr) at ($(3br)+(0,0.5)$) {};
			\node[gvertex] (3tr) at ($(4tr)+(0,0.5)$) {};
			\node[gvertex] (2tr) at ($(3tr)+(-0.5,0)$) {};
			\node[gvertex] (1tr) at (0.25,0.75) {};

			\node[gvertex] (4ml) at (-0.75,0.25) {};
			\node[gvertex] (3ml) at (-0.5,0.5) {};
			\node[gvertex] (3mr) at (0.5,0.5) {};
			\node[gvertex] (4mr) at (0.75,0.25) {};

			\draw[edge,colR] (3bl)--(2bl) (1b)--(2br);
			\draw[edge,colB] (2bl)--(1b) (2br)--(3br);
			\draw[edge,colG] (1tl)--(1tr);

			\draw[edge,colR] (1tl)--(2tl) (3tl)--(4tl);
			\draw[edge,colB] (1tr)--(2tr) (3tr)--(4tr);
			\draw[edge,colG] (2tl)--(3tl) (4tl)--(3bl) (2tr)--(3tr) (4tr)--(3br);

			\draw[edge,colG] (3ml)--(4ml);
			\draw[edge,colG] (3mr)--(4mr);
			\draw[edge,colB] (1tl)--(3ml) (4ml)--(3bl);
			\draw[edge,colR] (1tr)--(3mr) (4mr)--(3br);
			\node[colR] at ($(1b)+(0,-0.5)$) {\faTimes};
		\end{tikzpicture}
		\caption{Edge colourings that are and are not Cartesian RS-colourings.}
		\label{fig:cartesian}
	\end{figure}

It follows from the definition that Cartesian pseudo-RS-colourings do not have almost red-blue cycles,
and so all Cartesian pseudo-RS-colourings are RS-colourings.
One can show that the reflection-symmetric flex constructed in \Cref{thm:noAlmostRBcycle2flex}
has an injective realisation if and only if the used RS-colouring is Cartesian.

We now describe how Cartesian RS-colourings of walk-independent frameworks
relate to angle-preserving classes.

\begin{lemma}
	\label{lem:cartesianRSiffAPCmonochromatic}
	Let $\delta$ be an edge colouring by red, blue and gold
	of a reflection-symmetric walk-independent framework $(G,p)$ with reflection $\sigma$.
	Then $\delta$ is a Cartesian RS-colouring if and only if both red and blue colours occur,
	each invariant angle-preserving class is gold,
	and for every non-invariant angle-preserving class $r$
	either both $r$ and $\sigma r$ are gold, or one is blue and the other one is red.
\end{lemma}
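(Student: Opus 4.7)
\emph{Plan.} My plan is to reduce both directions to the NAC/APC correspondence known for walk-independent frameworks (established in~\cite{Bracing,TPframe}): a $\{\red,\blue\}$-colouring of such a framework is a NAC-colouring if and only if both colours are used and each angle-preserving class is monochromatic.

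For the forward direction, I suppose $\delta$ is a Cartesian RS-colouring. Conditions~\Cref{it:gold2blue,it:gold2red} of \Cref{def:pseudoRScol} say that switching gold to blue (respectively, gold to red) produces a NAC-colouring. By the correspondence, each APC is monochromatic in both derived colourings, so in $\delta$ itself each APC $r$ must have all edges coloured from one of $\{\red\}$, $\{\blue\}$, or $\{\gold\}$: a class carrying two of the three colours would force at least one of the two switches to violate monochromaticity. The symmetry condition~\Cref{it:symmetryRedBlue}, together with the remark that $\sigma$ preserves gold, then pins down the structure: if $r$ is invariant, then any red edge in $r$ would be mapped by $\sigma$ to a blue edge in $r$, breaking monochromaticity, so $r$ must be all gold; if $r$ is non-invariant, the APC $\sigma r$ inherits the ``opposite'' colour, yielding either a red/blue pair or a gold/gold pair. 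That both red and blue appear is~\Cref{it:containsRedBlue}.

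For the backward direction, I assume the three bullet conditions. Switching gold to blue yields a $\{\red,\blue\}$-colouring that is APC-monochromatic (red APCs stay red, blue APCs stay blue, gold APCs become blue); both colours occur because red and blue are assumed to appear in $\delta$. By the correspondence, this is a NAC-colouring, and symmetrically for gold-to-red. The symmetry condition~\Cref{it:symmetryRedBlue} and condition~\Cref{it:containsRedBlue} follow directly from the hypothesis, so $\delta$ is a pseudo-RS-colouring. To verify the Cartesian property, I suppose for contradiction that there exist distinct $v,w\in V_G$ together with paths $P_{\mathrm{rb}}, P_{\mathrm{rg}}, P_{\mathrm{bg}}$ from $v$ to $w$ avoiding $\gold$, $\blue$, and $\red$ respectively. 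Walk-independence assigns to each APC $r$ a well-defined displacement
\begin{equation*}
	\vec{s}_r(v,w) := \sum_{\oriented{u}{u'} \in r\cap W}\bigl(p(u') - p(u)\bigr),
\end{equation*}
independent of the walk $W$ from $v$ to $w$. Because each APC is monochromatic in $\delta$, $P_{\mathrm{rb}}$ forces $\vec{s}_r(v,w) = \vec{0}$ for every gold APC $r$, $P_{\mathrm{rg}}$ does so for every blue APC, and $P_{\mathrm{bg}}$ for every red APC. Summing over all APCs yields $p(w)-p(v) = \vec{0}$, contradicting injectivity of $p$.

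The main subtlety is to ensure that the NAC/APC correspondence from~\cite{Bracing,TPframe} is invoked in a form compatible with this setting; if a direct reference in that shape is not available, a short preliminary lemma can be inserted, using monochromaticity of triangles in any NAC-colouring and walk-independence applied to closed walks around induced $4$-cycles to rule out the ``opposite edges differently coloured'' pattern on parallelograms.
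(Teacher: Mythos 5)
Your backward direction is essentially correct. The implication ``each angle-preserving class monochromatic and both red and blue present $\Rightarrow$ the two gold-replacement colourings are NAC-colourings'' is the true half of the correspondence you cite (the paper derives it from the fact that angle-preserving classes are edge cuts), and your verification of the Cartesian property --- summing the walk-independent per-class displacements $\vec{s}_r(v,w)$ over all classes, using each of the three paths to kill the classes of one colour, and concluding $p(w)=p(v)$ against injectivity --- is a valid and rather clean alternative to the paper's appeal to the separation property of angle-preserving classes from \cite{TPframe}.

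The forward direction, however, has a genuine gap: the correspondence you invoke is false in the direction you need. A NAC-colouring of a walk-independent framework need \emph{not} have monochromatic angle-preserving classes. The $4$-cycle realised as a parallelogram, with two adjacent red edges and two adjacent blue edges (the first non-example in \Cref{fig:cartesian}), is a NAC-colouring --- indeed a pseudo-RS-colouring, and even an RS-colouring, for the reflection fixing the two corner vertices --- of a walk-independent framework, yet each of its two angle-preserving classes (the pairs of opposite edges) contains one red and one blue edge. So conditions \cref{it:gold2blue,it:gold2red} of \Cref{def:pseudoRScol} alone cannot give monochromaticity of the classes, and your argument never invokes the Cartesian hypothesis, which is exactly the missing ingredient. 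The paper uses it at precisely this point: a pseudo-RS-colouring restricted to a $4$-cycle can only be monochromatic or $2$--$2$, and the $2$--$2$ pattern with \emph{adjacent} equal colours yields two monochromatic paths of distinct colours between a pair of vertices, which the Cartesian condition forbids; hence opposite edges agree and the classes are monochromatic. Your proposed fallback (walk-independence applied to closed walks around induced $4$-cycles) cannot repair this, since the offending colouring is perfectly compatible with the parallelogram identity on the edge vectors --- only the Cartesian condition excludes it.
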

\begin{proof}
	Suppose that $\delta$ is a Cartesian RS-colouring.
	We only need to show that the angle preserving classes are monochromatic;
	the rest then follows from the properties of pseudo-RS-colourings.
	Since 3-cycles are always monochromatic in an RS-colouring, edges in $\Trel$ relation have the same colour.
	Since at most two colours can occur in a 4-cycle and no two vertices are connected
	by two monochromatic paths of distinct colour simultaneously,
	every 4-cycle is either monochromatic or the opposite edges have the same colour.
	Hence, the angle preserving classes are monochromatic.
	
	For the opposite direction,
	clearly, \cref{it:containsRedBlue,it:symmetryRedBlue} in \Cref{def:pseudoRScol} hold.
	Since angle-preserving classes are edge cuts by~\cite[Lemma~1.5]{TPframe} and monochromatic by assumption,
	if a colour occurs in a cycle, it must occur at least twice.
	Therefore, changing all gold edges to red, resp.\ blue, yields a NAC-colouring.
	This also gives that there is no almost red-blue cycle,
	namely, $\delta$ is indeed an RS-colouring.
	By \cite[Lemma~1.5]{TPframe}, every two vertices are separated by an angle-preserving class,
	which is monochromatic. Hence, $\delta$ is Cartesian. 
\end{proof}
	
\begin{theorem}
	\label{thm:flexibleIffCartRScolouring}
	Let $(G,p)$ be a reflection-symmetric walk-independent framework.
	The following are equivalent:
	\begin{enumerate}
	  \item\label{it:RSflexible} $(G,p)$ is reflection-symmetric flexible,
	  \item\label{it:cartRS} $G$ has a Cartesian RS-colouring,
	  \item\label{it:noninvAPC} $G$ has a non-invariant angle-preserving class.
	\end{enumerate}
\end{theorem}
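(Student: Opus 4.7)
The plan is to prove the cycle (iii)~$\Rightarrow$~(ii)~$\Rightarrow$~(i)~$\Rightarrow$~(iii). The equivalence (ii)~$\Leftrightarrow$~(iii) will drop out directly from the characterisation of Cartesian RS-colourings in Lemma~\ref{lem:cartesianRSiffAPCmonochromatic}, while the implications involving (i) rely on the structural fact (from~\cite{Bracing,TPframe}) that flexes of walk-independent frameworks rotate each angle-preserving class as a rigid block. For (iii)~$\Rightarrow$~(ii), given a non-invariant class $r$, I colour the edges of $r$ red, those of $\sigma r$ blue, and every remaining edge gold. This keeps each angle-preserving class monochromatic, makes all invariant classes gold (since they are neither $r$ nor $\sigma r$), and pairs $r$ with $\sigma r$ in the required red/blue fashion, so Lemma~\ref{lem:cartesianRSiffAPCmonochromatic} certifies the result is a Cartesian RS-colouring. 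Conversely, any Cartesian RS-colouring uses red by condition~\cref{it:containsRedBlue}, and Lemma~\ref{lem:cartesianRSiffAPCmonochromatic} forbids red edges in invariant classes, so the class of any red edge is non-invariant.

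For (iii)~$\Rightarrow$~(i), I will construct the flex directly. Fix a non-invariant class $r$ and an invariant vertex $v_0$. For each $t \in [0,2\pi]$, set $p_t(v_0) := p(v_0)$ and extend along an arbitrary walk from $v_0$ by summing, for each traversed edge $(u,v)$, the contribution $\rott(p(v)-p(u))$ if $uv \in r$, $\rotmt(p(v)-p(u))$ if $uv \in \sigma r$, and $p(v)-p(u)$ otherwise. The walk-independence hypothesis guarantees this is well-defined: the class-by-class zero-sum condition on closed walks survives the transformation, because rotating every vector of a single class by a common angle preserves that sum. Edge lengths are immediate from the construction; reflection symmetry of $p_t$ follows from $\rott\tau = \tau\rotmt$ together with $\sigma r \neq r$; and the flex is non-trivial because the angle between any edge of $r$ and any edge outside $r \cup \sigma r$ changes with $t$ (and if no such third class exists, an edge $e \in r$ and $\sigma e \in \sigma r$ already undergo a mutual angle change of $2t$).

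For (i)~$\Rightarrow$~(iii), I argue by contraposition: suppose every angle-preserving class is invariant, and let $(p_t)$ be any reflection-symmetric flex of $(G,p)$. The parametrisation of walk-independent frameworks from~\cite{Bracing,TPframe} implies that all edges of each class $r$ are rotated by a common angle $\theta_r(t)$ along the flex, so $p_t(v)-p_t(u) = \rot{\theta_r(t)}(p(v)-p(u))$ for every $uv \in r$. Reflection symmetry of $p_t$ combined with $\sigma r = r$ yields, for each edge $uv \in r$, the identity $\rot{\theta_r(t)}\tau(p(v)-p(u)) = \tau\rot{\theta_r(t)}(p(v)-p(u))$; using $\tau\rot{\theta} = \rot{-\theta}\tau$, this forces $\rot{2\theta_r(t)}$ to fix every edge vector of $r$, and continuity together with $\theta_r(0)=0$ gives $\theta_r(t)\equiv 0$. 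Hence $(p_t)$ is trivial, contradicting (i).

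The main obstacle is the step (i)~$\Rightarrow$~(iii): the conclusion that each class rotates as a rigid block along any flex is a nontrivial property of walk-independent frameworks and must be either cited carefully from~\cite{Bracing,TPframe} or, if necessary, re-derived in the reflection-symmetric setting by verifying that walk-independence persists along the flex (triangles stay triangles, induced $4$-cycles stay parallelograms) and appealing to the resulting angle parametrisation. Modulo this structural fact, the remaining steps are essentially bookkeeping applications of Lemma~\ref{lem:cartesianRSiffAPCmonochromatic} and the identity $\tau\rot{\theta} = \rot{-\theta}\tau$.
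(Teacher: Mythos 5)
Your argument is correct in substance but organised differently from the paper's, and it has one concrete oversight. The paper proves the cycle (iii)$\Rightarrow$(ii)$\Rightarrow$(i)$\Rightarrow$(iii): for (ii)$\Rightarrow$(i) it reuses the grid construction of \Cref{thm:noAlmostRBcycle2flex}, building $a$ and $z$ from the colour-restricted walk sums $p_\blue,p_\red,p_\gold$, and for (i)$\Rightarrow$(iii) it argues directly that some edge must change its angle with the mirror line, hence changes its angle with its $\sigma$-image, hence lies in a non-invariant class. You instead settle (ii)$\Leftrightarrow$(iii) purely combinatorially from \Cref{lem:cartesianRSiffAPCmonochromatic} and attach (i) to (iii) on both sides. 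Your flex for (iii)$\Rightarrow$(i) --- rotate the chosen class by $\rott$, its $\sigma$-image by $\rotmt$, fix all other classes --- is exactly the paper's construction stripped of the colouring language, and is if anything cleaner: preservation of edge lengths and non-coincidence of adjacent vertices are immediate, whereas the paper must argue them. Your (i)$\Rightarrow$(iii) is the contrapositive of the paper's direct argument; both versions lean on the same external structural input from \cite{Bracing,TPframe}, namely that edges in one angle-preserving class keep their mutual angles along a flex of a walk-independent framework, which you rightly flag as the point needing a careful citation.

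The oversight is in (iii)$\Rightarrow$(i): you anchor the walk sums at an invariant vertex $v_0$. A reflection $\sigma$ need not fix any vertex (consider a single parallelogram placed symmetrically about the $y$-axis with no vertex on it), so $v_0$ may not exist, and reflection-symmetry of your $p_t$ genuinely requires a $\tau$-fixed anchor. This must be patched, for instance as the paper does: base the sums at an arbitrary vertex $\bar u$ with $p(\bar u)$ on the $x$-axis (achievable by translating along the mirror line, which preserves reflection-symmetry) and subtract the correction terms $\tfrac12 p_\blue(\sigma\bar u)$, $\tfrac12 p_\red(\sigma\bar u)$, $\tfrac12 p_\gold(\sigma\bar u)$, whose $\tau$-equivariance is what restores $p_t(\sigma v)=\tau p_t(v)$ in general. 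With that repair your proof goes through.
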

\begin{proof}
	\cref{it:noninvAPC}$\implies$\cref{it:cartRS}: Colour a non-invariant angle-preserving class red,
	its image under $\sigma$ blue and the rest gold.
	This gives a Cartesian RS-colouring 
	by \Cref{lem:cartesianRSiffAPCmonochromatic}.
	
	\cref{it:RSflexible}$\implies$\cref{it:noninvAPC}:
	By \cite[Proposition 1.2]{TPframe}, there are at least two angle-preserving classes.
	There must be an edge $e$ whose angle with the $y$-axis changes along the flex,
	otherwise the flex is trivial.
	Hence, the angle between $e$ and $\sigma e$ changes as well,
	thus, they cannot be in the same angle-preserving class (edges in $\Trel$ or $\Prel$ keep their mutual angles),
	namely, the angle-preserving class of $e$ is not invariant.

	\cref{it:cartRS}$\implies$\cref{it:RSflexible}:
	We use the construction from \Cref{thm:noAlmostRBcycle2flex},
	with points $a_i,d_i$ tailored to obtain a flex starting at $p$.
	Let $\delta$ be a Cartesian RS-colouring of $G$.
	We fix a vertex $\bar{u}\in V_G$ and assume that $p(\bar{u})$ is on the $x$-axis.
	For $v\in V_G$ and a path $W$ from $\bar{u}$ to $v$, we define 
	\begin{equation*}
		p_\red(v) := \hspace{-0.2cm}\sum_{\substack{\oriented{u_1}{u_2} \in W\\ \delta(u_1u_2)=\red}}\hspace{-0.2cm} (p(u_2)-p(u_1))\,.
	\end{equation*}
	As the angle-preserving classes are monochromatic (\Cref{lem:cartesianRSiffAPCmonochromatic})
	and $(G,p)$ is walk-independent, $p_\red(v)$ is independent of the choice of $W$.
	Analogously we define $p_\blue(v)$ and $p_\gold(v)$.
	
	We need to show a few identities.
	Let $\bar{W}$ be a path from $\bar{u}$ to $\sigma\bar{u}$
	and $W$ from $\bar{u}$ to~$v$. 
	Since $\sigma W$ is a path from $\sigma\bar{u}$ to $\sigma v$
	and $\bar{W}$ concatenated with $\sigma W$ is from $\bar{u}$ to~$\sigma v$ (illustrated in \Cref{fig:flexibleIffCartRScolouring:paths}),
	we have
	\begin{align}
		\label{eq:pbluesigmav}
		p_\blue(\sigma v) 
			&= \hspace{-0.2cm}\sum_{\substack{\oriented{u_1}{u_2} \in \bar{W}\\ \delta(u_1u_2)=\blue}}\hspace{-0.2cm} (p(u_2)-p(u_1))
				+ \hspace{-0.2cm}\sum_{\substack{\oriented{\sigma u_1}{\sigma u_2} \in \sigma W\\ \delta(\sigma u_1\sigma u_2)=\blue}}\hspace{-0.2cm} (p(\sigma u_2)-p(\sigma u_1)) \nonumber \\ 
			&= p_\blue(\sigma \bar{u})
				+ \hspace{-0.2cm}\sum_{\substack{\oriented{u_1}{u_2} \in W\\ \delta(u_1u_2)=\red}}\hspace{-0.2cm} \tau(p(u_2)-p(u_1))
			 = p_\blue(\sigma \bar{u}) + \tau p_\red(v)\,.
	\end{align}

	\begin{figure}[ht]
		\centering
		\begin{tikzpicture}[ifvertex/.style={fvertex,minimum size=5pt}]
			\node[ifvertex,label={[labelsty]0:$\bar u$}] (u) at (2,0) {};
			\node[ifvertex,label={[labelsty]180:$\sigma \bar u$}] (su) at (-2,0) {};
			\node[ifvertex,label={[labelsty]0:$v$}] (v) at (2,2.5) {};
			\node[ifvertex,label={[labelsty]180:$\sigma v$}] (sv) at (-2,2.5) {};

			\begin{scope}[opacity=0.15,transparency group]
				\node[fvertex] (sbw1) at (-1.5,-0.5) {};
				\node[fvertex] (sbw2) at ($(sbw1)+(10:0.6)$) {};
				\node[fvertex] (sbw3) at (0.2,0.4) {};
				\node[fvertex] (sbw4) at ($(sbw3)+(30:0.6)$) {};
				\node[fvertex] (sbw5) at (1.2,0.6) {};
				\node[fvertex] (sbw6) at ($(sbw5)+(-50:0.6)$) {};

				\draw[dedge,colR] (sbw1)--(sbw2);
				\draw[dedge,colR] (sbw3)--(sbw4);
				\draw[dedge,colR] (sbw5)--(sbw6);
				\draw[path] (su) to[out=-90,in=190] (sbw1);
				\draw[path] (sbw2) to[out=10,in=210] node[labelsty,below=2pt] {$\sigma \bar W$} (sbw3);
				\draw[path] (sbw4) to[out=30,in=130] (sbw5);
				\draw[path] (sbw6) to[out=-50,in=180] (u);
			\end{scope}
			\begin{scope}
				\node[fvertex] (bw1) at (1.5,-0.5) {};
				\node[fvertex] (bw2) at ($(bw1)+(170:0.6)$) {};
				\node[fvertex] (bw3) at (-0.2,0.4) {};
				\node[fvertex] (bw4) at ($(bw3)+(150:0.6)$) {};
				\node[fvertex] (bw5) at (-1.2,0.6) {};
				\node[fvertex] (bw6) at ($(bw5)+(230:0.6)$) {};

				\draw[dedge,colB] (bw1)--(bw2);
				\draw[dedge,colB] (bw3)--(bw4);
				\draw[dedge,colB] (bw5)--(bw6);
				\draw[path] (u) to[out=-90,in=-10] (bw1);
				\draw[path] (bw2) to[out=170,in=-30] node[labelsty,below=2pt] {$\bar W$} (bw3);
				\draw[path] (bw4) to[out=150,in=50] (bw5);
				\draw[path] (bw6) to[out=230,in=0] (su);
			\end{scope}
			\begin{scope}
				\node[fvertex] (w1) at (1.95,0.75) {};
				\node[fvertex] (w2) at ($(w1)+(140:0.6)$) {};
				\node[fvertex] (w3) at (1.4,1.75) {};
				\node[fvertex] (w4) at ($(w3)+(70:0.6)$) {};

				\draw[dedge,colR] (w1)--(w2);
				\draw[dedge,colR] (w3)--(w4);
				\draw[path] (u) to[out=90,in=-40] (w1);
				\draw[path] (w2) to[out=140,in=-110] node[labelsty,right=4pt] {$W$} (w3);
				\draw[path] (w4) to[out=70,in=180] (v);
			\end{scope}
			\begin{scope}
				\node[fvertex] (sw1) at (-1.95,0.75) {};
				\node[fvertex] (sw2) at ($(sw1)+(40:0.6)$) {};
				\node[fvertex] (sw3) at (-1.4,1.75) {};
				\node[fvertex] (sw4) at ($(sw3)+(110:0.6)$) {};

				\draw[dedge,colB] (sw1)--(sw2);
				\draw[dedge,colB] (sw3)--(sw4);
				\draw[path] (su) to[out=90,in=220] (sw1);
				\draw[path] (sw2) to[out=40,in=-70] node[labelsty,left=4pt] {$\sigma W$} (sw3);
				\draw[path] (sw4) to[out=110,in=0] (sv);
			\end{scope}
		\end{tikzpicture}
		\caption{Illustration of the paths and the vectors we are summing.}
		\label{fig:flexibleIffCartRScolouring:paths}
	\end{figure}
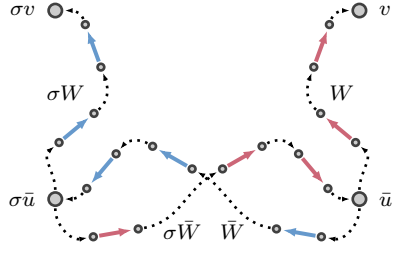

	Analogously we get
		$p_\gold(\sigma v) = p_\gold(\sigma \bar{u}) + \tau p_\gold(v)$.
	Traversing the path $\sigma \bar{W}$ from $\sigma\bar{u}$ to $\bar{u}$
	backwards from $\bar{u}$ to $\sigma\bar{u}$ yields
	\begin{align}
		\label{eq:pbluesigmabaru}
		p_\blue(\sigma \bar{u})
				&= \hspace{-0.5cm}\sum_{\substack{\oriented{\sigma u_1}{\sigma u_2} \in \sigma \bar{W}\\ \delta(\sigma u_1\sigma u_2)=\blue}}\hspace{-0.5cm} -(p(\sigma u_2)-p(\sigma u_1))
				= \hspace{-0.3cm}\sum_{\substack{\oriented{u_1}{u_2} \in \bar{W}\\ \delta(u_1u_2)=\red}}\hspace{-0.3cm} -\tau(p(u_2)-p(u_1))
				= -\tau p_\red(\sigma \bar{u})\,.
	\end{align}
	Similarly, $p_\gold(\sigma \bar{u}) = -\tau p_\gold(\sigma \bar{u})$.
	Let
	\begin{equation*}
		a(v) := p_\blue(v) - \frac{1}{2}p_\blue(\sigma \bar{u}) \qquad \text{ and } \qquad 
		z(v) := p_\gold(v) - \frac{1}{2}p_\gold(\sigma \bar{u})\,.
	\end{equation*}
	Notice that if $\bar{u}$ is invariant, the second terms in $a(v)$ and $z(v)$ are zero.
	Now we get
	\begin{align*}
		a(\sigma v) &= p_\blue(\sigma v) - \frac{1}{2}p_\blue(\sigma \bar{u})
				\eqwithreference{\eqref{eq:pbluesigmav}} p_\blue(\sigma \bar{u}) + \tau p_\red(v)  - \frac{1}{2}p_\blue(\sigma \bar{u}) \\
				&= \frac{1}{2}p_\blue(\sigma \bar{u}) + \tau p_\red(v)
				\eqwithreference{\eqref{eq:pbluesigmabaru}} -\frac{1}{2}\tau p_\red(\sigma \bar{u}) + \tau p_\red(v)
				= \tau\left( p_\red(v)-\frac{1}{2} p_\red(\sigma \bar{u})\right)\,.
	\end{align*}
	Similarly, $z(\sigma v)=\tau z(v)$.
	We define a flex
	\begin{equation*}
		p_t(v) = \rott a(v) + \rotmt\tau a(\sigma v) + z(v)\,.
	\end{equation*}
	This is the same construction as in \Cref{thm:noAlmostRBcycle2flex} which is clear
	once we show that if $u$ and $v$ are connected by a gold-red, resp.\ red-blue,
	path $(u=u_1,\ldots,u_k=v)$,
	then $a(u)=a(v)$, resp.\ $z(u)=z(v)$.
	This follows from the fact that $p_\blue(u_i)=p_\blue(u_{i+1})$ if $u_iu_{i+1}$ is a gold or red edge
	(by walk-independence, we can choose a walk $W$ from $\bar{u_i}$ to $u_{i+1}$ that passes $u_i$ as the second last vertex).
	Analogously $p_\gold(u_i)=p_\gold(u_{i+1})$ for edges $u_iu_{i+1}$ in a red-blue path.
	Now the $a$ and $z$ are the same as in \Cref{thm:noAlmostRBcycle2flex}.
	Therefore, $p_t$ is a non-trivial reflection-symmetric flex.
	We only need to show that it starts at $p$.
	Let $W$ and $\bar{W}$ be walks from $\bar{u}$ to $v$ and $\sigma\bar{u}$ respectively. Then
	\begin{align*}
		p_0(v) &= a(v) + \tau a(\sigma v) + z(v) \\
			&= p_\blue(v) - \frac{1}{2}p_\blue(\sigma \bar{u})
				 + p_\red(v)-\frac{1}{2} p_\red(\sigma \bar{u})
				 + p_\gold(v) - \frac{1}{2}p_\gold(\sigma \bar{u})\\
			&= \sum_{\oriented{u_1}{u_2} \in W} (p(u_2)-p(u_1))
				-\frac{1}{2} \sum_{\oriented{u_1}{u_2} \in \bar{W}} (p(u_2)-p(u_1))\\
			&= p(v) - p(\bar{u})  -\frac{1}{2}(p(\sigma \bar{u}) - p(\bar{u}))
			= p(v) - \frac{1}{2}(\tau p(\bar{u}) + p(\bar{u}))
			= p(v)\,,
	\end{align*}
	where the last equality follows from the assumption that $p(\bar{u})$ is on the $x$-axis.
\end{proof}

In~\cite{Bracing}, so-called (braced) P-frameworks are defined.
They consist of parallelograms with some added diagonals (braces)
and are walk-independent by \cite[Lemma~1.17]{TPframe}.
The only triangles that occur come from the bracing,
while for TP-frameworks defined in~\cite{TPframe},
there might be also triangles that are not diagonals of a 4-cycle, see \Cref{fig:Penrose}.
As both P-frameworks and TP-frameworks are walk-independent, we have the following corollary.
\begin{corollary}
	If $(G,p)$ is a TP-framework or braced P-framework that is reflection-symmetric,
	then the following are equivalent:
	\begin{enumerate}
	  \item $(G,p)$ is reflection-symmetric flexible,
	  \item $G$ has a Cartesian RS-colouring,
	  \item $G$ has a non-invariant angle-preserving class.
	\end{enumerate}
\end{corollary}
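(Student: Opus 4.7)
The plan is essentially to invoke \Cref{thm:flexibleIffCartRScolouring} after verifying that the two classes of frameworks fall under its hypothesis of being walk-independent. First I would recall that braced P-frameworks are walk-independent by \cite[Lemma~1.17]{TPframe}, as explicitly noted in the paragraph preceding the corollary; and TP-frameworks are walk-independent by definition (or by the same reference in the case of \cite{TPframe}). So in both cases the hypothesis of \Cref{thm:flexibleIffCartRScolouring} is satisfied.

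Once walk-independence is in hand, the three conditions in the corollary are literally the three conditions in \Cref{thm:flexibleIffCartRScolouring}, so the equivalences \cref{it:RSflexible}$\Leftrightarrow$\cref{it:cartRS}$\Leftrightarrow$\cref{it:noninvAPC} transfer verbatim. Thus the proof reduces to a single sentence citing walk-independence together with the main theorem.

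The only point worth being careful about is making sure that both cases really are covered by the walk-independence lemma: braced P-frameworks come from parallelograms with diagonal braces (the braces introducing 3-cycles but not violating the parallelogram condition), while TP-frameworks also allow ``standalone'' triangles; in both situations the reference \cite[Lemma~1.17]{TPframe} (or the appropriate result of \cite{Bracing,TPframe}) yields walk-independence. There is no actual obstacle — this is a genuine one-line corollary whose whole content is the applicability of the previous theorem. I would write:

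\begin{proof}
By \cite[Lemma~1.17]{TPframe}, every TP-framework and every braced P-framework is walk-independent. The claim is therefore an immediate instance of \Cref{thm:flexibleIffCartRScolouring}.
\end{proof}
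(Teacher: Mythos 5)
Your proposal is correct and matches the paper's own (implicit) argument exactly: the paper states just before the corollary that both braced P-frameworks and TP-frameworks are walk-independent (citing \cite[Lemma~1.17]{TPframe}) and then derives the corollary as an immediate instance of \Cref{thm:flexibleIffCartRScolouring}. Nothing further is needed.
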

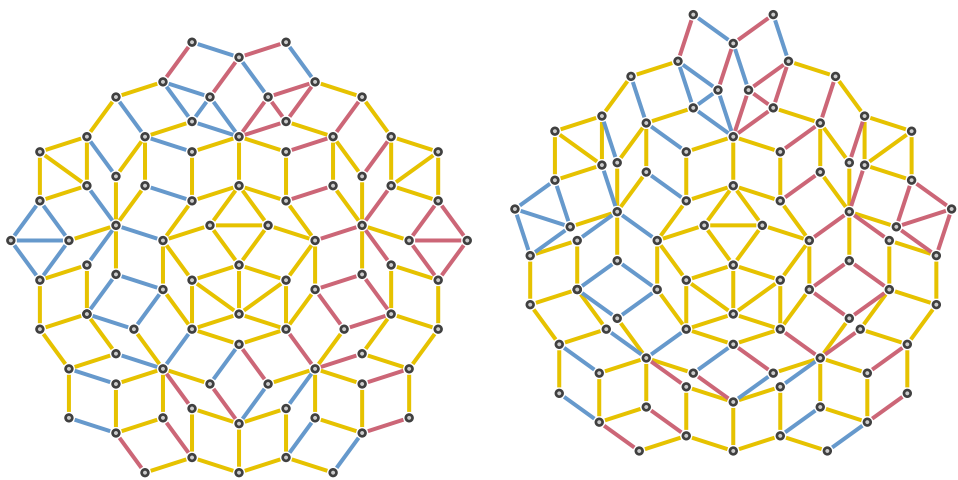
\begin{figure}[ht]
	\centering
	\begin{tikzpicture}[scale=0.65]
		\foreach \w [count=\i] in {18,36}
		{
		\begin{scope}[xshift=10*\i cm]
			\coordinate (o) at (0,0);
			\node[fvertex] (70) at (0,0) {};
			\node[fvertex] (23) at (-90:1) {};
			\node[fvertex] (20) at (-18:1) {};
			\node[fvertex] (24) at (72-18:1) {};
			\node[fvertex] (22) at (2*72-18:1) {};
			\node[fvertex] (21) at (3*72-18:1) {};
			\node[fvertex] (78) at ($(22)+(54:1)$) {};
			\foreach \x [count=\i,remember=\xo (initially 78)] in {51,76,50,77,52,79,54,80,53}
			{
				\node[fvertex,rotate around={36*\i:(o)}] (\x) at (78) {};
			}
			\node[fvertex] (82) at ($(78)+(90:1)$) {};
			\node[fvertex] (11) at ($(51)+(90:1)$) {};
			\node[fvertex] (17) at ($(53)+(90:1)$) {};

			\foreach \x/\y in {38/82,75/17,19/53,85/80,15/54}
			{
				\node[fvertex] (\x) at ($(\y)+(\w:1)$) {};
			}
			\foreach \x/\y in {14/54,84/79,18/52}
			{
				\node[fvertex] (\x) at ($(\y)+(\w-72:1)$) {};
			}
			\node[fvertex] (28) at ($(82)+(\w+36:1)$) {};
			\node[fvertex] (58) at ($(28)+(\w:1)$) {};
			\node[fvertex] (44) at ($(75)+(85)-(19)$) {};
			\node[fvertex] (42) at ($(85)+(14)-(54)$) {};
			\node[fvertex] (73) at ($(15)+(14)-(54)$) {};
			\node[fvertex] (41) at ($(73)+(84)-(14)$) {};
			\foreach \x/\y in {49/75,64/44,34/85}
			{
				\node[fvertex] (\x) at ($(\y)+(58)-(38)$) {};
			}
			\node[fvertex] (9) at ($(64)+(20)-(70)$) {};
			\node[fvertex] (69) at ($(9)+(-90:1)$) {};
			\node[fvertex] (32) at ($(69)+(85)-(34)$) {};
			\node[fvertex] (62) at ($(42)+(32)-(85)$) {};
			\node[fvertex] (7) at ($(62)+(69)-(32)$) {};
			\node[fvertex] (47) at ($(62)+(-90:1)$) {};
			\node[fvertex] (61) at ($(47)+(41)-(73)$) {};
			\node[fvertex] (6) at ($(61)+(-90:1)$) {};
			\node[fvertex] (31) at ($(61)+(84)-(41)$) {};
			\node[fvertex] (68) at ($(31)+(-90:1)$) {};
			\node[fvertex] (33) at ($(84)+(-90:1)$) {};
			\foreach \x/\y in {36/82,72/11,13/51,83/76,12/50}
			{
				\node[fvertex] (\x) at ($(\y)+(180-\w:1)$) {};
			}
			\foreach \x/\y in {10/50,81/77,16/52,74/18,43/84,63/33,8/68,71/12,39/83}
			{
				\node[fvertex] (\x) at ($(\y)+(180+72-\w:1)$) {};
			}
			\node[fvertex] (26) at ($(82)+(180-\w-36:1)$) {};
			\node[fvertex] (66) at ($(26)+(28)-(82)$) {};
			\node[fvertex] (3) at ($(66)+(58)-(28)$) {};
			\node[fvertex] (56) at ($(26)+(36)-(82)$) {};
			\node[fvertex] (0) at ($(66)+(36)-(82)$) {};
			\node[fvertex] (40) at ($(83)+(90:1)$) {};
			\node[fvertex] (29) at ($(83)+(198:1)$) {};
			\foreach \x/\y in {46/72,60/40,30/83,67/29}
			{
				\node[fvertex] (\x) at ($(\y)+(56)-(36)$) {};
			}
			\node[fvertex] (5) at ($(60)+(198:1)$) {};
			\node[fvertex] (4) at ($(67)+(71)-(12)$) {};
			\node[fvertex] (59) at ($(29)+(71)-(12)$) {};
			\node[fvertex] (45) at ($(59)+(-90:1)$) {};
			\node[fvertex] (71) at ($(39)+(-90:1)$) {};
			\node[fvertex] (55) at ($(45)+(81)-(10)$) {};
			\node[fvertex] (35) at ($(71)+(81)-(10)$) {};
			\node[fvertex] (1) at ($(55)+(-90:1)$) {};
			\node[fvertex] (25) at ($(81)+(55)-(35)$) {};
			\node[fvertex] (65) at ($(25)+(-90:1)$) {};
			\node[fvertex] (27) at ($(81)+(-90:1)$) {};
			\node[fvertex] (37) at ($(81)+(84)-(79)$) {};
			\node[fvertex] (48) at ($(74)+(-90:1)$) {};
			\node[fvertex] (57) at ($(37)+(-90:1)$) {};
			\node[fvertex] (2) at ($(65)+(57)-(27)$) {};

			\foreach \a/\b in { 48/63,74/43,18/84,52/79,77/23,48/57,74/37,16/81,52/77,79/23,20/23,49/58,75/38,17/82,53/78,24/80,20/70,21/23,62/47,73/42,85/15,80/54,20/79,70/23,59/45,71/39,83/12,50/76,77/21,21/70,76/22,51/78,56/46,72/36,82/11,60/46,40/72,83/13,51/76,78/22,61/47,73/41,84/14,54/79,80/20,24/70,64/49,75/44,19/85,80/53,24/78,24/22,70/22,76/21,50/77,45/55,35/71,81/10,61/6,68/31,33/84,43/63,48/74,57/37,81/27,65/25,1/55,5/30,9/34,8/63,33/68,84/31,41/61,73/47,42/62,32/85,34/69,64/9,9/69,64/34,44/85,75/19,17/53,82/78,51/11,72/13,40/83,60/30,67/5,60/5,67/30,83/29,59/39,45/71,35/55,81/25,65/27,57/2}
			{
				\draw[edge,colG] (\a)--(\b);
			}
			\foreach \a/\b in {26/36,58/3,66/28,4/29,8/68,33/63,43/84,18/74,16/52,81/77,10/50,12/71,83/39,59/29,67/4,59/4,67/29,83/30,40/60,72/46,65/1,25/55,81/35,10/71,50/12,83/76,51/13,72/11,82/36,56/36,26/82,56/26,0/66}
			{
				\draw[edge,colB] (\a)--(\b);
			}
			\foreach \a/\b in{28/38,32/7,62/7,69/7,65/2,57/27,81/37,16/74,18/52,84/79,54/14,73/15,42/85,32/62,32/69,34/85,64/44,49/75,58/38,68/6,61/31,41/84,73/14,54/15,80/85,19/53,17/75,82/38,66/3,58/28,82/28,26/66,0/56}
			{
				\draw[edge,colR] (\a)--(\b);
			}
		\end{scope}
		}
	\end{tikzpicture}
	\caption{A Cartesian RS-colouring of the TP-framework/braced P-framework
	obtained from a piece of a Penrose tiling.
	The flex constructed in \Cref{thm:flexibleIffCartRScolouring} for this RS-colouring is indicated.}
	\label{fig:Penrose}
\end{figure}

\section{Conclusion}
\label{sec:conclusion}

We have shown that the existence of an RS-colouring is a necessary condition
for the existence of a reflection-symmetric flex.
While in the general or rotation-symmetric case,
the existence of a (rotation-symmetric) NAC-colouring is also a sufficient condition for the existence of a (rotation-symmetric) flex,
the situation in the reflection-symmetric case is significantly more difficult.
The reason is that for a NAC-colouring $\delta$, there exists an algebraic motion
such that its active NAC-colourings are exactly $\delta$ and its conjugate.
On the other hand, the graph in \Cref{fig:RScolourings} has no RS-colouring without an almost red-blue cycle.
Hence, each of its motions (for instance the one constructed using \Cref{thm:twoRS2flex}) has at least two non-conjugated active RS-colourings.
Even worse, the following is true.
\begin{lemma}
	For every $\ell\in\NN$, there is a reflection-symmetric graph $G$
	such that every reflection-symmetric motion of $G$ has at least $\ell$ active RS-colourings.
\end{lemma}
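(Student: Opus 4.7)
The plan is to construct $G$ by identifying the invariant edges of $k := \lceil \ell/2 \rceil$ copies $H_1,\ldots,H_k$ of the graph $H$ from \Cref{fig:RScolourings}, producing a single reflection-symmetric graph. Recall that $H$ has two non-conjugated RS-colourings $\delta_1, \delta_2$ which are certificates of each other, and consequently both must be active in every reflection-symmetric motion of $H$ by \Cref{lem:activeIsRS}. The construction of a reflection-symmetric flex of $G$ itself follows by applying \Cref{thm:twoRS2flex} to each copy independently, choosing flexes that all fix the shared invariant edge; a one-parameter slice of the resulting $k$-parameter family yields a reflection-symmetric algebraic motion.

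First, I would observe that any reflection-symmetric motion $\motion$ of $G$ restricts to a reflection-symmetric motion $\motion_i$ on each copy $H_i$. This follows because the copies share only the invariant edge, whose endpoints lie on the $y$-axis in every reflection-symmetric realisation, so this edge stays rigidly placed under $\motion$ and each copy can be considered separately. By the analysis of $H$, each restriction $\motion_i$ then has at least two non-conjugated active RS-colourings on the edges of $H_i$.

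Next, for each copy $H_i$ and each $j\in\{1,2\}$, I would construct a distinct active RS-colouring $\delta^{(i,j)}$ of $G$ whose restriction to $H_i$ equals $\delta_j$. To do so, I would take a valuation $\nu$ of $\CC(\motion_i)$ witnessing $\delta_j$ and lift it to a valuation of $\CC(\motion)$ via the field map induced by the projection $\motion \to \motion_i$. Choosing a threshold $\alpha$ in \Cref{lem:valuationToPseudoRS} between the maximum absolute valuation attained on $H_i$'s edges and the (suitably smaller) magnitudes on the other copies' edges yields an active RS-colouring of $G$ that restricts to $\delta_j$ on $H_i$ and colours every other copy entirely gold. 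The $2k$ RS-colourings arising this way are pairwise distinct (they differ on the respective copies), producing at least $\ell$ active RS-colourings per motion.

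The main obstacle will be arranging the motion and the valuations so that the per-copy valuations can be cleanly separated by a threshold: the lifted valuation must assign substantially larger magnitudes to the chosen copy's edges than to any other copy's edges. This can be achieved by constructing $\motion$ with independent flex speeds on the different copies (through different analytic parameterisations of each copy's flex, as furnished by \Cref{thm:twoRS2flex}), and then exploiting the freedom in $\CC(\motion)$ to select valuations supported on the chosen copy's part of the function field. With this separation in place, the lifting argument is straightforward, yielding the required lower bound of $\ell$ active RS-colourings for every reflection-symmetric motion of~$G$.
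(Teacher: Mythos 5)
Your construction has a genuine gap at the quantifier ``every reflection-symmetric motion''. Gluing $k$ copies of the graph $H$ from \Cref{fig:RScolourings} along the invariant edge does not couple the copies: the shared invariant edge is of the form $v(\sigma v)$, so in a reflection-symmetric realisation its midpoint lies on the mirror line and its length is fixed, meaning that along any reflection-symmetric motion it can only translate vertically; each copy can therefore deform around it completely independently of the others. In particular your $G$ admits a reflection-symmetric algebraic motion $\motion$ that flexes $H_1$ while every other copy moves as a rigid body. Along such a motion, $\Wfun{u}{v}$ is a constant function of $\CC(\motion)$ for every edge $uv$ of a rigid copy, hence has valuation $0$ under every valuation, so every active RS-colouring of $\motion$ colours all of $H_2,\ldots,H_k$ gold and is determined by its restriction to $H_1$. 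The number of active RS-colourings of this particular $\motion$ is therefore bounded by the (fixed, finite) number for a single copy of $H$ and does not grow with $k$, so the lemma fails for your $G$ once $\ell$ exceeds that bound. There is also a secondary problem in the lifting step: an extension of a valuation from $\CC(\motion_i)$ to $\CC(\motion)$ exists, but you have no freedom to prescribe that it takes ``suitably smaller'' values on the other copies' edges, so the threshold separation you rely on cannot simply be arranged; and \Cref{lem:activeIsRS} only supplies certificates for almost red-blue cycles of colourings that are already active with respect to the given motion, which is exactly what the decoupled motion destroys.

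The paper's construction is designed precisely to prevent this decoupling: it uses $k$ invariant $5$-cycles through a single common invariant edge $l_0r_0$ whose remaining vertices $m_1,\ldots,m_k$ form a clique, so the $5$-cycles cannot flex independently. Every reflection-symmetric motion then forces an active colouring in which all $k$ of these $5$-cycles are simultaneously almost red-blue, and since each certificate colouring can account for at most two of the cycles, \Cref{lem:activeIsRS} yields at least $\lceil k/2\rceil$ further active RS-colourings. To salvage your approach you would need an analogous mechanism that rigidly ties the flexes of the different copies to one another, rather than joining them along a single edge.
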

\begin{proof}
	For $k\in\NN^+$, let $G_k$ be the reflection-symmetric graph with the vertex set
	\begin{equation*}
		\{l_0, \ldots, l_k\} \cup \{r_0, \ldots, r_k\} \cup \{m_1, \ldots, m_k\}\,,
	\end{equation*}
	the set of edges
	\begin{equation*}
		\{l_0r_0\} \cup \{l_0 l_i \colon i\in\widehat{k}\}
				\cup \{l_i m_i \colon i\in\widehat{k}\} 
				\cup \{r_0 r_i \colon i\in\widehat{k}\}
				\cup \{r_i m_i \colon i\in\widehat{k}\}
				\cup \{m_im_j \colon i,j\in\widehat{k}\},
	\end{equation*}
	where $\widehat{k} = \{1, \ldots k\}$,
	and the symmetry $\sigma l_i=r_i$ and $\sigma m_i=m_i$ for all $i$.
	For $G_k$, there are three types of RS-colourings, see \Cref{fig:kActiveRS}.
	For any reflection-symmetric motion of the graph, the angle at $r_1$ has to change.
	Hence, at least one RS-colouring of the third type must be active.
	But since each of the $k$ invariant almost red-blue 5-cycles
	has an active certificate by \Cref{lem:activeIsRS},
	at least $\lceil\frac{k}{2}\rceil$ RS-colourings of the first or second type must be active as well.
\end{proof}
\begin{figure}[ht]
	\centering
	\begin{tikzpicture}
		\draw[sym] (0,-1)--(0,2.25);
		\node[gvertex,label={[labelsty, below right]$r_0\vphantom{l}$}] (sr) at (18-72:1) {};
		\node[gvertex,label={[labelsty, below left]$l_0$}] (sl) at (18-2*72:1) {};
		\node[gvertex] (2r1) at (18:1) {};
		\node[gvertex] (t1) at (90:1) {};
		\node[gvertex] (2l1) at (90+72:1) {};
		\node[gvertex] (2r2) at (18:1.5) {};
		\node[gvertex] (t2) at (90:1.5) {};
		\node[gvertex] (2l2) at (90+72:1.5) {};
		\node[gvertex,label={[labelsty]0:$r_1$}] (2r3) at (18:2) {};
		\node[gvertex,label={[labelsty, right]$m_1$}] (t3) at (90:2) {};
		\node[gvertex,label={[labelsty]180:$l_1$}] (2l3) at (90+72:2) {};

		\draw[edge,colG] (sr)--(sl) (t1)--(t2) (t2)--(t3);
		\draw[edge,colR] (sr)--(2r1) (2r1)--(t1);
		\draw[edge,colB] (sl)--(2l1) (2l1)--(t1);
		\draw[edge,colG] (sr)--(2r2) (2r2)--(t2);
		\draw[edge,colG] (sl)--(2l2) (2l2)--(t2);
		\draw[edge,colG] (sr)--(2r3) (2r3)--(t3);
		\draw[edge,colG] (sl)--(2l3) (2l3)--(t3);
	\end{tikzpicture}
	\qquad
	\begin{tikzpicture}
		\draw[sym] (0,-1)--(0,2.25);
		\node[gvertex] (sr) at (18-72:1) {};
		\node[gvertex] (sl) at (18-2*72:1) {};
		\node[gvertex] (2r1) at (18:1) {};
		\node[gvertex] (t1) at (90:1) {};
		\node[gvertex] (2l1) at (90+72:1) {};
		\node[gvertex] (2r2) at (18:1.5) {};
		\node[gvertex] (t2) at (90:1.5) {};
		\node[gvertex] (2l2) at (90+72:1.5) {};
		\node[gvertex] (2r3) at (18:2) {};
		\node[gvertex] (t3) at (90:2) {};
		\node[gvertex] (2l3) at (90+72:2) {};

		\draw[edge,colG] (sr)--(sl) (t1)--(t2) (t2)--(t3);
		\draw[edge,colR] (sr)--(2r1) (2r1)--(t1);
		\draw[edge,colB] (sl)--(2l1) (2l1)--(t1);
		\draw[edge,colB] (sr)--(2r2) (2r2)--(t2);
		\draw[edge,colR] (sl)--(2l2) (2l2)--(t2);
		\draw[edge,colG] (sr)--(2r3) (2r3)--(t3);
		\draw[edge,colG] (sl)--(2l3) (2l3)--(t3);
	\end{tikzpicture}
	\qquad
	\begin{tikzpicture}
		\draw[sym] (0,-1)--(0,2.25);
		\node[gvertex] (sr) at (18-72:1) {};
		\node[gvertex] (sl) at (18-2*72:1) {};
		\node[gvertex] (2r1) at (18:1) {};
		\node[gvertex] (t1) at (90:1) {};
		\node[gvertex] (2l1) at (90+72:1) {};
		\node[gvertex] (2r2) at (18:1.5) {};
		\node[gvertex] (t2) at (90:1.5) {};
		\node[gvertex] (2l2) at (90+72:1.5) {};
		\node[gvertex] (2r3) at (18:2) {};
		\node[gvertex] (t3) at (90:2) {};
		\node[gvertex] (2l3) at (90+72:2) {};

		\draw[edge,colG] (sr)--(sl) (t1)--(t2) (t2)--(t3);
		\draw[edge,colB] (sr)--(2r1) (2l1)--(t1);
		\draw[edge,colR] (sl)--(2l1) (2r1)--(t1);
		\draw[edge,colR] (sr)--(2r2) (2l2)--(t2);
		\draw[edge,colB] (sl)--(2l2) (2r2)--(t2);
		\draw[edge,colB] (sr)--(2r3) (2l3)--(t3);
		\draw[edge,colR] (sl)--(2l3) (2r3)--(t3);
	\end{tikzpicture}
	\caption{Graphs $G_k$ (here $G_3$ is shown) have three types of RS-colourings:
	one of the $k$ invariant 5-cycles is almost red-blue with incident edges
	having the same colour (left), two of the $k$ invariant 5-cycles coloured as in the first type (middle)
	and each of the 5-cycles coloured with red and blue interchanging (right).}
	\label{fig:kActiveRS}
\end{figure}
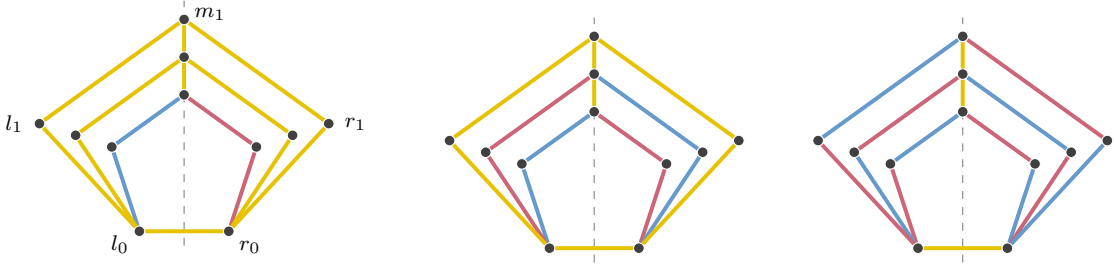
 
The class of graphs $G_k$ from the lemma above indicates that a construction of a flex for a graph with $G$ as a subgraph
should employ at least $\lceil\frac{k}{2}\rceil$ RS-colourings.

Interestingly,
a similar situation to the reflection case is observed for periodic symmetry~\cite{Dperiodic2019}.
It is shown there that certain periodic NAC-colourings called flexible 2-lattice NBAC-colour\-ings are necessary for flexibility, but the author could not prove they were sufficient for~it.

\addcontentsline{toc}{section}{Acknowledgments}
\section*{Acknowledgements}
S.\,D.\ was supported by the Heilbronn Institute for Mathematical Research.
G.\,G.\ was partially supported by the Austrian Science Fund (FWF): 10.55776/I6233.
J.\,L.\ was funded by the Czech Science Foundation (GAČR) project 22-04381L.

This research was funded in whole, or in part, by the Austrian Science Fund (FWF) 10.55776/I6233. For the purpose of open access, the authors have applied a CC BY public copyright licence to any author accepted manuscript version arising from this submission.

\bibliography{symmetric}

\end{document}